\documentclass[letterpaper,11pt,oneside,reqno]{article}

\usepackage[pdftex,backref=page,colorlinks=true,linkcolor=blue,citecolor=red]{hyperref}
\renewcommand*{\backref}[1]{}
\renewcommand*{\backrefalt}[4]{%
  \ifcase #1%
  \or {\,$\uparrow$#2}%
  \else {\,$\uparrow$#2}%
  \fi}

\usepackage{amsmath,amssymb,amsthm,amsfonts,mathtools}
\usepackage{bm}
\usepackage{graphicx,color}
\usepackage{upgreek}
\usepackage[mathscr]{euscript}

\allowdisplaybreaks
\numberwithin{equation}{section}

\usepackage{tikz}
\usetikzlibrary{shapes,arrows,positioning,decorations.markings}

\usepackage{array}
\usepackage{adjustbox}
\usepackage{cleveref}
\usepackage{aliascnt}
\usepackage[shortlabels]{enumitem}
\setlist[enumerate]{leftmargin=16pt}

\usepackage[DIV=12]{typearea}

\newcommand{\ssp}{\hspace{1pt}}
\DeclareMathOperator{\Det}{det}

\newtheorem{proposition}{Proposition}[section]
\newaliascnt{lemma}{proposition}
\newtheorem{lemma}[lemma]{Lemma}
\aliascntresetthe{lemma}
\crefname{lemma}{Lemma}{Lemmas}\Crefname{lemma}{Lemma}{Lemmas}
\newaliascnt{corollary}{proposition}
\newtheorem{corollary}[corollary]{Corollary}
\aliascntresetthe{corollary}
\crefname{corollary}{Corollary}{Corollaries}\Crefname{corollary}{Corollary}{Corollaries}
\newaliascnt{theorem}{proposition}
\newtheorem{theorem}[theorem]{Theorem}
\aliascntresetthe{theorem}
\crefname{theorem}{Theorem}{Theorems}\Crefname{theorem}{Theorem}{Theorems}

\theoremstyle{definition}
\newaliascnt{conjecture}{proposition}
\newtheorem{conjecture}[conjecture]{Conjecture}
\aliascntresetthe{conjecture}
\crefname{conjecture}{Conjecture}{Conjectures}\Crefname{conjecture}{Conjecture}{Conjectures}
\newaliascnt{definition}{proposition}
\newtheorem{definition}[definition]{Definition}
\aliascntresetthe{definition}
\crefname{definition}{Definition}{Definitions}\Crefname{definition}{Definition}{Definitions}
\newaliascnt{remark}{proposition}
\newtheorem{remark}[remark]{Remark}
\aliascntresetthe{remark}
\crefname{remark}{Remark}{Remarks}\Crefname{remark}{Remark}{Remarks}
\newaliascnt{example}{proposition}

\aliascntresetthe{example}
\crefname{example}{Example}{Examples}\Crefname{example}{Example}{Examples}
\crefname{appendix}{Appendix}{Appendices}\Crefname{appendix}{Appendix}{Appendices}

\begin{document}
\title{A Borodin--Okounkov--Geronimo--Case identity\\for tilted Toeplitz minors}

\author{Leonid Petrov}

\date{}

\maketitle

\begin{abstract}
	We prove a Fredholm determinantal identity for the tilted Toeplitz minor
	\[
	D_{N}^{\xi,\theta}(\varphi)\coloneqq
	\det\bigl[(\theta_{i}\xi_{j}\varphi)_{i-j}\bigr]_{i,j=1}^{N},
	\]
	generalizing the Borodin--Okounkov--Geronimo--Case (BOGC) identity to
	oblique splittings of the Hardy space.
	The tilts $\xi_{j},\theta_{i}$ enter only through an oblique projection
	that multiplies the trace-class kernel $K$ inside the Fredholm
	determinant; the BOGC
	operator $A=I-K$ constructed from $\varphi$ is unchanged.

	Baik--Liao--Liu \cite{BaikLiaoLiu2026} and Liu--Tripathi
	\cite{LiuTripathi2026} have recently shown that the same tilted Toeplitz
	minor admits a contour Fredholm-determinantal representation, in
	connection with the periodic Totally Asymmetric Simple Exclusion Process
	(TASEP). In the periodic TASEP application of Baik--Liao--Liu, the
	formula plays an important role in identifying the periodic KPZ fixed point
	with general initial data.
	Our formula is a companion to their Fredholm determinant and readily reduces to the original BOGC identity.

	The one-sided tilted Toeplitz minor 
	(that is, when all $\theta_i=1$) admits a bialternant form
	recovering Schur and Grothendieck polynomials as special cases.
	A Cauchy--Binet expansion realizes $D_{N}^{\xi,\theta}$ as a
	restricted sum over partitions of products of
	Jacobi--Trudi type determinants, generalizing Gessel's theorem.
	In the pure-shift setting this specializes to a skew Schur expansion.
	Finally, for finite Laurent exponential symbols, we record explicit
	resolvent-block flow identities and formulate the associated
	finite-dimensional closure problem.  
	We also illustrate a possible asymptotic application leading to 
	finite-rank perturbations of the Airy kernel.
\end{abstract}


\section{Introduction}
\label{sec:intro}

\Cref{sub:intro_BOGC} recalls the Borodin--Okounkov--Geronimo--Case (BOGC)
identity \cite{GeronimoCase1979}, \cite{BorodinOkounkov2000},
\cite{BasorWidom2000}, \cite{Bottcher2001}.
\Cref{sub:intro_tilted} introduces the tilted Toeplitz minors and the Hardy
space framework.
The main results are stated in \Cref{sub:intro_main_results}; the relation to
\cite{BaikLiaoLiu2026}, \cite{LiuTripathi2026}, and other works is discussed in
\Cref{sub:intro_related}.

\subsection{Background}
\label{sub:intro_BOGC}

Let $\varphi\colon\mathbb T\to\mathbb C\setminus\{0\}$ be a function on the unit
circle (called the \emph{symbol}).
The $N\times N$ Toeplitz determinant
\begin{equation*}
	D_{N}(\varphi)\coloneqq \det\bigl[\widehat\varphi(i-j)\bigr]_{i,j=0}^{N-1},
	\qquad \widehat\varphi(k)\coloneqq\int_{\mathbb
	T}\varphi(z)\,z^{-k}\,\frac{dz}{2\pi iz},
\end{equation*}
is built from the Fourier coefficients of $\varphi$.
Such determinants have been studied since Szeg\H{o} \cite{Szego1915},
\cite{Szego1952}, mainly for the role of their asymptotic behavior as
$N\to\infty$ in analysis, probability, and mathematical physics, starting with
Onsager's \cite{Onsager1944Ising} solution of the two-dimensional Ising model.

For symbols of zero winding number, the strong Szeg\H{o} limit theorem
\cite{Szego1952} gives the leading-order asymptotic
$D_{N}(\varphi)\sim G(\varphi)^{N}\,Z$ as $N\to\infty$, with $G(\varphi)$ and
$Z$ explicit scalars built from the Fourier coefficients of $\log\varphi$.
The Borodin--Okounkov--Geronimo--Case identity (BOGC, for short) sharpens this
into an exact formula valid at every $N$:
\begin{equation}
	\label{eq:intro_BOGC} D_{N}(\varphi) = G(\varphi)^{N}\cdot Z\cdot
	\det\bigl[\,\delta_{ij}-K_{ij}\,\bigr]_{i,j\ge N}.
\end{equation}
The right-hand side is the Fredholm determinant of the infinite matrix $I-K$
restricted to rows and columns $i,j\ge N$.
The operator $K$ is an $N$-independent product of two Hankel matrices,
\begin{equation*}
	K_{ij}=\bigl(\mathcal H(b)\,\mathcal H(\widetilde c)\bigr)_{ij}, \qquad
	\mathcal H(\psi)_{ij}\coloneqq \widehat\psi(i+j+1),
\end{equation*}
built from the Wiener--Hopf factorization of the symbol into analytic factors
$\varphi=\varphi_{-}\,G(\varphi)\,\varphi_{+}$ (with $\varphi_{-}$ extending
into $|z|>1$ with $\varphi_{-}(\infty)=1$, and $\varphi_{+}$ extending into
$|z|<1$ with $\varphi_{+}(0)=1$) via $b=\varphi_{-}/\varphi_{+}$, $c=1/b$, and
$\widetilde c(z)=c(z^{-1})$.

Identity \eqref{eq:intro_BOGC} originated in Geronimo--Case
\cite{GeronimoCase1979} and was rediscovered in the Toeplitz form by
Borodin--Okounkov \cite{BorodinOkounkov2000}.
It was given two further proofs (including a block-Toeplitz extension) by
Basor--Widom \cite{BasorWidom2000} and B\"ottcher \cite{Bottcher2001}.
The route most directly relevant here is another proof by B\"ottcher--Widom
\cite{BottcherWidom2006}, who recognized \eqref{eq:intro_BOGC} as the
trace-class infinite-dimensional incarnation of the classical \emph{Jacobi
complementary minor identity}: the minor of $A^{-1}$ on a subspace $U$ equals
$\det(A)^{-1}$ times the minor of $A$ on a complementary subspace $V$.
In this way, the BOGC identity corresponds to the special case of the most
natural orthogonal splitting of the Hardy space $H=\ell^{2}(\mathbb Z_{\ge 0})$
(the Hardy space $H^{2}(\mathbb D)$ of analytic functions on the unit disk) into
$U=\mathrm{span}\{e_{0},\dots,e_{N-1}\}$ and $V=\ell^{2}(\mathbb Z_{\ge N})$.
Our starting point is a straightforward extension of this Jacobi identity to
arbitrary (oblique) splittings $H=U\dotplus V$ into closed subspaces.

\subsection{Tilted Toeplitz minors}
\label{sub:intro_tilted}

We now introduce our main object, the tilted Toeplitz minor.
It depends on two collections of analytic functions $\xi_{1},\dots,\xi_{N}$
(extending into $|z|<1$) and $\theta_{1},\dots,\theta_{N}$ (extending into
$|z|>1$), and a symbol $\varphi$, as above.
The \emph{tilted Toeplitz minor} attached to this data is
\begin{equation}
	\label{eq:intro_tilted_def} D_{N}^{\xi,\theta}(\varphi) \coloneqq
	\det\bigl[(\theta_{i}\,\xi_{j}\,\varphi)_{i-j}\bigr]_{i,j=1}^{N}, \qquad
	(f)_{k}\coloneqq \widehat f(k),
\end{equation}
where $\widehat f(k)$ is the $k$-th Fourier coefficient of $f$ on $\mathbb T$.
Assemble $\xi_{j}$ and $\theta_{i}$ into operators $\Xi\colon H\to H$ and
$\Theta\colon H\to\mathbb C^{N}$ on the Hardy space
$H=\ell^{2}(\mathbb Z_{\ge 0})\simeq H^{2}(\mathbb D)$ (the convention of
\cite{BorodinOkounkov2000}, \cite{Bottcher2001}), and form the finite-dimensional pair
\begin{equation}
	\label{eq:intro_chart} R\coloneqq \Theta\,T(\varphi_{+}),\qquad C\coloneqq
	T(\varphi_{-})\,\Xi\,P_{N},
\end{equation}
where $T(\varphi_{\pm})$ are the Toeplitz operators of the Wiener--Hopf factors
which are triangular with respect to the standard basis of $H$, and $P_N$ is the
orthogonal projection onto $\mathrm{span}\{e_{0},\dots,e_{N-1}\}$. By a mild
abuse of notation, we also write $P_{N}$ for the canonical isomorphism
$P_{N}H\simeq\mathbb C^{N}$ where needed, so that maps such as
$T(\varphi_{-})\,\Xi\,P_{N}$ in \eqref{eq:intro_chart} are read as
$\mathbb C^{N}\to H$.
The pair $(R,C)$ generalizes the orthogonal pair $R=C=P_{N}$ of BOGC to an
oblique one; when the ``Gram'' matrix $\Gamma_{\xi,\theta}=RC$ is invertible
(equivalently, $\operatorname{Ran}(C)\cap\operatorname{Ker}(R)=\{0\}$), this
produces the (in general non-orthogonal) splitting
$H=\operatorname{Ran}(C)\dotplus\operatorname{Ker}(R)$.

In our generalizations, the symbol $\varphi$ fixes the operator $A=I-K$.
The tilt $(\xi,\theta)$ is recorded only in the row and column maps $(R,C)$
\eqref{eq:intro_chart}, which define the oblique projection
$\Pi_{V}=I-C(RC)^{-1}R$ onto the space $V=\operatorname{Ker}(R)$.

\subsection{Main results}
\label{sub:intro_main_results}

The main results of the note are the following identities.

\paragraph{Standing assumptions.}
Throughout the rest of the paper,
$\varphi\colon\mathbb T\to\mathbb C\setminus\{0\}$ is of zero winding number
with $\log\varphi\in C^{\alpha}(\mathbb T)$ for some $\alpha>\tfrac{1}{2}$, and
admits the canonical Wiener--Hopf factorization
$\varphi=\varphi_{-}\,G(\varphi)\,\varphi_{+}$ as in \Cref{sub:intro_BOGC}.
Under this assumption $\mathcal H(b)$ and $\mathcal H(\widetilde c)$ are
Hilbert--Schmidt, so $K=\mathcal H(b)\mathcal H(\widetilde c)$ is trace class
and the right-hand side of \eqref{eq:intro_BOGC} is well-defined.
The tilt functions $\xi_{1},\dots,\xi_{N}$ and $\theta_{1},\dots,\theta_{N}$ are
bounded analytic in $|z|<1$ and $|z|>1$ respectively, so that $\Xi$ and $\Theta$
are bounded on $H$.
Rank and transversality of the resulting chart are not automatic for degenerate
tilts; the needed Gram-invertibility hypotheses are stated alongside each
theorem below.

\paragraph{(1) Fredholm determinantal identity for tilted Toeplitz minors (\Cref{thm:tilted_chart}).}
For any tilts $(\xi_{j},\theta_{i})$ subject to the invertibility of the
$N\times N$ Gram matrix
$\Gamma_{\xi,\theta}=\Theta\,T(\varphi_{+})T(\varphi_{-})\,\Xi\,P_{N}$, we have
the identity for the determinant \eqref{eq:intro_tilted_def}:
\begin{equation}
	\label{eq:intro_tilted_main_identity} D_{N}^{\xi,\theta}(\varphi)
	=G(\varphi)^{N}\cdot Z\cdot\Det_{\mathbb C^{N}}\Gamma_{\xi,\theta}\cdot
	\Det_{\operatorname{Ker}(R)}\bigl(I_{\operatorname{Ker}(R)}-\Pi_{V}K|_{\operatorname{Ker}(R)}\bigr).
\end{equation}
The deformed kernel $\Pi_{V}K=K-C\Gamma_{\xi,\theta}^{-1}RK$ differs from $K$ by
a correction of rank at most $N$.
At fixed $N$ this realizes $\Pi_{V}K$ as a finite rank perturbation of the BOGC
operator $K$; in asymptotic regimes where $N$ varies, the identity separates
$D_{N}^{\xi,\theta}(\varphi)$ into the finite-dimensional Gram determinant
$\Det_{\mathbb C^N}\Gamma_{\xi,\theta}$ and the Fredholm determinant on the
oblique tail $\operatorname{Ker}(R)$.

When $RP_N\colon P_NH\to\mathbb C^N$ is also invertible, we identify this
oblique determinant with an ordinary Fredholm determinant on the fixed tail
$Q_NH=\overline{\operatorname{span}}(e_N,e_{N+1},\ldots)$ (see
\Cref{prop:fixed_tail_tilted}); for polynomial tilts of bounded degree, the
resulting kernel is a bounded rank perturbation of $Q_NKQ_N$.  This 
formulation may be useful for soft edge asymptotics, as we discuss in 
\Cref{sec:appendix_spiked}.

\paragraph{(2) Bialternant representation (\Cref{thm:bialternant_column}).}
When the negative part is exactly rank $N$,
$\varphi_{-}(z)=\prod_{l=1}^{N}(1-y_{l}/z)^{-1}$ (with pairwise distinct
$|y_{l}|<1$), and the row tilts are trivial, $\theta_{i}\equiv 1$, the column
tilted minor \eqref{eq:intro_tilted_def} has the bialternant form
\begin{equation*}
 D_{N}^{\xi,\mathbf 1}(\varphi)
	=\det\bigl[(\xi_{j}\varphi)_{i-j}\bigr]_{i,j=1}^{N}
	=G(\varphi)^{N}\cdot\frac{\det\bigl[y_{i}^{N-j}\,\xi_{N-j+1}(y_{i})\bigr]_{i,j=1}^{N}}{\Delta(Y)} \cdot\prod_{l=1}^{N}\varphi_{+}(y_{l}),
\end{equation*}
where $\Delta(Y)=\prod_{1\le i<j\le N}(y_{i}-y_{j})$ is the Vandermonde
determinant.
Particular choices of the analytic functions $\xi_{j}$ recover the Schur and
symmetric Grothendieck polynomials (see the end of
\Cref{sub:bialternant_rational_cauchy} for details).

\paragraph{(3) Two-sided Cauchy--Binet expansion (\Cref{thm:tilted_CB}).}
Under analyticity of $\xi_{j}\varphi_{+}$ and $\theta_{i}\varphi_{-}$ on
overlapping annuli, the tilted Toeplitz minor
\eqref{eq:intro_tilted_def}
admits the absolutely convergent expansion
\begin{equation*}
	D_{N}^{\xi,\theta}(\varphi)
	=G(\varphi)^{N}\sum_{\mu:\,\ell(\mu)\le N}
	\operatorname{JT}_{\mu}^{(N)}(\mathbf a^{\leftarrow})\,
	\operatorname{JT}_{\mu}^{(N)}(\mathbf b^{\leftarrow}),
\end{equation*}
where
$\operatorname{JT}_{\mu}^{(N)}(\mathbf c) \coloneqq \det\bigl[c^{(i)}_{\mu_{j}-j+i}\bigr]_{i,j=1}^{N}$ is a Jacobi--Trudi type determinant, and $\mathbf a^{\leftarrow}$, $\mathbf b^{\leftarrow}$ are the column and row Fourier coefficient sequences of $\xi_{j}\varphi_{+}$ and $\theta_{i}\varphi_{-}$, respectively, indexed in reverse.
This identity generalizes Gessel's theorem \cite{gessel1990symmetric} (recovered at
$\xi=\theta=1$).
The Jacobi--Trudi determinants become skew Schur polynomials
(\Cref{cor:CB_pure_shift_skew_schur}) in a particular case of the pure-shift
tilts $\xi_{j}(z)=z^{a_{j}}$, $\theta_{i}(z)=z^{-b_{i}}$.

\medskip
The main technical input for the proof of \eqref{eq:intro_tilted_main_identity}
is the Grassmannian Jacobi identity (\Cref{thm:GBO}), which extends the
classical finite-dimensional Jacobi complementary minor identity to arbitrary
closed splittings $H=U\dotplus V$ in the trace-class setting.
Its finite rank specialization leads to \eqref{eq:intro_tilted_main_identity}.
The bialternant and Cauchy--Binet identities are derived directly from the
tilted Toeplitz minor.

\subsection{Related work}
\label{sub:intro_related}

Let us discuss the relation of our Fredholm determinantal identity
\eqref{eq:intro_tilted_main_identity} to previous work (the history of the
original BOGC identity was already outlined in \Cref{sub:intro_BOGC}).

\paragraph{(i) Grassmannian language and Sato--Segal--Wilson tau functions.}
There is a well-known Grassmannian interpretation of certain Toeplitz
and block Toeplitz determinants as tau functions. With respect to the Hardy
polarization $L^{2}(\mathbb T)=H_{+}\oplus H_{-}$
(where $H_+$ is the same as our $H$, and $H_-$
is the $L^2$ closure of the span of $z^{-m}$, $m>0$), the Sato--Segal--Wilson
construction \cite{SegalWilson1985},
\cite[Definition~2.6]{CafassoWu2015}
associates to a big-cell point
$W=\operatorname{graph}(h_W)$,
$h_W\colon H_{+}\to H_{-}$, and to a positive loop $g$ the tau function
obtained as follows. Write
\begin{equation*}
	g^{-1}=\begin{pmatrix} d&0\\ b&a\end{pmatrix}
\end{equation*}
for the inverse of $g$. The block matrix is written with rows and
columns ordered as $H_{-}\oplus H_{+}$, so that
$a\colon H_{+}\to H_{+}$, $b\colon H_{-}\to H_{+}$, and
$d\colon H_{-}\to H_{-}$. Then
\begin{equation*}
	\tau_{\mathrm{SSW}}(g;W)
	=\Det_{H_{+}}\bigl(I+a^{-1}b\,h_{W}\bigr).
\end{equation*}
For the loop/Riemann--Hilbert data
considered in \cite{CafassoWu2015},
this tau function is identified with the large-size
Szeg\H{o}--Widom limit $D_{\infty}(g^{-1}\gamma)$ of block Toeplitz
determinants \cite[Theorem~3.4]{CafassoWu2015}. We refer to the latter paper
for further details.

Our use of Grassmannian language is different.
On the open set where $A$ is invertible, and hence $A^{-1}-I$ is trace class
in our setting, choose a unitary identification
$\jmath\colon H_{+}\xrightarrow{\sim}H_{-}$ and associate to $A$
the graph
\begin{equation*}
	W_A=\bigl\{h+\jmath(A^{-1}-I)h\colon h\in H_+\bigr\}.
\end{equation*}
It is a point in the big cell of the restricted Grassmannian associated with the
polarization $H_{+}\oplus H_{-}$, but it is not generally a point of the
SSW/KP Grassmannian, since the invariance condition $zW_A\subset W_A$ need not
hold.

The finite maps $R,C$ (discussed in
\Cref{sub:intro_tilted})
turn the point $W_A$ into a finite exterior matrix
coefficient. Let $c_j=C e_j$ be the columns of $C$, and let $r_i$ be the
coordinate row functionals of $R$. Extend $r_i$ to a functional on
$H_{+}\oplus H_{-}$ by
\begin{equation*}
	\widetilde r_i(u+v)=r_i\bigl(u+\jmath^{-1}v\bigr).
\end{equation*}
Then
\begin{equation*}
	\widetilde r_i\bigl(c_j+\jmath(A^{-1}-I)c_j\bigr)=r_i(A^{-1}c_j),
\end{equation*}
and hence
\begin{equation*}
	\Delta_{R,C}(A)\coloneqq\Det_{\mathbb C^{N}}(RA^{-1}C)
	=\det\bigl[\widetilde r_i\bigl(c_j+\jmath(A^{-1}-I)c_j\bigr)\bigr]_{i,j=1}^{N}.
\end{equation*}
This is the precise sense in which $\Delta_{R,C}(A)$ is a finite coordinate of
$W_A$: for coordinate choices of $R$ and $C$ it is a Pl\"ucker coordinate, while
for general finite maps it is the exterior matrix coefficient obtained by
pairing the Pl\"ucker vector of $W_A$ against
$\widetilde r_1\wedge\cdots\wedge \widetilde r_N$ and
$c_1\wedge\cdots\wedge c_N$.
In \Cref{sec:finite_rank_specialization,sec:tilted_toeplitz_minors}
we prove that
$\Delta_{R,C}(A)=G(\varphi)^{-N}D_{N}^{\xi,\theta}(\varphi)$.
Our identity
\eqref{eq:intro_tilted_main_identity} is the complementary Fredholm form of
this finite coordinate. We emphasize that the tilt is encoded in the finite maps
$R,C$, not in a positive loop $g$ acting on a Grassmannian point as in the SSW
setting.

This coordinate viewpoint is used again in \Cref{sec:dynamics}.
When the symbol $\varphi$ depends on 
times $\mathbf t=(t_1,t_2,\dots)$ via the flow $\varphi(z;\mathbf t)=\exp\bigl(\sum_{k=1}^{M}t_k 
(z^k+z^{-k})\bigr)$,
the operator
$A=A(\mathbf t)=I-K_{\mathbf t}$ and hence the graph point
$W_{A(\mathbf t)}$ move in the restricted Grassmannian. For polynomial tilts,
the corresponding exterior matrix coefficient is represented by the finite
resolvent block
\begin{equation*}
	Y_{\varphi(\cdot;\mathbf t)}^{m,n}=R_m(I-K_{\mathbf t})^{-1}C_n .
\end{equation*}
The identities in \Cref{sec:dynamics} compute the $\mathbf t$-evolution of
these finite matrix coefficients directly.

\paragraph{(ii) Shifted Toeplitz minors and Jacobi--Trudi expansions.}
The particular case of the pure-shift tilt $\xi_{j}(z)=z^{a_{j}}$,
$\theta_{i}(z)=z^{-b_{i}}$ reduces $D_{N}^{\xi,\theta}(\varphi)$ to the shifted
(lacunary) Toeplitz determinant $\det[\widehat\varphi(p_{i}-q_{j})]_{i,j=1}^{N}$
with $p_{i}=i+b_{i}$, $q_{j}=j+a_{j}$.
This shifted determinant was considered by Bump--Diaconis
\cite{BumpDiaconis2002} and Tracy--Widom \cite{TracyWidom2002} in connection
with averages over the unitary group, and further analyzed by Kozlowski
\cite{Kozlowski2013} via Riemann--Hilbert techniques in the regime where the
shifts $a,b$ grow with $N$.
Its closed combinatorial expansion as a finite sum of products of skew Schur
polynomials was given by Maximenko--Moctezuma-Salazar
\cite{MaximenkoMoctezuma2017} and Garc\'\i{}a-Garc\'\i{}a--Tierz
\cite{GarciaTierzGarcia2020}.

\paragraph{(iii) The Liu--Tripathi and Baik--Liao--Liu identities.}
In a recent preprint, Liu--Tripathi \cite[Proposition~1.2]{LiuTripathi2026} prove a
contour Fredholm determinant identity which, under the matching described
below, gives a different formula for the tilted Toeplitz minor:
\begin{equation*}
	D_{N}^{\xi,\theta}(\varphi)
	=\Det_{L^{2}(\Gamma)}\bigl(I+K^{\mathrm{LT}}\bigr),
\end{equation*}
where $K^{\mathrm{LT}}$ is an explicit double contour integral kernel on
$L^{2}(\Gamma)$, of rank at most $N$, built from the same data
$(\xi_{j},\theta_{i},\varphi_{\pm})$.
The earlier identity of Baik--Liao--Liu \cite[Proposition~4.3]{BaikLiaoLiu2026}
is a particular case of \cite{LiuTripathi2026} connected to the periodic TASEP.
In their work, this contour formula plays an important role in identifying the
periodic KPZ fixed point with general initial data.

The starting matrix entries in \cite{LiuTripathi2026} are sums of two contour
integrals,
\begin{equation}
	\label{eq:intro_Liu_Tripathi_entry}
	\oint_{0}v^{i-j-1}\,p_{i}(v)f_{j}(v)\,\frac{dv}{2\pi i}
	+\int_{\Gamma}q_{i}(u)g_{j}(u)\,\frac{du}{2\pi i},
\end{equation}
with $p_{i}$ a polynomial of degree $\le i$, $f_{j}, g_{j}$ analytic functions,
$\Gamma$ a contour enclosing the origin, and the four objects tied by a
reproducing relation $\oint_{0}v^{-i}f_{i}(v)H(v,u)\,dv/(2\pi i)=g_{i}(u)$.
This sum form comes from an \emph{additive} splitting of the symbol
$\varphi=\varphi_{+}+(\varphi-\varphi_{+})$: the inner loop at $0$ captures the
Taylor part $\varphi_{+}$ (analytic in $|z|<1$), and the outer contour $\Gamma$
captures the remainder.
Matching $f_{j}(v)=\widetilde\xi_{j}(v)\,\varphi_{+}(v)$,
$q_{i}(u)=u^{i-1}p_{i}(u)$,
$g_{j}(u)=u^{-j}\widetilde\xi_{j}(u)\,(\varphi-\varphi_{+})(u)$, and
\begin{equation*}
	H(v,u)=\frac{\varphi(u)-\varphi_{+}(u)}{(u-v)\,\varphi_{+}(v)}
\end{equation*}
(with $\widetilde\xi_{j}$ a column tilt and $p_{i}$ a row polynomial of degree
$\le i$), both contours deform to the unit circle, and the entry of
\eqref{eq:intro_Liu_Tripathi_entry} collapses to
$(p_{i}\,\widetilde\xi_{j}\,\varphi)_{j-i}$.
After transposing the matrix and identifying $p_{j}\leftrightarrow\xi_{j}$
(column tilt) and $\widetilde\xi_{i}\leftrightarrow\theta_{i}$ (row tilt), this
is exactly the tilted Toeplitz minor \eqref{eq:intro_tilted_def}.
The contour form of \cite{LiuTripathi2026} is amenable to steepest descent
asymptotic analysis, while ours is more directly connected to the original BOGC
identity.
It would be interesting to understand the precise relation between the two
Fredholm determinants, and to see if one can be transformed into the other
without going through the original determinant \eqref{eq:intro_tilted_def}.

\section*{Acknowledgments}

I thank Zhipeng Liu, whose work inspired this note: the Fredholm determinant
identities developed here arose from looking at Toeplitz minor formulas from
\cite{BaikLiaoLiu2026}, \cite{LiuTripathi2026}, which we discussed at the
workshop \emph{The Kardar--Parisi--Zhang Universality Class \& Related Topics}
at the Brin Mathematics Research Center (BMRC), University of Maryland, College
Park (April 2026).
I am also grateful to Alexei Borodin for helpful discussions.

I was partially supported by the NSF grant DMS-2153869 and by the Simons
Foundation Travel Support for Mathematicians award SFI-MPS-TSM-00013561.

\section{Abstract Grassmannian Jacobi identity}
\label{sec:grassmannian_identity}

Let $H$ be a separable Hilbert space, and let $K$ be a trace class operator
(notation: $K\in\mathfrak S_{1}(H)$).
Assume that
\begin{equation*}
	A\coloneqq I-K
\end{equation*}
is invertible.
Moreover, the Fredholm determinant $\Det_{H}(A)=\Det_H(I-K)$ is well-defined,
and $\Det_{H}(A^{-1})=\Det_H(A)^{-1}$.

Let
\begin{equation}
	\label{eq:UV-decomp} H=U\dotplus V
\end{equation}
be a direct sum decomposition into closed subspaces (not necessarily
orthogonal), and let
\begin{equation*}
	\Pi_{U}\colon H\to U,\qquad \Pi_{V}\colon H\to V
\end{equation*}
be the corresponding projections.
The space of splittings $H=U\dotplus V$ into closed subspaces is the
\emph{Grassmannian} of the section title.
The identity below describes how the observable $\tau_{A}(U,V)$ of
\Cref{def:tau_A} depends on an element of this Grassmannian, for fixed $A$.
We have
\begin{equation*}
	\Pi_{U}+\Pi_{V}=I,\qquad \Pi_{U}\Pi_{V}=\Pi_{V}\Pi_{U}=0,\qquad
	\Pi_{U}^{2}=\Pi_{U},\qquad \Pi_{V}^{2}=\Pi_{V}.
\end{equation*}

\begin{definition}\label{def:tau_A}
	The \emph{Grassmannian observable} attached to $A$ and the Hilbert space
	splitting \eqref{eq:UV-decomp} is defined as
	\begin{equation*}
		\tau_{A}(U,V)\coloneqq \Det_{U}\bigl(\Pi_{U}A^{-1}|_{U}\bigr)
		=\Det_{U}\bigl(I_{U}+\Pi_{U}(A^{-1}-I)|_{U}\bigr).
	\end{equation*}
	The second expression is an identity plus a trace class operator on $U$.
	Indeed, because
	\(A^{-1}-I=A^{-1}K\in\mathfrak S_1(H)\), the compression
	\[
	\Pi_U(A^{-1}-I)|_U:U\to U
	\]
	is trace class.
	Hence \(\tau_A(U,V)\) is well-defined.
\end{definition}

Equivalently, writing \(A^{-1}\) in block form with respect to
\(H=U\dotplus V\),
we have
\begin{equation}
	\label{eq:Ainv_block} A^{-1} =
	\begin{pmatrix}
A^{-1}_{UU} & A^{-1}_{UV}\\[5pt] A^{-1}_{VU} & A^{-1}_{VV}
	\end{pmatrix},
	\qquad A^{-1}_{UU}:U\to U, \qquad \tau_A(U,V)=\Det_U(A^{-1}_{UU}).
\end{equation}
\begin{remark}\label{rem:basis_independence}
	The trace-class condition is invariant under bounded change of basis on $U$
	inherited from a bounded automorphism of $H$ that preserves the splitting
	$H=U\dotplus V$, so $\tau_{A}(U,V)$ depends only on the pair $(U,V)$,
	independently of chosen bases or coordinates within $U$ and $V$.
\end{remark}

The proof of \Cref{thm:GBO} below rests on the following block-triangular
factorization of Fredholm determinants, adapted to oblique splittings.

\begin{lemma}
	\label{lem:block_triangular_det}
	Let $H$ be a separable Hilbert space and $H=U\dotplus V$ a topological
	direct sum of closed subspaces, with associated (oblique) projections
	$\Pi_{U},\Pi_{V}$. Let $T=I+S$ with $S\in\mathfrak S_{1}(H)$, and suppose
	$T$ is block triangular with respect to this splitting, i.e.\ either
	$\Pi_{V}T\Pi_{U}=0$ (upper) or $\Pi_{U}T\Pi_{V}=0$ (lower). Then
	\begin{equation}
		\label{eq:block_triangular_det}
		\Det_{H}(T)=\Det_{U}\bigl(\Pi_{U}T|_{U}\bigr)\,
		            \Det_{V}\bigl(\Pi_{V}T|_{V}\bigr).
	\end{equation}
\end{lemma}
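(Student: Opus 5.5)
Since $\Pi_U$ and $\Pi_V$ are bounded, complementary idempotents, the plan is to reduce the oblique case to a direct sum via a bounded similarity. Let $J\colon U\oplus V\to H$ be $J(u,v)=u+v$, where $U\oplus V$ carries the external Hilbert direct sum norm. By the closed graph (or open mapping) theorem, $J$ is a bounded isomorphism with inverse $h\mapsto(\Pi_U h,\Pi_V h)$. Fredholm determinants are invariant under similarity by bounded invertible maps between Hilbert spaces: if $S\in\mathfrak S_1(H)$, then $J^{-1}SJ\in\mathfrak S_1(U\oplus V)$ and $\Det(I+J^{-1}SJ)=\Det(I+S)$. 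This holds because the nonzero eigenvalues with algebraic multiplicities are preserved, and Lidskii's theorem then applies; alternatively one can use continuity of $\Det$ together with approximation by finite rank operators, for which the statement is the elementary finite-dimensional fact. So it suffices to prove the identity for $\widetilde T=J^{-1}TJ$ on $U\oplus V$.

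In the external decomposition, $\widetilde T$ has block form
\[
\widetilde T=\begin{pmatrix} I_U+S_{UU} & S_{UV}\\ S_{VU} & I_V+S_{VV}\end{pmatrix},
\qquad S_{XY}=\Pi_X S|_Y .
\]
Each block is trace class, since it is a composition of the trace class operator $S$ with bounded maps. In the upper triangular case $S_{VU}=0$, and we factor
\[
\widetilde T=\begin{pmatrix} I_U & S_{UV}(I_V+S_{VV})^{-1}\\ 0 & I_V\end{pmatrix}\begin{pmatrix} I_U+S_{UU} & 0\\ 0 & I_V+S_{VV}\end{pmatrix},
\]
which is valid whenever $I_V+S_{VV}$ is invertible. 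For the three factors, multiplicativity of $\Det$ on $I+\mathfrak S_1$ gives:
\begin{itemize}
\item the unipotent factor has determinant $1$, because its trace class part $N$ is nilpotent ($N^2=0$), so $\Det(I+N)=1$;
\item the block-diagonal factor has determinant $\Det_U(I_U+S_{UU})\,\Det_V(I_V+S_{VV})$, by the standard product formula for orthogonal direct sums. One can check this on finite rank approximants and pass to the limit in trace norm.
\end{itemize}
Finally, $\Pi_U T|_U=I_U+S_{UU}$ and $\Pi_V T|_V=I_V+S_{VV}$, which is the claim.

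The main obstacle is removing the assumption that $I_V+S_{VV}$ is invertible. There are two ways to handle it. One can replace $S$ by $zS$ and note that both sides of the identity are entire functions of $z$. They agree wherever $I_V+zS_{VV}$ is invertible, which is the complement of a discrete set, so they agree for all $z$; setting $z=1$ finishes the argument. Alternatively, avoid the factorization altogether and approximate $S_{UU}, S_{UV}, S_{VV}$ in trace norm by finite rank operators while keeping the zero block. The finite-dimensional block triangular determinant formula then passes to the limit by continuity of $\Det$ in $\mathfrak S_1$. The lower triangular case follows in the same way, by factoring on the other side or by interchanging the roles of $U$ and $V$.
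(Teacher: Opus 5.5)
Your proposal is correct. The paper's proof is exactly your ``alternatively'' route. After conjugating by $J$, it approximates the three trace-class blocks by finite-rank operators with the same zero block, applies the finite-dimensional block-triangular determinant formula, and passes to the limit by continuity of $\Det$ in $\mathfrak S_1$.

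Your primary route is genuinely different. You factor $\widetilde T$ as a unipotent block-triangular operator times a block-diagonal one, then use multiplicativity, $\Det(I+N)=1$ for nilpotent trace-class $N$, and the direct-sum product formula. Analytic continuation in $z$ removes the invertibility of $I_V+S_{VV}$. That continuation step is sound: $z\mapsto\Det_V(I_V+zS_{VV})$ is entire and equals $1$ at $z=0$, so its zero set is discrete, and this zero set is exactly where $I_V+zS_{VV}$ fails to be invertible. Both sides of the identity are entire in $z$, so they agree everywhere.

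The paper's approximation route is shorter and never meets an invertibility issue. Your factorization route isolates the content in multiplicativity plus the block-diagonal case. That case is itself usually checked by finite-rank approximation or by comparing eigenvalues, so approximation is not really avoided.

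The ``main obstacle'' you identify comes only from your particular choice of factorization. The factorization
\[
\widetilde T=\begin{pmatrix} I_U & 0\\ 0 & I_V+S_{VV}\end{pmatrix}\begin{pmatrix} I_U & S_{UV}\\ 0 & I_V\end{pmatrix}\begin{pmatrix} I_U+S_{UU} & 0\\ 0 & I_V\end{pmatrix}
\]
needs no inverse, and each factor is identity plus trace class. The middle factor has determinant $1$. The outer factors have determinants $\Det_V(I_V+S_{VV})$ and $\Det_U(I_U+S_{UU})$, since $0\oplus S_{VV}$ and $S_{VV}$ have the same nonzero eigenvalues with multiplicities. With this factorization the $z$-deformation becomes unnecessary.
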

\begin{proof}
	Identify the Banach direct sum $U\oplus V$ with $H$ via the bounded
	bijection $J\colon(u,v)\mapsto u+v$; the inverse
	$J^{-1}h=(\Pi_{U}h,\Pi_{V}h)$ is bounded by the closed graph theorem and
	the topological-sum assumption. Conjugation by $J$ preserves
	$\mathfrak S_{1}$ and Fredholm determinants. After conjugation, the
	diagonal blocks $\Pi_{U}T|_{U}-I_{U}$ and $\Pi_{V}T|_{V}-I_{V}$ and the
	off-diagonal block (either $\Pi_{U}T|_{V}$ in the upper case or
	$\Pi_{V}T|_{U}$ in the lower case) all lie in $\mathfrak S_{1}$.
	Approximate them in $\mathfrak S_{1}$ by finite rank operators of the
	same block form; for the resulting finite rank perturbations of the
	identity, \eqref{eq:block_triangular_det} is the standard linear-algebra
	identity for the determinant of a block-triangular matrix. Pass to the
	limit using continuity of $\Det$ in $\mathfrak S_{1}$
	\cite[Theorem~3.5]{Simon-trace-ideals}.
\end{proof}

\begin{theorem}[Grassmannian Jacobi complementary minor identity]\label{thm:GBO}
	With the notation above, we have
	\begin{equation}
		\label{eq:GBO} \tau_{A}(U,V)
		=\Det_{H}(A^{-1})\Det_{V}\bigl(\Pi_{V}A|_{V}\bigr)
		=\Det_{H}(A^{-1})\Det_{V}\bigl(I_{V}-\Pi_{V}K|_{V}\bigr).
	\end{equation}
\end{theorem}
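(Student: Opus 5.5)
The plan is to reduce \eqref{eq:GBO} to \Cref{lem:block_triangular_det} by the standard trick behind the Jacobi complementary minor identity. We multiply $A^{-1}$ by an operator that is block triangular and whose diagonal blocks are $I_U$ and $\Pi_V A|_V$. The natural candidate is
\begin{equation*}
	T\coloneqq A^{-1}\bigl(\Pi_{U}+A\Pi_{V}\bigr)=A^{-1}\Pi_{U}+\Pi_{V}.
\end{equation*}
The second expression shows that $T$ maps $U$ through $A^{-1}$ and acts as the identity on $V$. Rather than this one, we use the companion operator
\begin{equation*}
	M\coloneqq \Pi_{U}+A\Pi_{V}=I-K\Pi_{V},
\end{equation*}
which is $I$ minus a trace class operator. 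In the block form of \eqref{eq:Ainv_block}, $M$ acts as the identity on $U$ and as $A$ on $V$. Hence $\Pi_U M|_U=I_U$, $\Pi_V M\Pi_U=0$, and $\Pi_V M|_V=\Pi_V A|_V=I_V-\Pi_V K|_V$. So $M$ is upper block triangular, and \Cref{lem:block_triangular_det} gives $\Det_H(M)=\Det_V(\Pi_V A|_V)$.

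Next consider $A^{-1}M=\Pi_V+A^{-1}\Pi_U$, which lies in $I+\mathfrak S_1$ because $A^{-1}-I\in\mathfrak S_1$. On $V$ it is the identity, so $\Pi_U(A^{-1}M)\Pi_V=0$ and it is lower block triangular. Its $U$-diagonal block is $\Pi_U A^{-1}|_U=A^{-1}_{UU}$, and its $V$-diagonal block is $I_V$. Applying \Cref{lem:block_triangular_det} again gives $\Det_H(A^{-1}M)=\Det_U(A^{-1}_{UU})=\tau_A(U,V)$.

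Finally, multiplicativity of Fredholm determinants on $I+\mathfrak S_1$ gives $\Det_H(A^{-1}M)=\Det_H(A^{-1})\Det_H(M)$. Combining the three displays yields \eqref{eq:GBO}. The second equality in \eqref{eq:GBO} is just $\Pi_V A|_V=I_V-\Pi_V K|_V$, since $\Pi_V|_V=I_V$.

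The main point requiring care is making sure each operator fed into \Cref{lem:block_triangular_det} really lies in $I+\mathfrak S_1(H)$, and that the block triangularity is exact with respect to the oblique projections. Membership holds because $K\Pi_V$ is trace class (trace class times bounded) and $A^{-1}\Pi_U-\Pi_U=(A^{-1}-I)\Pi_U$ is trace class. For triangularity we only use $\Pi_U\Pi_V=\Pi_V\Pi_U=0$ and $\Pi_V|_V=I_V$, with no orthogonality needed. Beyond this bookkeeping the argument is formal. The block determinants on $U$ and $V$ are meaningful, as trace class perturbations of identities on Banach subspaces, by the conjugation argument already used in the proof of \Cref{lem:block_triangular_det}.
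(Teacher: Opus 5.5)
Your proposal is correct and is essentially the paper's proof: the paper uses the same operator $B=\Pi_U+A\Pi_V=I-K\Pi_V$, evaluates $\Det_H(B)$ and $\Det_H(A^{-1}B)$ via the upper and lower cases of \Cref{lem:block_triangular_det}, and concludes by multiplicativity of the Fredholm determinant. Your operator $T$ is just $A^{-1}M$, so saying you use $M$ ``rather than'' $T$ is only a matter of phrasing; you end up using both, exactly as the paper does.
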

\begin{remark}
	\label{rmk:finite_dimensional_case} In the finite-dimensional case, identity
	\eqref{eq:GBO} reduces to a classical linear algebra fact known as
	\emph{Jacobi's complementary minor identity}: for an invertible matrix $A$,
	the minor of $A^{-1}$ indexed by $U$ on both sides equals $\Det(A)^{-1}$
	times the minor of $A$ indexed by the complementary subspace $V$.
	\Cref{thm:GBO} extends this to the infinite-dimensional trace class setting.
\end{remark}

\begin{remark}
	In \Cref{sub:reduction_to_BOGC} below we reduce \Cref{thm:GBO} to the
	classical Borodin--Okounkov--Geronimo--Case identity.
\end{remark}

\begin{proof}[Proof of \Cref{thm:GBO}]
	Define
	\begin{equation*}
		B\coloneqq \Pi_{U}+A\Pi_{V}=I-K\Pi_{V},
	\end{equation*}
	where the second equality uses $\Pi_{U}+\Pi_{V}=I$ and $A=I-K$.
	We will compute $\Det_{H}(B)$ and $\Det_{H}(A^{-1}B)$ in two ways and
	compare.
	In the finite-dimensional case this is the standard textbook proof of
	Jacobi's complementary minor identity; we follow that route here, using
	\Cref{lem:block_triangular_det} to handle the block-triangular Fredholm
	determinant factorizations in the trace-class setting.

	We first check that all three operators $A^{-1}$, $B$, and $A^{-1}B$ are of
	the form ``identity plus trace class'', so that their Fredholm determinants
	on $H$ are well-defined.
	For $A^{-1}$, write
	\begin{equation*}
		A^{-1}=I+A^{-1}K,
	\end{equation*}
	and note that $A^{-1}K\in\mathfrak S_{1}(H)$ by the trace-class ideal
	property (product of bounded $A^{-1}$ and trace-class $K$).
	For $B$, the second equality in the definition gives $B-I=-K\Pi_{V}$, again
	trace class.
	Multiplying the two,
	\begin{equation*}
		A^{-1}B =(I+A^{-1}K)(I-K\Pi_{V}) =I+A^{-1}K-K\Pi_{V}-A^{-1}KK\Pi_{V},
	\end{equation*}
	so $A^{-1}B-I$ is a sum of three trace-class terms and is therefore in
	$\mathfrak S_{1}(H)$. (One can equally rewrite this as
	$A^{-1}B-I=A^{-1}K\Pi_{U}$ using $A^{-1}=I+A^{-1}K$ and $\Pi_{U}+\Pi_{V}=I$,
	but only trace-classness matters here.)

	The Fredholm determinant is multiplicative
	\cite[Theorem~3.5]{Simon-trace-ideals}:
	\begin{equation*}
		\Det_{H}\bigl((I+S)(I+T)\bigr) =\Det_{H}(I+S)\Det_{H}(I+T),
	\end{equation*}
	where $S,T\in\mathfrak S_{1}(H)$.
	This implies
	\begin{equation}
		\label{eq:det_mult} \Det_{H}(A^{-1}B)=\Det_{H}(A^{-1})\Det_{H}(B).
	\end{equation}
	Let us identify the factors in \eqref{eq:det_mult} with those in the desired
	identity \eqref{eq:GBO}.
	In the splitting $H=U\dotplus V$, we have the block representations
	\begin{equation*}
		B=
		\begin{pmatrix}I_{U} & \Pi_{U}A|_{V}\\ 0     & \Pi_{V}A|_{V}\end{pmatrix},
		\qquad A^{-1}B=
		\begin{pmatrix}\Pi_{U}A^{-1}|_{U} & 0\\ \Pi_{V}A^{-1}|_{U} & I_{V}\end{pmatrix},
	\end{equation*}
	cf. \eqref{eq:Ainv_block}.
	Both matrices are block-triangular ($B$ upper, $A^{-1}B$ lower), so
	\Cref{lem:block_triangular_det} gives
	\begin{equation}
		\Det_{H}(B)=\Det_{V}\bigl(\Pi_{V}A|_{V}\bigr), \qquad
		\Det_{H}(A^{-1}B)=\Det_{U}\bigl(\Pi_{U}A^{-1}|_{U}\bigr)=\tau_{A}(U,V).
	\end{equation}
	This completes the proof of \Cref{thm:GBO}.
\end{proof}

%
%
%

\section{Specialization to finite rank}
\label{sec:finite_rank_specialization}

Here we specialize \Cref{sec:grassmannian_identity} to the classical setup of
$H=\ell^{2}(\mathbb Z_{\ge 0})$ (the Hardy space of the unit circle) with a
finite-dimensional subspace $U\subset H$.
The latter is encoded by a column map $C\colon\mathbb C^{N}\to H$ with
$\operatorname{Ran}(C)=U$, coupled with a row map $R\colon H\to\mathbb C^{N}$.
In particular, the Grassmannian observable $\tau_{A}(U,V)$ becomes an
$N\times N$ determinant.
The classical Borodin--Okounkov--Geronimo--Case (BOGC) identity for $N\times N$
Toeplitz determinants arises as the orthogonal (``vacuum'') case when $U$ is the
span of the first $N$ basis vectors.

\subsection{Toeplitz operator setup}
\label{sub:hardy_setup}

Take $H=\ell^{2}(\mathbb Z_{\ge 0})$ with the orthonormal basis
$\{e_{k}\}_{k\ge 0}$, identified with the holomorphic basis
$e_{k}\leftrightarrow z^{k}$ on the unit disk
$\mathbb D=\{z\in\mathbb C:|z|<1\}$, with boundary circle $\mathbb T=\{|z|=1\}$.
Let $P_{N}\colon H\to H$ be the orthogonal projection onto the subspace
$\mathrm{span}(e_{0},\ldots,e_{N-1})$, and let $Q_{N}\coloneqq I-P_{N}$ be the
complementary projection.
Set
\begin{equation}
	\label{eq:PN_def} P_{N}H=\operatorname{span}(e_{0},\ldots,e_{N-1}),\qquad
	Q_{N}H=\overline{\operatorname{span}}(e_{N},e_{N+1},\ldots),
\end{equation}
so that $H=P_{N}H\oplus Q_{N}H$ is a closed orthogonal direct sum decomposition.

Equip $\mathbb{T}$ with the normalized Lebesgue measure.
For a function $\varphi\in L^{1}(\mathbb T)$ with Fourier expansion
$\varphi(z)=\sum_{k\in\mathbb Z}\widehat\varphi(k)\,z^{k}$, define the Toeplitz
operator $T(\varphi)\colon H\to H$ and the Hankel operator
$\mathcal H(\varphi)\colon H\to H$ by their matrix entries
\begin{equation*}
	T(\varphi)_{ij}\coloneqq \widehat\varphi(i-j),\qquad \mathcal
	H(\varphi)_{ij}\coloneqq \widehat\varphi(i+j+1),\qquad i,j\ge 0.
\end{equation*}
The associated $N\times N$ \emph{Toeplitz determinant} is
\begin{equation}
	D_{N}(\varphi)\coloneqq \Det_{P_{N}H}\bigl(P_{N}T(\varphi)P_{N}\bigr)
	=\Det\bigl[\widehat\varphi(i-j)\bigr]_{i,j=0}^{N-1}.
\end{equation}
We assume $\varphi\colon\mathbb T\to\mathbb C^{\times}$ is nonvanishing with
zero winding number around the origin, so that $\log\varphi$ is well-defined as
a continuous function on $\mathbb T$.
We also assume that
\begin{equation}
	\label{eq:Holder_assumption} \log\varphi\in C^{\alpha}(\mathbb T)
	\quad\text{for some }\alpha>\tfrac{1}{2}.
\end{equation}
Set
\begin{equation*}
 G(\varphi)\coloneqq \exp\{\widehat{\log\varphi}(0)\},
\end{equation*}
this is often called the geometric mean of~$\varphi$.
Then $\varphi$ admits a
\emph{canonical Wiener--Hopf factorization}
\begin{equation*}
	\varphi(z)=G(\varphi)\varphi_{-}(z)\varphi_{+}(z),
\end{equation*}
where $\varphi_{+}$ extends analytically to $|z|<1$ with $\varphi_{+}(0)=1$, and
$\varphi_{-}$ extends analytically to $|z|>1$ with $\varphi_{-}(\infty)=1$.
Set
\begin{equation*}
	b(z)\coloneqq \frac{\varphi_{-}(z)}{\varphi_{+}(z)},\qquad c(z)\coloneqq
	\frac{1}{b(z)}=\frac{\varphi_{+}(z)}{\varphi_{-}(z)},\qquad \widetilde
	c(z)\coloneqq c(z^{-1}),
\end{equation*}
and define
\begin{equation}
	\label{eq:K_def} K=K^{\varphi}\coloneqq \mathcal H(b)\mathcal H(\widetilde
	c)\colon H\to H.
\end{equation}
Under \eqref{eq:Holder_assumption}, $\mathcal H(b)$ and
$\mathcal H(\widetilde c)$ are Hilbert--Schmidt, so $K\in\mathfrak S_{1}(H)$ is
trace class.
(Indeed, $C^{\alpha}(\mathbb T)\subset H^{s}(\mathbb T)$ for every
$s<\alpha$; choosing $s>\tfrac{1}{2}$ gives
$\sum_{n\ge 1} n\,|\widehat b(n)|^{2}<\infty$ and likewise for
$\widetilde c$, whence the Hankel operators are Hilbert--Schmidt by
the standard criterion; see, e.g., \cite{Peller-Hankel},
\cite{BottcherSilbermann}.)
We use two classical inputs from \cite{BasorWidom2000}, \cite{Bottcher2001},
\cite{Simon-OPUC1}:
\begin{enumerate}
	\item The \emph{operator Wiener--Hopf identity}
	\begin{equation}
		\label{eq:WH_operator_identity}
		T(\varphi)=G(\varphi)T(\varphi_{+})(I-K)^{-1}T(\varphi_{-});
	\end{equation}
	\item The \emph{strong Szeg\H{o} relation}
	\begin{equation}
		\label{eq:strong_Szego} \Det_{H}(I-K)^{-1}=Z,\qquad \log Z=\sum_{k\ge
		1}k\ssp \widehat{\log\varphi}(k)\ssp \widehat{\log\varphi}(-k),
	\end{equation}
	where $\widehat{\log\varphi}(k)$ denotes the $k$-th Fourier coefficient of
	$\log\varphi$.
	Under \eqref{eq:Holder_assumption}, the series in \eqref{eq:strong_Szego}
	converges, and $Z$ is a nonzero complex number.
\end{enumerate}
Throughout the rest of the paper, we denote
\begin{equation}
	\label{eq:A_def} A=A^{\varphi}\coloneqq I-K\colon H\to H,
\end{equation}
so that $Z=\Det_{H}(A^{-1})$.

\subsection{Finite rank chart}
\label{sub:finite_rank_chart}

Fix $N\ge 1$.
A \emph{finite rank chart} on $H$ is a pair of operators
\begin{equation*}
	R\colon H\to\mathbb C^{N}\quad\text{(bounded, surjective)},\qquad
	C\colon\mathbb C^{N}\to H\quad\text{(injective)},
\end{equation*}
together with the \emph{Gram operator}
$\Gamma\coloneqq RC\in\operatorname{End}(\mathbb C^{N})$, which we assume
invertible.
Granted the injectivity of $C$ and surjectivity of $R$, the invertibility of
$\Gamma$ is equivalent to the transversality condition
$\operatorname{Ran}(C)\cap\operatorname{Ker}(R)=\{0\}$.
Set
\begin{equation}
	\label{eq:J_def} J\coloneqq C\Gamma^{-1}\colon \mathbb C^{N}\to H,\qquad
	\Pi_{U}\coloneqq JR=C(RC)^{-1}R,\qquad \Pi_{V}\coloneqq I-\Pi_U.
\end{equation}
both viewed as operators $H\to H$.
Then $RJ=I_{\mathbb C^{N}}$, so $J$ is a bounded right-inverse of~$R$ with
$\operatorname{Ran}(J)=\operatorname{Ran}(C)$; $\Pi_{U}$ projects onto
$\operatorname{Ran}(C)$ along $\operatorname{Ker}(R)$; and $\Pi_{V}$ is the
complementary projection onto $\operatorname{Ker}(R)$ along
$\operatorname{Ran}(C)$.
The splitting
\begin{equation}
	H=U\dotplus V,\qquad U\coloneqq \operatorname{Ran}(C),\qquad V\coloneqq
	\operatorname{Ker}(R),
\end{equation}
satisfies the hypotheses of \Cref{thm:GBO}, with $\dim U=N$.

\subsection{Finite rank Grassmannian Jacobi identity}
\label{sub:finite_GBO}

\begin{theorem}
	\label{thm:finite_GBO} For $K$, $A$ as in \eqref{eq:K_def}, \eqref{eq:A_def}
	depending on a symbol $\varphi$ as in \Cref{sub:hardy_setup}, and $(R,C)$ a
	finite rank chart with $\Gamma=RC$ invertible, we have
	\begin{equation}
		\label{eq:finite_GBO} \Det_{\mathbb C^{N}}\bigl(RA^{-1}C\bigr)
		=Z\cdot\Det_{\mathbb C^{N}}\Gamma\cdot
		\Det_{\operatorname{Ker}(R)}\bigl(I_{\operatorname{Ker}(R)}-\Pi_{V}K|_{\operatorname{Ker}(R)}\bigr),
	\end{equation}
	where $Z$ is the strong Szeg\H{o} normalization \eqref{eq:strong_Szego}.
\end{theorem}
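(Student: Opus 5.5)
The plan is to apply \Cref{thm:GBO} to the splitting $H=U\dotplus V$ with $U=\operatorname{Ran}(C)$ and $V=\operatorname{Ker}(R)$ built in \Cref{sub:finite_rank_chart}, and then convert the finite determinant $\tau_A(U,V)=\Det_U(\Pi_U A^{-1}|_U)$ into $\Det_{\mathbb C^N}(RA^{-1}C)$. The hypotheses of \Cref{thm:GBO} hold: $U$ is finite dimensional, hence closed; $V$ is closed as the kernel of a bounded map; and $\Pi_U=C\Gamma^{-1}R$ is bounded, so the sum is topological. The operator $K$ is trace class by \Cref{sub:hardy_setup}. We also need $A=I-K$ to be invertible. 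This follows from \eqref{eq:WH_operator_identity}, where $(I-K)^{-1}$ appears, or equivalently from $\Det_H(I-K)=Z^{-1}\neq0$ together with the Fredholm alternative. Granting this, \Cref{thm:GBO} yields
\[
\tau_A(U,V)=\Det_H(A^{-1})\,\Det_V\bigl(I_V-\Pi_VK|_V\bigr)=Z\cdot\Det_{\operatorname{Ker}(R)}\bigl(I-\Pi_VK|_{\operatorname{Ker}(R)}\bigr),
\]
where the second equality uses $Z=\Det_H(A^{-1})$ from \eqref{eq:strong_Szego}.

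The remaining step is the finite-dimensional computation $\tau_A(U,V)=\Det_{\mathbb C^N}(RA^{-1}C)\cdot(\Det\Gamma)^{-1}$. Once this is in hand, multiplying through by $\Det\Gamma$ gives \eqref{eq:finite_GBO}. To prove it, we use that $C\colon\mathbb C^N\to U$ is a linear isomorphism, since $C$ is injective with range $U$. Conjugating by $C$ gives
\[
\Det_U(\Pi_UA^{-1}|_U)=\Det_{\mathbb C^N}\bigl(C^{-1}\Pi_UA^{-1}C\bigr),
\]
because a determinant on a finite-dimensional space is basis independent. Since $\Pi_U=C\Gamma^{-1}R$, we have $C^{-1}\Pi_U=\Gamma^{-1}R$ on $H$. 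The matrix is therefore $\Gamma^{-1}RA^{-1}C$, and its determinant is $\Det(RA^{-1}C)/\Det\Gamma$.

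The only point requiring care is the invertibility of $A$ needed for \Cref{thm:GBO}; everything else is bookkeeping. For that step we would appeal to the classical inputs. Alternatively, one can note that $Z$ being a finite nonzero number forces $\Det(I-K)\neq0$, which for identity plus trace class is equivalent to invertibility.
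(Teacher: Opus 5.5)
Your proposal is correct and matches the paper's proof: both apply \Cref{thm:GBO} to $U=\operatorname{Ran}(C)$, $V=\operatorname{Ker}(R)$ and identify $\tau_A(U,V)$ with $\Det_{\mathbb C^N}(\Gamma^{-1}RA^{-1}C)=\Det_{\mathbb C^N}(RA^{-1}C)/\Det_{\mathbb C^N}\Gamma$ by conjugating through the isomorphism $C\colon\mathbb C^N\to U$, whose inverse is $\Gamma^{-1}R|_U$. Your explicit check of the hypotheses of \Cref{thm:GBO} (closed subspaces, bounded $\Pi_U$, and invertibility of $A$ via $\Det_H(I-K)=Z^{-1}\neq 0$) is a harmless addition that the paper leaves implicit.
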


\begin{proof}
	Factor the left-hand side of \eqref{eq:finite_GBO} as
	\begin{equation*}
		RA^{-1}C=(RC)\cdot(RC)^{-1}RA^{-1}C,
	\end{equation*}
	giving
	\begin{equation}
		\Det_{\mathbb C^{N}}\bigl(RA^{-1}C\bigr) =\Det_{\mathbb
		C^{N}}\Gamma\cdot\Det_{\mathbb C^{N}}\bigl(\Gamma^{-1}RA^{-1}C\bigr).
	\end{equation}
	The bijection $C\colon \mathbb C^{N}\to U=\operatorname{Ran}(C)$ has inverse
	$\Gamma^{-1}R|_{U}\colon U\to \mathbb C^{N}$.
	Thus, under the isomorphism $C\colon \mathbb C^{N}\xrightarrow{\sim}U$, the
	operator $(RC)^{-1}RA^{-1}C$ on~$\mathbb C^{N}$ is conjugate to
	\begin{equation*}
		\Pi_{U}A^{-1}\Pi_U=C\Gamma^{-1}RA^{-1}C\Gamma^{-1}R
	\end{equation*}
	on the $N$-dimensional space $U$.
	Thus, their determinants are equal:
	\begin{equation*}
		\Det_{\mathbb C^{N}}\bigl((RC)^{-1}RA^{-1}C\bigr)
		=\Det_{U}\bigl(\Pi_{U}A^{-1}|_{U}\bigr)=\tau_{A}(U,V).
	\end{equation*}
	Applying \Cref{thm:GBO} yields the desired result.
\end{proof}

\begin{lemma}[Finite rank oblique correction]
\label{lem:finite_rank_oblique_correction}
Let $(R,C)$ be a finite rank chart with $\Gamma=RC$ invertible, and let
$f_1,\ldots,f_N$ be the standard basis of $\mathbb C^N$. Set
\begin{equation*}
\qquad c_\alpha\coloneqq C f_\alpha\in H,\qquad
\qquad \psi_\alpha(j)\coloneqq f_\alpha^{\top}\Gamma^{-1}R K e_j,\quad 1\le\alpha\le N.
\end{equation*}
Then, before restriction to $V=\operatorname{Ker}(R)$, we have
\[
(\Pi_VK)(i,j)=K(i,j)-\sum_{\alpha=1}^{N}c_\alpha(i)\,\psi_\alpha(j).
\]
Thus, the oblique kernel is the BOGC kernel plus at most $N$ rank-one
corrections.
\end{lemma}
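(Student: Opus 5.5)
The plan is to expand the definition $\Pi_V=I-\Pi_U=I-C\Gamma^{-1}R$ from \eqref{eq:J_def} and compute matrix entries in the standard basis $\{e_k\}_{k\ge0}$ of $H$. First, $\Pi_VK=K-C\Gamma^{-1}RK$. Taking the $(i,j)$ entry means pairing with $e_i$ on the left and applying the operator to $e_j$:
\begin{equation*}
(\Pi_VK)(i,j)=K(i,j)-\bigl\langle e_i,\,C\Gamma^{-1}RKe_j\bigr\rangle .
\end{equation*}
Note that $RKe_j\in\mathbb C^N$ makes sense because $R$ is bounded and $Ke_j\in H$.

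Next, insert the resolution of the identity on $\mathbb C^N$, namely $I_{\mathbb C^N}=\sum_{\alpha=1}^N f_\alpha f_\alpha^{\top}$, between $C$ and $\Gamma^{-1}RKe_j$. This gives
\begin{equation*}
C\Gamma^{-1}RKe_j=\sum_{\alpha=1}^{N}(Cf_\alpha)\,\bigl(f_\alpha^{\top}\Gamma^{-1}RKe_j\bigr)=\sum_{\alpha=1}^{N}c_\alpha\,\psi_\alpha(j).
\end{equation*}
Taking the $i$-th coordinate yields $\sum_\alpha c_\alpha(i)\psi_\alpha(j)$, which is the claimed formula. Each summand $c_\alpha\otimes\psi_\alpha$ is rank one. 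Here $\psi_\alpha$ is the bounded functional $h\mapsto f_\alpha^{\top}\Gamma^{-1}RKh$, whose coefficients in the basis are $\psi_\alpha(j)$. Hence the correction $C\Gamma^{-1}RK$ has rank at most $N$.

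No step is a real obstacle; the argument is a direct computation. The only point requiring care is bookkeeping: the entry is computed on all of $H$ before any restriction to $V=\operatorname{Ker}(R)$, exactly as the statement specifies. One should also note that the correction is trace class, being of finite rank, so that $\Pi_VK$ is still trace class. This is consistent with its use in \Cref{thm:finite_GBO}.
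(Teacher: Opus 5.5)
Your proposal is correct and follows the paper's proof: substitute $\Pi_V=I-C\Gamma^{-1}R$ to get $\Pi_VK=K-C\Gamma^{-1}RK$, then insert $\sum_{\alpha}f_\alpha f_\alpha^{\top}$ to split the correction into $N$ rank-one terms $c_\alpha\otimes\psi_\alpha$. Your remarks on boundedness and trace-class membership are accurate extras that the paper leaves implicit.
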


\begin{proof}
From $\Pi_V=I-C\Gamma^{-1}R$ in \eqref{eq:J_def},
\[
\Pi_VK=K-C\Gamma^{-1}RK
=K-\sum_{\alpha=1}^{N}(Cf_\alpha)\,(f_\alpha^{\top}\Gamma^{-1}RK),
\]
which is the stated coordinate identity.
\end{proof}

\subsection{Reduction to Borodin--Okounkov--Geronimo--Case identity}
\label{sub:reduction_to_BOGC}

The classical Borodin--Okounkov--Geronimo--Case (BOGC) identity
\cite{GeronimoCase1979}, \cite{BorodinOkounkov2000}, \cite{BasorWidom2000},
\cite{Bottcher2001} is the orthogonal (``vacuum'') case $R=C=P_{N}$ of
\Cref{thm:finite_GBO}.
The same Jacobi-based reduction in special case was carried out in
\cite{BottcherWidom2006}.
For completeness, let us record how our general finite rank identity
(\Cref{thm:finite_GBO}) specializes to the BOGC identity:

\begin{corollary}[Borodin--Okounkov--Geronimo--Case identity]\label{cor:BOGC}
	With the notation \eqref{eq:PN_def}--\eqref{eq:strong_Szego}, we have
	\begin{equation}
		D_{N}(\varphi)=G(\varphi)^{N}\cdot Z\cdot
		\Det_{Q_{N}H}\bigl(I_{Q_{N}H}-Q_{N}K|_{Q_{N}H}\bigr).
	\end{equation}
\end{corollary}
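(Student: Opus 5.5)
The plan is to apply \Cref{thm:finite_GBO} with the orthogonal chart $R=P_{N}\colon H\to\mathbb C^{N}$ and $C=P_{N}\colon\mathbb C^{N}\to H$, using the convention of \Cref{sub:intro_tilted} that identifies $P_NH$ with $\mathbb C^N$. Then $\Gamma=RC=I_{\mathbb C^{N}}$, so $\Det_{\mathbb C^N}\Gamma=1$. Moreover $\Pi_{U}=P_{N}$, $\Pi_{V}=Q_{N}$, and $\operatorname{Ker}(R)=Q_{N}H$. The right-hand side of \eqref{eq:finite_GBO} therefore becomes
\[
Z\cdot\Det_{Q_{N}H}\bigl(I_{Q_NH}-Q_{N}K|_{Q_{N}H}\bigr),
\]
which is exactly the Fredholm factor in the claimed identity. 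The remaining task is to identify the left-hand side $\Det_{\mathbb C^{N}}(P_{N}A^{-1}P_{N})$ with $G(\varphi)^{-N}D_{N}(\varphi)$.

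For this we use the operator Wiener--Hopf identity \eqref{eq:WH_operator_identity}:
\[
P_{N}T(\varphi)P_{N}=G(\varphi)\,P_{N}T(\varphi_{+})\,A^{-1}\,T(\varphi_{-})P_{N}.
\]
The key triangularity facts are as follows. $T(\varphi_{+})$ has entries $\widehat{\varphi_+}(i-j)$, which vanish for $i<j$, so it is lower triangular. Hence $P_{N}T(\varphi_{+})=P_{N}T(\varphi_{+})P_{N}$, since row $i<N$ only involves columns $j\le i<N$. Dually, $T(\varphi_{-})$ is upper triangular, so $T(\varphi_{-})P_{N}=P_{N}T(\varphi_{-})P_{N}$. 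This gives the factorization
\[
P_{N}T(\varphi)P_{N}=G(\varphi)\,\bigl(P_{N}T(\varphi_{+})P_{N}\bigr)\bigl(P_{N}A^{-1}P_{N}\bigr)\bigl(P_{N}T(\varphi_{-})P_{N}\bigr)
\]
as $N\times N$ matrices.

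The two triangular compressions have diagonal entries $\widehat{\varphi_+}(0)=\varphi_+(0)=1$ and $\widehat{\varphi_-}(0)=\varphi_-(\infty)=1$, so both have determinant $1$. Taking determinants gives $D_{N}(\varphi)=G(\varphi)^{N}\Det(P_{N}A^{-1}P_{N})$. Combining this with the first step yields the corollary.

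The only step needing care is the triangularity bookkeeping: confirming which of $T(\varphi_\pm)$ is lower or upper triangular in the convention $T(\varphi)_{ij}=\widehat\varphi(i-j)$, and checking that the compressions with $P_N$ split multiplicatively on the correct sides. I would check this directly from the Fourier supports. $\varphi_+$ has only nonnegative modes, so $\widehat{\varphi_+}(i-j)=0$ when $i<j$. $\varphi_-$ has only nonpositive modes, so $\widehat{\varphi_-}(i-j)=0$ when $i>j$. Everything else is a direct substitution into \Cref{thm:finite_GBO}.
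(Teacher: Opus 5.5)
Your proposal is correct and follows the paper's proof essentially step for step. The paper also specializes \Cref{thm:finite_GBO} to $R=C=P_N$ (so $\Gamma=I$ and $\operatorname{Ker}(R)=Q_NH$), and then uses the Wiener--Hopf identity together with the lower/upper triangularity of $T(\varphi_\pm)$ and their unit diagonals to obtain $D_N(\varphi)=G(\varphi)^N\Det\bigl(P_NA^{-1}P_N\bigr)$.
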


\begin{proof}
	Identify $U$ with $\mathbb C^{N}$ in a canonical way, that is, set
	$R=C=P_{N}$ in \Cref{thm:finite_GBO}, where $P_N$ is the orthogonal
	projection onto the first $N$ basis vectors.
	Then $\Gamma$ is the identity on $\mathbb C^{N}$, and
	$\Det_{\mathbb C^{N}}\Gamma=1$.
	We have $U=P_NH$ and $V=Q_NH$, the orthogonal splitting of $H$ along the
	first $N$ basis vectors.
	The right-hand side of \eqref{eq:finite_GBO} becomes
	\begin{equation}
		\label{eq:BOGC_RHS}
		Z\cdot\Det_{Q_{N}H}\bigl(I_{Q_{N}H}-Q_{N}K|_{Q_{N}H}\bigr),
	\end{equation}
	where $Z$ is the strong Szeg\H{o} normalization \eqref{eq:strong_Szego}.
	For the left-hand side, we use the Wiener--Hopf identity
	\eqref{eq:WH_operator_identity} together with the triangular nature of the
	Toeplitz operators $T(\varphi_{\pm})$.
	Namely, since $\varphi_{+}$ is analytic in $|z|<1$ with $\varphi_{+}(0)=1$,
	the matrix $T(\varphi_{+})$ is uni-lower-triangular, hence
	$P_{N}T(\varphi_{+})Q_{N}=0$, so
	\begin{equation}
		\label{eq:Tp_directional} P_{N}T(\varphi_{+})=P_{N}T(\varphi_{+})P_{N}.
	\end{equation}
	Similarly, $T(\varphi_{-})$ is uni-upper-triangular,
	$Q_{N}T(\varphi_{-})P_{N}=0$, so
	\begin{equation}
		\label{eq:Tm_directional} T(\varphi_{-})P_{N}=P_{N}T(\varphi_{-})P_{N}.
	\end{equation}
	Notice that $P_{N}T(\varphi_{\pm})P_{N}$ are unitriangular $N\times N$
	matrices on $P_{N}H$, with determinant~$1$.
	Applying \eqref{eq:Tp_directional}--\eqref{eq:Tm_directional}, we get
	\begin{equation*}
		P_{N}T(\varphi)P_{N}
		=G(\varphi)\bigl(P_{N}T(\varphi_{+})P_{N}\bigr)(I-K)^{-1}
		\bigl(P_{N}T(\varphi_{-})P_{N}\bigr),
	\end{equation*}
	an equality of operators on $H$ that act trivially on $Q_{N}H$, equivalently,
	of operators on $P_{N}H$ after compression. The outer factors are
	unitriangular and act within $P_{N}H$, so restricting to $P_{N}H$ and taking
	determinants yields
	\begin{equation*}
		D_{N}(\varphi) =\Det_{P_{N}H}\bigl(P_{N}T(\varphi)P_{N}\bigr)
		=G(\varphi)^{N}\Det_{P_{N}H}\bigl(P_{N}(I-K)^{-1}P_{N}\bigr)
		=G(\varphi)^{N}\Det_{\mathbb C^{N}}\bigl(RA^{-1}C\bigr),
	\end{equation*}
	where the factor $G(\varphi)^{N}$ comes from the scalar $G(\varphi)$ in
	\eqref{eq:WH_operator_identity} acting on the $N$-dimensional space
	$P_{N}H$.
	Combined with \eqref{eq:finite_GBO} and \eqref{eq:BOGC_RHS}, this gives the
	desired identity.
\end{proof}

\section{Tilted Toeplitz minors}
\label{sec:tilted_toeplitz_minors}

Starting here, we make the operators $R,C$ defining the finite rank chart in
\Cref{thm:finite_GBO} explicit via families of analytic ``tilt'' functions
$\xi_{j}$ and $\theta_{i}$.
The resulting Toeplitz-like determinants are interpreted as Weyl-type
bialternant expressions, and also admit Cauchy--Binet expansions.
The latter extend Gessel's theorem
\cite{gessel1990symmetric}
representing a restricted sum of products of Schur functions as a Toeplitz
determinant.

\subsection{Tilted Toeplitz minors setup}
\label{sub:tilted_toeplitz}

Fix two collections of functions $\xi_{j}$ and $\theta_{i}$, $1\le i,j\le N$:
each $\xi_{j}$ is bounded analytic on $|z|<1$, and each $\theta_{i}$ is bounded
analytic on $|z|>1$ and regular at $z=\infty$.
We impose no normalization at $z=0$ or $z=\infty$, so monomials and arbitrary
analytic profiles are both allowed.
Recall the basis identification $e_{k}\leftrightarrow z^{k}$ for the Hardy space
$H$, under which a vector $h\in H$ is identified with the power series
$h(z)=\sum_{k\ge 0}h_{k}z^{k}$.

\begin{definition}[Tilt operators]
	\label{def:tilt_operators} The \emph{column tilt operator}
	$\Xi\colon H\to H$ acts column by column: it sends the $j$-th basis vector
	$e_{j-1}$ (corresponding to the monomial $z^{j-1}$) to the vector identified
	with the function $z^{j-1}\xi_{j}(z)$, and leaves the basis vectors
	$e_{k}$ for $k\ge N$ untouched.
	In matrix form, the $j$-th column of $\Xi$ for $1\le j\le N$ is the Fourier
	coefficient sequence of $z^{j-1}\xi_{j}(z)$, padded with zeros at indices
	$<j-1$:
	\begin{equation}
		\label{eq:Xi_def} \Xi\,e_{j-1}=\sum_{k\ge
		0}\widehat\xi_{j}(k)\,e_{j-1+k} \quad (1\le j\le N), \qquad
		\Xi\,e_{k}=e_{k}\quad (k\ge N).
	\end{equation}
	This sum is finite whenever each $\xi_{j}$ is a polynomial in $z$.

	The \emph{row tilt operator} $\Theta\colon H\to\mathbb C^{N}$ acts row by
	row: its $i$-th component $(\Theta h)_{i}\in\mathbb C$, for $1\le i\le N$,
	is the $z^{i-1}$-Fourier coefficient of the product $\theta_{i}(z)\,h(z)$.
	Since $\theta_{i}$ is bounded analytic on $|z|>1$ and regular at infinity,
	its Fourier expansion
	$\theta_{i}(z)=\sum_{m\ge 0}\widehat\theta_{i}(-m)\,z^{-m}$ has only
	nonpositive modes, and so
	\begin{equation}
		\label{eq:Theta_def} (\Theta h)_{i}=\sum_{m\ge
		0}\widehat{\theta_{i}}(-m)\,h_{i-1+m} \quad (1\le i\le N).
	\end{equation}
	The sum is finite whenever each $\theta_{i}$ is a polynomial in $z^{-1}$.

	Both $\Xi-I$ and $\Theta$ have rank at most $N$.
\end{definition}

The tilt operators combine with the Wiener--Hopf factors of $\varphi$ into the
finite-dimensional chart maps
\begin{equation}
	\label{eq:tilted_chart} R\coloneqq \Theta\,T(\varphi_{+})\colon H\to\mathbb
	C^{N}, \qquad C\coloneqq T(\varphi_{-})\,\Xi\,P_{N}\colon\mathbb C^{N}\to H.
\end{equation}
The associated Gram operator is
\begin{equation}
	\label{eq:tilted_Gram} \Gamma_{\xi,\theta}\coloneqq R\,C
	=\Theta\,T(\varphi_{+})\,T(\varphi_{-})\,\Xi\,P_{N}\in\operatorname{End}(\mathbb C^{N}).
\end{equation}
When $\Gamma_{\xi,\theta}$ is invertible, these maps give a finite rank chart in
the sense of \Cref{sub:finite_rank_chart}.

\begin{definition}
	\label{def:tilted_minor} The $N\times N$ \emph{tilted Toeplitz minor}
	associated to the symbol $\varphi$ and the tilts $\xi_{j}$, $\theta_{i}$ is
	defined as
	\begin{equation}
		D_{N}^{\xi,\theta}(\varphi) \coloneqq
		\det\Bigl[\bigl(\theta_{i}\,\xi_{j}\,\varphi\bigr)_{i-j}\Bigr]_{i,j=1}^{N},
	\end{equation}
	where $(f)_{k}\coloneqq \widehat f(k)$ denotes the $k$-th Fourier
	coefficient.
\end{definition}

\begin{lemma}\label{lem:tilted_minor_LHS}
	The tilted Toeplitz minor can be written as
	\begin{equation*}
		D_{N}^{\xi,\theta}(\varphi)
		=G(\varphi)^{N}\,\Det_{\mathbb C^{N}}\bigl(R\,A^{-1}\,C\bigr).
	\end{equation*}
\end{lemma}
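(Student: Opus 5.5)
The plan is to substitute the operator Wiener--Hopf identity \eqref{eq:WH_operator_identity} into the compression $\Theta\,T(\varphi)\,\Xi\,P_{N}$. First, identify the matrix of the tilted minor with this compression. By \eqref{eq:Xi_def}, $\Xi e_{j-1}$ is the coefficient sequence of $z^{j-1}\xi_{j}(z)$, a function analytic in $|z|<1$. I will check that applying $T(\varphi)$ and then $\Theta$ gives
\begin{equation*}
	(\Theta\,T(\varphi)\,\Xi\,e_{j-1})_{i}=\bigl(\theta_{i}\,\xi_{j}\,\varphi\bigr)_{i-j}.
\end{equation*}
For $h\in H$, the vector $T(\varphi)h$ is the Riesz projection $P_{+}(\varphi h)$ onto nonnegative modes. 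By \eqref{eq:Theta_def}, $(\Theta g)_{i}$ equals the $z^{i-1}$ coefficient of $\theta_{i}g$, and since $\theta_{i}$ has only nonpositive modes this reads only coefficients of $g$ with indices $\ge i-1\ge 0$. Hence $(\Theta P_{+}f)_{i}=(\theta_{i}f)_{i-1}$ for any $f\in L^{2}$, so the projection $P_{+}$ can be dropped. With $f=\varphi\,z^{j-1}\xi_{j}$, the $z^{i-1}$ coefficient of $\theta_{i}\varphi\xi_{j}z^{j-1}$ is $(\theta_{i}\xi_{j}\varphi)_{i-j}$. Therefore
\begin{equation*}
	D_{N}^{\xi,\theta}(\varphi)=\Det_{\mathbb C^{N}}\bigl(\Theta\,T(\varphi)\,\Xi\,P_{N}\bigr),
\end{equation*}
where $\Xi P_{N}$ is read as a map $\mathbb C^{N}\to H$.

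Second, insert \eqref{eq:WH_operator_identity}, $T(\varphi)=G(\varphi)T(\varphi_{+})(I-K)^{-1}T(\varphi_{-})$. This gives
\begin{equation*}
	\Theta\,T(\varphi)\,\Xi\,P_{N}=G(\varphi)\,\Theta T(\varphi_{+})\,A^{-1}\,T(\varphi_{-})\Xi P_{N}=G(\varphi)\,R\,A^{-1}\,C,
\end{equation*}
by the definitions \eqref{eq:tilted_chart}. Taking the $N\times N$ determinant pulls out $G(\varphi)^{N}$, which proves the lemma.

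The main obstacle is the bookkeeping in the first step: checking that the Riesz projection hidden in $T(\varphi)$ is harmless when followed by $\Theta$. This holds because $\theta_{i}$ contains only nonpositive powers, so it never pulls negative modes up to index $i-1\ge 0$. A lesser point is justifying the identity $(\Theta P_{+}f)_{i}=(\theta_{i}f)_{i-1}$ analytically. The sums in \eqref{eq:Theta_def} converge because $\theta_{i}$ is bounded, so $\theta_{i}f\in L^{2}$ and Fourier coefficients of products are computed as $L^{2}$ pairings. Finally, I will note that no use of triangularity (as in \Cref{cor:BOGC}) is needed here: the tilts are absorbed entirely into $R$ and $C$.
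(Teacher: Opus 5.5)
Your proof is correct and follows the paper's argument. Like the paper, you substitute the Wiener--Hopf identity into $\Theta\,T(\varphi)\,\Xi\,P_N$ to obtain $G(\varphi)\,R\,A^{-1}C$, and you identify its entries with $(\theta_i\xi_j\varphi)_{i-j}$. Your Riesz-projection step, $(\Theta P_+ f)_i=(\theta_i f)_{i-1}$, is a compact form of the paper's explicit double-sum convolution check.
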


\begin{proof}
	By the operator Wiener--Hopf identity \eqref{eq:WH_operator_identity},
	\begin{equation*}
		\Theta\,T(\varphi)\,\Xi\,P_{N}
		=G(\varphi)\,\Theta\,T(\varphi_{+})\,(I-K)^{-1}\,T(\varphi_{-})\,\Xi\,P_{N} =G(\varphi)\,R\,A^{-1}\,C,
	\end{equation*}
	where $R$, $C$ are as in \eqref{eq:tilted_chart} and $A=I-K$.

	Let us compute the matrix entries of the operator
	$\Theta\,T(\varphi)\,\Xi\,P_{N}$ on $\mathbb C^{N}$ by tracking Fourier
	coefficients.
	Throughout the calculation, $\varphi$ is a general symbol with Fourier
	coefficients $\widehat\varphi(c)$ defined at all integers $c\in\mathbb Z$;
	note that $\xi_{j}$ and $\theta_{i}$ have constrained Fourier support.

	Fix $1\le j\le N$.
	Since $j-1<N$, the projection $P_{N}$ acts as the identity on $e_{j-1}$, so
	\begin{equation*}
		\Xi\,P_{N}\,e_{j-1}=\sum_{n\ge 0}\widehat\xi_{j}(n)\,e_{j-1+n},
	\end{equation*}
	a vector in $H$ whose $\ell$-th coordinate equals
	$\widehat\xi_{j}(\ell-j+1)$ for $\ell\ge j-1$ and zero otherwise.
	Applying $T(\varphi)$ to the vector above, we obtain a vector whose
	$\ell$-th coordinate is given by
	\begin{equation*}
		\bigl(T(\varphi)\,\Xi\,P_{N}\,e_{j-1}\bigr)_{\ell} =\sum_{k\ge
		0}\widehat\varphi(\ell-k)\, \bigl(\Xi\,P_{N}\,e_{j-1}\bigr)_{k}
		=\sum_{n\ge 0}\widehat\varphi(\ell-j+1-n)\,\widehat\xi_{j}(n).
	\end{equation*}
	Finally, the operator $\Theta$ extracts row Fourier coefficients.
	Substituting the vector above with $\ell=i-1+m$, we get by
	\eqref{eq:Theta_def}:
	\begin{equation}
		\label{eq:tilted_entry_double_sum}
		\bigl(\Theta\,T(\varphi)\,\Xi\,P_{N}\bigr)_{ij} =\sum_{m\ge 0}\sum_{n\ge
		0} \widehat\theta_{i}(-m)\,\widehat\varphi(i-j+m-n)\,\widehat\xi_{j}(n).
	\end{equation}
	We compare \eqref{eq:tilted_entry_double_sum} with the convolution
	\begin{equation*}
		\bigl(\theta_{i}\,\xi_{j}\,\varphi\bigr)_{i-j} =\sum_{a+b+c=i-j}
		\widehat\theta_{i}(a)\,\widehat\xi_{j}(b)\,\widehat\varphi(c),
	\end{equation*}
	where $a,b,c\in\mathbb Z$.
	Note that $\widehat\theta_{i}(a)=0$ for $a>0$ and $\widehat\xi_{j}(b)=0$ for
	$b<0$.
	Setting $a=-m$ ($m\ge 0$), $b=n$ ($n\ge 0$), and $c=i-j+m-n$, we see that
	the convolution above is exactly the double sum in
	\eqref{eq:tilted_entry_double_sum}.
	This completes the proof.
\end{proof}

\begin{theorem}\label{thm:tilted_chart}
	Assume that $\Gamma_{\xi,\theta}$ \eqref{eq:tilted_Gram} is invertible on
	$\mathbb C^{N}$.
	Then the tilted Toeplitz minor admits the Fredholm representation
	\begin{equation}
		\label{eq:tilted_finite_GBO} D_{N}^{\xi,\theta}(\varphi)
		=G(\varphi)^{N}\cdot Z\cdot\Det_{\mathbb
		C^{N}}(\Gamma_{\xi,\theta})\cdot
		\Det_{\operatorname{Ker}(R)}\bigl(I_{\operatorname{Ker}(R)}-\Pi_{V}K|_{\operatorname{Ker}(R)}\bigr),
	\end{equation}
	where $R=\Theta\,T(\varphi_{+})$ and $C=T(\varphi_{-})\,\Xi\,P_{N}$ are the
	chart maps \eqref{eq:tilted_chart}, and
	$\Pi_{V}=I-C\Gamma_{\xi,\theta}^{-1}R$ is the oblique projection onto
	$\operatorname{Ker}(R)$ along $\operatorname{Ran}(C)$.
\end{theorem}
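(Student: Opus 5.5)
The plan is to combine \Cref{lem:tilted_minor_LHS} with \Cref{thm:finite_GBO}. By \Cref{lem:tilted_minor_LHS}, $D_{N}^{\xi,\theta}(\varphi)=G(\varphi)^{N}\,\Det_{\mathbb C^{N}}(RA^{-1}C)$ with $R=\Theta\,T(\varphi_{+})$ and $C=T(\varphi_{-})\,\Xi\,P_{N}$. It then suffices to apply \Cref{thm:finite_GBO} to this pair, which gives
\begin{equation*}
	\Det_{\mathbb C^{N}}(RA^{-1}C)=Z\cdot\Det_{\mathbb C^{N}}\Gamma_{\xi,\theta}\cdot\Det_{\operatorname{Ker}(R)}\bigl(I_{\operatorname{Ker}(R)}-\Pi_{V}K|_{\operatorname{Ker}(R)}\bigr).
\end{equation*}
Multiplying by $G(\varphi)^{N}$ yields \eqref{eq:tilted_finite_GBO}.

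The only point to check is that $(R,C)$ is a finite rank chart in the sense of \Cref{sub:finite_rank_chart}. Here I would argue directly from invertibility of $\Gamma_{\xi,\theta}=RC$.

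\begin{itemize}
	\item $R$ is bounded, since $\Theta$ is bounded under the standing assumptions and $T(\varphi_{+})$ is bounded because $\varphi_{+}$ is bounded on $\mathbb T$ by the H\"older assumption. $C$ is bounded for the same reasons.
	\item Invertibility of $RC$ forces $R$ to be surjective onto $\mathbb C^{N}$ and $C$ to be injective on $\mathbb C^{N}$. Hence $U=\operatorname{Ran}(C)$ is $N$-dimensional and therefore closed.
	\item $\operatorname{Ker}(R)$ is closed because $R$ is bounded.
	\item The projection $C\Gamma^{-1}R$ is bounded and of finite rank. So $H=U\dotplus V$ is a topological direct sum, and the hypotheses of \Cref{thm:GBO}, as used inside \Cref{thm:finite_GBO}, are met.
\end{itemize}

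I do not expect a real obstacle. The only mildly delicate point is that \Cref{thm:finite_GBO} nominally assumes surjectivity and injectivity as part of the chart definition, and these have to be derived from invertibility of $\Gamma_{\xi,\theta}$ as in the second item above.

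The trace-class property of $\Pi_{V}K$ on $\operatorname{Ker}(R)$ is automatic, since $\Pi_{V}$ is bounded and $K\in\mathfrak S_{1}(H)$. So the Fredholm determinant on the right-hand side is well defined.
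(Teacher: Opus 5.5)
Your proposal is correct and follows the paper's own argument: the theorem comes from combining \Cref{lem:tilted_minor_LHS} with \Cref{thm:finite_GBO}. Your extra check is also correct and fills in a detail the paper leaves implicit: invertibility of $\Gamma_{\xi,\theta}=RC$ forces $R$ to be surjective and $C$ injective, so $(R,C)$ is a genuine finite rank chart with a topological splitting $H=\operatorname{Ran}(C)\dotplus\operatorname{Ker}(R)$.
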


\begin{proof}
	Immediately follows from \Cref{lem:tilted_minor_LHS} and
	\Cref{thm:finite_GBO}.
\end{proof}

\begin{remark}
	Setting $\xi_{j}=\theta_{i}=1$ for all $i,j$
	recovers the BOGC identity (\Cref{cor:BOGC}) from \Cref{thm:tilted_chart}.
	Moreover, the pure shift tilts $\xi_{j}(z)=z^{a_{j}}$,
	$\theta_{i}(z)=z^{-b_{i}}$ for nonnegative integers $a_{j},b_{i}$ recovers
	the shifted Toeplitz minors of \cite{BumpDiaconis2002}: the matrix entries
	in \Cref{def:tilted_minor} reduce to $\widehat\varphi(p_{i}-q_{j})$, where
	$p_{i}\coloneqq i+b_{i}$ and $q_{j}\coloneqq j+a_{j}$.
\end{remark}

\subsection{Properties of the tilted kernel}
\label{sub:properties_of_tilted_kernel}

The kernel $\Pi_{V}K$ in the tilted Fredholm determinant in
\eqref{eq:tilted_finite_GBO} differs from the underlying operator
$K=\mathcal H(b)\mathcal H(\widetilde c)$ \eqref{eq:K_def} of 
the BOGC identity (\Cref{cor:BOGC})
by an
operator of rank at most $N$.
Indeed, since $\Pi_{V}=I-C\Gamma_{\xi,\theta}^{-1}R$, we have
\begin{equation*}
	\Pi_{V}K=K-C\,\Gamma_{\xi,\theta}^{-1}\,R\,K.
\end{equation*}
Since
$C\,\Gamma_{\xi,\theta}^{-1}\,R\,K$
factors through $\mathbb C^{N}$, it has rank at
most $N$.

The kernel $\Pi_VK|_{\operatorname{Ker}(R)}$ 
lives on the oblique tail space $\operatorname{Ker}(R)$,
which depends on the tilts $\xi_j$, $\theta_i$ and, 
as a result, 
may nontrivially depend on $N$.
In the BOGC identity, the tail space
$Q_NH=\overline{\operatorname{span}}(e_N,e_{N+1},\ldots)$ is canonical.
Let us record a version of our tilted identity 
when the oblique tail space is brought back to the canonical form $Q_NH$
by a change of basis.

For shorter notation,
write $P=P_N$, $Q=Q_N$, and set
\begin{equation*}
	B\coloneqq RP|_{PH}\colon PH\longrightarrow \mathbb C^N.
\end{equation*}
When $B$ is invertible, we view $B^{-1}$ as a map $\mathbb C^N\to PH$ and
define the graph parametrization
\begin{equation*}
	\mathcal J_N\colon QH\longrightarrow H,
	\qquad \mathcal J_N y\coloneqq y-B^{-1}Ry,
\end{equation*}
where the second term lies in $PH$.
Equivalently,
\begin{equation*}
	\mathcal J_N=(Q-PB^{-1}RQ)|_{QH}.
\end{equation*}
On $QH$ set
\begin{equation}
	\label{eq:fixed_tail_kernel}
	\mathsf K_N^{\xi,\theta}
	\coloneqq Q\bigl(I-C\Gamma_{\xi,\theta}^{-1}R\bigr)K\mathcal J_N
	\colon QH\to QH,
\end{equation}
and
\begin{equation}
	\label{eq:fixed_tail_correction}
	\mathsf F_N^{\xi,\theta}
	\coloneqq -QKP B^{-1}RQ-QC\Gamma_{\xi,\theta}^{-1}RK\mathcal J_N.
\end{equation}

\begin{proposition}
\label{prop:fixed_tail_tilted} Keep the assumptions of
	\Cref{thm:tilted_chart}, and assume that $B$ is invertible.
	Then $\mathcal J_N$ is a bounded isomorphism from $QH$ onto
	$\operatorname{Ker}(R)$, with inverse $Q|_{\operatorname{Ker}(R)}$.
	Consequently,
	\begin{equation}
		\label{eq:fixed_tail_tilted_GBO} D_N^{\xi,\theta}(\varphi)
		=G(\varphi)^N\cdot Z\cdot\Det_{\mathbb C^N}(\Gamma_{\xi,\theta})
		\cdot\Det_{Q_NH}\bigl(I_{Q_NH}-\mathsf K_N^{\xi,\theta}\bigr).
	\end{equation}
	Moreover, as
	operators $QH\to QH$, we have
	\begin{equation}
		\label{eq:fixed_tail_kernel_expansion}
		\mathsf K_N^{\xi,\theta}=QKQ-QKP B^{-1}RQ
		-QC\Gamma_{\xi,\theta}^{-1}RK\mathcal J_N
		=QKQ+\mathsf F_N^{\xi,\theta}.
	\end{equation}
	In particular, the rank of the correction $\mathsf F_N^{\xi,\theta}$ 
	between the tilted kernel $\mathsf K_N^{\xi,\theta}$ and the BOGC kernel $QKQ$ satisfies
	\begin{equation*}
		\operatorname{rank}\bigl(\mathsf K_N^{\xi,\theta}-QKQ\bigr)
		\le \operatorname{rank}(RQ)+\operatorname{rank}(QC).
	\end{equation*}
\end{proposition}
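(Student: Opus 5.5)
The argument has three parts: show that $\mathcal J_N$ is an isomorphism onto $\operatorname{Ker}(R)$, transport the Fredholm determinant of \Cref{thm:tilted_chart} along it, and then expand the transported kernel.

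\emph{Step 1: $\mathcal J_N$ is an isomorphism.} For $y\in QH$ we compute
\[
R\mathcal J_Ny=Ry-RPB^{-1}Ry=Ry-BB^{-1}Ry=0,
\]
because $B^{-1}Ry\in PH$ and $RP|_{PH}=B$. Hence $\mathcal J_N$ maps $QH$ into $\operatorname{Ker}(R)$. It is bounded, since $R$ is bounded and $B^{-1}$ is a finite matrix. We also have $Q\mathcal J_Ny=y$, so $Q|_{\operatorname{Ker}(R)}\circ\mathcal J_N=I_{QH}$.

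Conversely, let $x\in\operatorname{Ker}(R)$ and write $x=Px+Qx$. Then $0=Rx=BPx+RQx$, so $Px=-B^{-1}RQx$. This gives $x=\mathcal J_N(Qx)$, i.e.\ $\mathcal J_N\circ Q|_{\operatorname{Ker}(R)}=I_{\operatorname{Ker}(R)}$. So $\mathcal J_N$ is a bounded bijection with bounded inverse $Q|_{\operatorname{Ker}(R)}$.

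\emph{Step 2: transporting the determinant.} Fredholm determinants are invariant under conjugation by bounded isomorphisms between Banach spaces, by the same argument as in the proof of \Cref{lem:block_triangular_det}. Therefore
\[
\Det_{\operatorname{Ker}(R)}\bigl(I-\Pi_VK|_{\operatorname{Ker}(R)}\bigr)
=\Det_{QH}\bigl(I_{QH}-Q\,\Pi_VK\,\mathcal J_N\bigr).
\]
Here we conjugate by $\mathcal J_N$, using that $\Pi_VK$ maps into $\operatorname{Ker}(R)$ and that the inverse of $\mathcal J_N$ is $Q$.

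The transported operator is exactly $\mathsf K_N^{\xi,\theta}$ from \eqref{eq:fixed_tail_kernel}, since $\Pi_V=I-C\Gamma_{\xi,\theta}^{-1}R$. It is trace class because $K$ is trace class and the other factors are bounded. Substituting into \Cref{thm:tilted_chart} gives \eqref{eq:fixed_tail_tilted_GBO}.

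\emph{Step 3: the kernel expansion.} Expand $\Pi_V=I-C\Gamma_{\xi,\theta}^{-1}R$ in \eqref{eq:fixed_tail_kernel}, giving
\[
\mathsf K_N^{\xi,\theta}=QK\mathcal J_N-QC\Gamma_{\xi,\theta}^{-1}RK\mathcal J_N .
\]
On $QH$ we have $\mathcal J_N=Q-PB^{-1}RQ$, so the first term is
\[
QK\mathcal J_N=QKQ-QKPB^{-1}RQ .
\]
This yields \eqref{eq:fixed_tail_kernel_expansion}.

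For the rank bound:
\begin{itemize}
\item the term $QKPB^{-1}RQ$ factors through $RQ$, so its rank is at most $\operatorname{rank}(RQ)$;
\item the term $QC\Gamma_{\xi,\theta}^{-1}RK\mathcal J_N$ factors through $QC$, so its rank is at most $\operatorname{rank}(QC)$.
\end{itemize}
Subadditivity of rank then gives the claimed bound on $\operatorname{rank}(\mathsf K_N^{\xi,\theta}-QKQ)$.

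The main point needing care is Step 2: conjugation invariance on a non-orthogonal subspace. It is handled exactly as in \Cref{lem:block_triangular_det}, by finite-rank approximation of the trace-class part together with continuity of $\Det$ in $\mathfrak S_1$.
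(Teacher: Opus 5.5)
Your proposal is correct and follows the paper's proof essentially step for step. It uses the same two-sided check that $\mathcal J_N$ and $Q|_{\operatorname{Ker}(R)}$ are mutually inverse, the same conjugation of the Fredholm determinant from \Cref{thm:tilted_chart} via $Q\mathcal J_N=I_{QH}$, and the same substitution of $\Pi_V=I-C\Gamma_{\xi,\theta}^{-1}R$ and $\mathcal J_N=Q-PB^{-1}RQ$, with the rank bound obtained by factoring through $RQ$ and $QC$. Your only addition is a brief justification of conjugation invariance on the oblique subspace, which the paper simply cites as standard.
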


\begin{proof}
	For $y\in QH$, we have
	\begin{equation*}
		R\mathcal J_Ny=Ry-RB^{-1}Ry=Ry-BB^{-1}Ry=0,
	\end{equation*}
	so $\mathcal J_N(QH)\subseteq\operatorname{Ker}(R)$.
	Conversely, if $v\in\operatorname{Ker}(R)$, then
	$v=Pv+Qv$ and
	\begin{equation*}
		0=Rv=R(Pv)+R(Qv)=B(Pv)+R(Qv).
	\end{equation*}
	Hence $Pv=-B^{-1}R(Qv)$, and therefore
	$v=\mathcal J_N(Qv)$.
	This proves that $\mathcal J_N\colon QH\to\operatorname{Ker}(R)$ is bijective,
	and its inverse is $Q|_{\operatorname{Ker}(R)}$.

	Conjugating the operator
	$I_{\operatorname{Ker}(R)}-\Pi_VK|_{\operatorname{Ker}(R)}$ by this
	isomorphism gives, on $QH$,
	\begin{equation*}
		Q\bigl(I_{\operatorname{Ker}(R)}-\Pi_VK|_{\operatorname{Ker}(R)}\bigr)\mathcal J_N
		=I_{QH}-Q\Pi_VK\mathcal J_N,
	\end{equation*}
	because $Q\mathcal J_N=I_{QH}$.
	The operator $Q\Pi_VK\mathcal J_N$ is trace class on $QH$, since $K$ is
	trace class on $H$ and all other maps in the composition are bounded.
	Since Fredholm determinants are invariant under bounded conjugation,
	\eqref{eq:fixed_tail_tilted_GBO} follows from \Cref{thm:tilted_chart}.

	The expansion \eqref{eq:fixed_tail_kernel_expansion} follows by substituting
	$\Pi_V=I-C\Gamma_{\xi,\theta}^{-1}R$ and
	$\mathcal J_N=Q-PB^{-1}RQ$ into \eqref{eq:fixed_tail_kernel}.
	The first correction term in \eqref{eq:fixed_tail_correction} factors through
	$RQ\colon QH\to\mathbb C^N$, while the second factors through
	$QC\colon\mathbb C^N\to QH$; this yields the rank bound.
\end{proof}

\begin{proposition}
	\label{prop:fixed_tail_bounded_degree}
	Assume the hypotheses of \Cref{prop:fixed_tail_tilted}. If the tilts are
	polynomial of uniformly bounded degrees,
	\begin{equation*}
		\deg \xi_j\le d_\xi,
		\qquad \deg_{z^{-1}}\theta_i\le d_\theta,
	\end{equation*}
	then
	\begin{equation*}
		\operatorname{rank}\bigl(\mathsf K_N^{\xi,\theta}-QKQ\bigr)
		\le d_\xi+d_\theta.
	\end{equation*}
\end{proposition}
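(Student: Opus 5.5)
The plan is to apply the rank bound already proved in \Cref{prop:fixed_tail_tilted},
\begin{equation*}
	\operatorname{rank}\bigl(\mathsf K_N^{\xi,\theta}-QKQ\bigr)
	\le \operatorname{rank}(RQ)+\operatorname{rank}(QC),
\end{equation*}
and then show separately that $\operatorname{rank}(QC)\le d_\xi$ and $\operatorname{rank}(RQ)\le d_\theta$. Both bounds come from band (support) considerations. The only inputs are the triangularity of $T(\varphi_{\pm})$ and the finite Fourier support of polynomial tilts. No analytic estimate is needed, since \Cref{prop:fixed_tail_tilted} already supplies the factorization of the correction.

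\emph{Column side.} For $1\le j\le N$, \eqref{eq:Xi_def} with $\deg\xi_j\le d_\xi$ gives
\begin{equation*}
	\Xi P_N e_{j-1}\in\operatorname{span}(e_{j-1},\dots,e_{j-1+d_\xi}).
\end{equation*}
Since $\varphi_-$ has only nonpositive Fourier modes, $T(\varphi_-)$ is upper triangular: it maps $\operatorname{span}(e_0,\dots,e_k)$ into itself. Hence the $j$-th column $Ce_{j-1}=T(\varphi_-)\Xi e_{j-1}$ lies in $\operatorname{span}(e_0,\dots,e_{j-1+d_\xi})$. Applying $Q=Q_N$ kills this column unless $j-1+d_\xi\ge N$, that is, unless $j\ge N-d_\xi+1$. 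So at most $d_\xi$ columns of $QC$ are nonzero, and $\operatorname{rank}(QC)\le d_\xi$.

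\emph{Row side.} By \eqref{eq:Theta_def} with $\deg_{z^{-1}}\theta_i\le d_\theta$, the $i$-th row functional of $\Theta$ involves only the coordinates $h_{i-1},\dots,h_{i-1+d_\theta}$. Since $\varphi_+$ has only nonnegative modes, $T(\varphi_+)$ is lower triangular: the coordinate $(T(\varphi_+)h)_\ell$ depends only on $h_0,\dots,h_\ell$. So the $i$-th row of $R=\Theta T(\varphi_+)$ depends only on $h_0,\dots,h_{i-1+d_\theta}$. Restricted to $QH$, this row therefore vanishes unless $i-1+d_\theta\ge N$. At most $d_\theta$ rows of $RQ$ survive, giving $\operatorname{rank}(RQ)\le d_\theta$.

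Combining the two bounds with the rank inequality from \Cref{prop:fixed_tail_tilted} gives the claim. I do not expect a genuine obstacle. The one point needing care is the orientation of triangularity: upper for $T(\varphi_-)$ acting on columns, lower for $T(\varphi_+)$ read through row functionals. These orientations must match the index conventions $T(\varphi)_{ij}=\widehat\varphi(i-j)$ and the direction of the shifts in \eqref{eq:Xi_def}--\eqref{eq:Theta_def}, which is where an off-by-one or reversed-band error could slip in.
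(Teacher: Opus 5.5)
Your proof is correct and follows the paper's argument. Both reduce to $\operatorname{rank}(RQ)+\operatorname{rank}(QC)$ via \Cref{prop:fixed_tail_tilted}, then bound each term using the triangularity of $T(\varphi_{\pm})$ and the finite Fourier support of the polynomial tilts; the only difference is bookkeeping, since you count surviving columns of $QC$ and rows of $RQ$ while the paper notes that $QC$ has image in $\operatorname{span}(e_N,\dots,e_{N+d_\xi-1})$ and that $RQ$ only sees the coordinates $e_N,\dots,e_{N+d_\theta-1}$ of $QH$.
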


For $\xi_j=\theta_i=1$, this correction vanishes because
$d_\xi=d_\theta=0$,
and so
we have
$\mathsf K_N^{\mathbf 1,\mathbf 1}=QKQ$, the original BOGC kernel.

\begin{proof}[Proof of \Cref{prop:fixed_tail_bounded_degree}]
	Because $T(\varphi_+)$ is lower triangular,
	$T(\varphi_+)QH\subseteq QH$.
	If each $\theta_i$ has degree at most $d_\theta$ in $z^{-1}$, then $\Theta$
	restricted to $QH$ only sees the coordinates $e_N,\dots,e_{N+d_\theta-1}$.
	Thus $RQ=\Theta T(\varphi_+)Q$ factors through a space of dimension
	$d_\theta$, so $\operatorname{rank}(RQ)\le d_\theta$.
	Similarly, $\Xi P_N$ has image in
	$\operatorname{span}(e_0,\dots,e_{N+d_\xi-1})$, and the upper triangularity of
	$T(\varphi_-)$ preserves this finite span.
	Therefore, $QC$ has image in
	$\operatorname{span}(e_N,\dots,e_{N+d_\xi-1})$, and
	$\operatorname{rank}(QC)\le d_\xi$.
	Combining these two estimates with the 
	last statement of \Cref{prop:fixed_tail_tilted} yields the desired rank bound.
\end{proof}

\begin{remark}
\label{rmk:column_only_tilt}
	In the column-only case $\theta_i\equiv 1$, identify $\mathbb C^N$ with
	$PH$.  Then $R=PT(\varphi_+)$, and the lower triangularity of
	$T(\varphi_+)$ gives $RQ=PT(\varphi_+)Q=0$.  Since
	$PT(\varphi_+)P|_{PH}$ is triangular with nonzero diagonal, it is invertible;
	hence $\operatorname{Ker}(R)=QH$ and $\mathcal J_N$ is the identity map on
	$QH$.  Thus the fixed-tail conjugation is trivial on the $R$-side, and
	\begin{equation*}
		\mathsf K_N^{\xi,\mathbf 1}
		=QKQ-QC\Gamma_{\xi,\mathbf 1}^{-1}RKQ.
	\end{equation*}
	If only $k$ of the tilts $\xi_j$ differ from $1$,
	then the correction
	$QC=QT(\varphi_-)(\Xi P-P)$ has rank at most $k$.
\end{remark}

\begin{remark}
	\label{rmk:tilted_chart_N1} The preceding formulas take a transparent form
	for a rank-one chart.  Take a column vector $\alpha\in H$ and a row covector
	$\beta^{*}\in H^{*}$, and interpret them as
	$C=\alpha\colon \mathbb C\to H$ and $R=\beta^{*}\colon H\to\mathbb C$.
	Assume that $\beta^{*}\alpha\ne0$, and write
	$\beta^{\perp}\coloneqq\ker\beta^{*}$. Then $\Gamma=\beta^{*}\alpha$, and the
	notation \eqref{eq:J_def} becomes
	\begin{equation*}
		J=\frac{\alpha}{\beta^{*}\alpha}, \qquad
		\Pi_{U}=\frac{\alpha\beta^{*}}{\beta^{*}\alpha}, \qquad
		\Pi_{V}=I-\frac{\alpha\beta^{*}}{\beta^{*}\alpha}.
	\end{equation*}
	The rank-one identity is
	\begin{equation*}
		\beta^{*}A^{-1}\alpha =Z\,\beta^{*}\alpha\cdot
		\Det_{\beta^{\perp}}\bigl(I_{\beta^{\perp}}-
		\Pi_{V}K|_{\beta^{\perp}}\bigr).
	\end{equation*}
	Varying $\alpha$ and $\beta$, one recovers the
	matrix coefficients of the inverse operator $A^{-1}$ through these
	codimension-one Fredholm determinants.

	If, in addition, $\beta^{*}e_0\ne0$, the fixed-tail change of basis above is
	available.  It is
	\begin{equation*}
		\mathcal J_1\colon Q_1H\longrightarrow H,
		\qquad
		\mathcal J_1y=y-\frac{\beta^{*}y}{\beta^{*}e_0}\,e_0,
	\end{equation*}
	and it maps $Q_1H$ isomorphically onto $\beta^{\perp}$, with inverse
	$Q_1|_{\beta^{\perp}}$.  The fixed-tail kernel is therefore
	\begin{equation*}
		\mathsf K_1^{\alpha,\beta}
		\coloneqq Q_1\left(I-\frac{\alpha\beta^{*}}{\beta^{*}\alpha}\right)
		K\mathcal J_1\colon Q_1H\to Q_1H,
	\end{equation*}
	and the determinant becomes
	\begin{equation*}
		\beta^{*}A^{-1}\alpha
		=Z\,\beta^{*}\alpha\cdot
		\Det_{Q_1H}\bigl(I_{Q_1H}-\mathsf K_1^{\alpha,\beta}\bigr).
	\end{equation*}
	In this form the finite-rank correction is explicit:
	\begin{equation*}
		\mathsf K_1^{\alpha,\beta}=Q_1KQ_1
		-\frac{Q_1Ke_0\,\beta^{*}|_{Q_1H}}{\beta^{*}e_0}
		-\frac{Q_1\alpha\,\beta^{*}K\mathcal J_1}{\beta^{*}\alpha}.
	\end{equation*}
	Thus, the difference from the ordinary tail kernel $Q_1KQ_1$ is the sum of at
	most two rank-one operators on $Q_1H$.
\end{remark}

\subsection{Bialternant form of one-sided tilted Toeplitz minors}
\label{sub:bialternant_rational_cauchy}

Throughout this subsection we write $D_{N}^{\xi,\mathbf 1}$ and
$D_{N}^{\mathbf 1,\theta}$ for the column-only and row-only specializations of
the tilted Toeplitz minor of \Cref{def:tilted_minor}, and we assume
$G(\varphi)=1$ in each case.

\begin{definition}[Rank-$N$ rational Wiener--Hopf factors]
	\label{def:rank_N_rational_minus} \label{def:rank_N_rational_plus} Fix
	tuples $X=(x_{1},\dots,x_{N})$ and $Y=(y_{1},\dots,y_{N})$ of pairwise
	distinct complex numbers with $|x_{k}|<1$ and $|y_{l}|<1$.
	We say a symbol $\varphi$ has rank-$N$ rational factor $\varphi_{+}$ or
	$\varphi_-$ if, respectively,
	\begin{equation*}
		\varphi_{+}(z)=\prod_{k=1}^{N}\frac{1}{1-x_{k}z}, \quad \text{or}\quad
		\varphi_{-}(z)=\prod_{l=1}^{N}\frac{1}{1-y_{l}/z}.
	\end{equation*}
	When one of these rational conditions is imposed, the other factor is
	arbitrary, subject to the usual assumptions of
	\Cref{sub:hardy_setup} on the full symbol $\varphi$.
\end{definition}

\begin{theorem}[Bialternant factorization, column-side]
	\label{thm:bialternant_column} Let $\varphi$ have rank-$N$ rational
	$\varphi_{-}$ with alphabet $Y$ as in \Cref{def:rank_N_rational_minus}.
	Let $\xi_{1},\dots,\xi_{N}$ be analytic in a neighborhood of $|z|<1$.
	Then the column-only tilted Toeplitz minor factorizes as
	\begin{equation}
		\label{eq:bialternant_column} D_{N}^{\xi,\mathbf 1}(\varphi) =\mathcal
		S_{\xi}(Y)\cdot\prod_{l=1}^{N}\varphi_{+}(y_{l}), \qquad \mathcal
		S_{\xi}(Y)\coloneqq
		\frac{\det\bigl[y_{i}^{N-j}\,\xi_{N-j+1}(y_{i})\bigr]_{i,j=1}^{N}}{\Delta(Y)},
	\end{equation}
	where $\Delta(Y)=\det[y_{i}^{N-j}]_{i,j=1}^{N}=\prod_{i<j}(y_{i}-y_{j})$ is
	the Vandermonde determinant of $Y$.
\end{theorem}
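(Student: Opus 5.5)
The plan is to compute each entry of the column-tilted matrix by residues and then factor the resulting matrix as a product of two $N\times N$ matrices, so that Cauchy--Binet is not even needed: a plain product of determinants suffices. With $G(\varphi)=1$ and $\theta_i\equiv1$, the $(i,j)$ entry is
\begin{equation*}
	(\xi_j\varphi)_{i-j}=\oint_{|z|=1} z^{j-i}\,\xi_j(z)\varphi_+(z)\varphi_-(z)\,\frac{dz}{2\pi i z}
	=\oint_{|z|=1}\frac{z^{N+j-i-1}\,\xi_j(z)\varphi_+(z)}{\prod_{l=1}^N(z-y_l)}\,\frac{dz}{2\pi i},
\end{equation*}
using $\varphi_-(z)=\prod_l z/(z-y_l)$. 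Since $1\le i,j\le N$, the exponent $N+j-i-1$ is nonnegative. The functions $\xi_j$ and $\varphi_+$ are analytic in $|z|<1$; $\varphi_+$ is continuous up to $\mathbb T$ by the H\"older standing assumption, and $\xi_j$ is analytic in a neighborhood of the closed disk. Hence the only singularities inside the contour are the simple poles at the distinct points $y_l$.

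To justify the residue computation rigorously, I would first integrate over $|z|=r$ with $\max_l|y_l|<r<1$ and then let $r\to1$ using continuity of $\varphi_+$ on the closed disk. This boundary regularity is the only mildly delicate point of the argument, and it is handled by dominated convergence. The residue theorem then gives
\begin{equation*}
	(\xi_j\varphi)_{i-j}=\sum_{l=1}^N a_{il}\,b_{lj},\qquad
	a_{il}\coloneqq\frac{y_l^{N-i}}{\prod_{m\ne l}(y_l-y_m)},\qquad
	b_{lj}\coloneqq y_l^{\,j-1}\,\xi_j(y_l)\,\varphi_+(y_l).
\end{equation*}
Thus the matrix in question equals $\mathsf A\mathsf B$ with both factors of size $N\times N$, and $D_N^{\xi,\mathbf 1}(\varphi)=\det\mathsf A\cdot\det\mathsf B$.

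For the first factor, pull the column scalars out of $\mathsf A$ and observe that $\det[y_l^{N-i}]_{i,l}=\Delta(Y)$. The column scalars contribute $\prod_l\prod_{m\neq l}(y_l-y_m)^{-1}=(-1)^{N(N-1)/2}\Delta(Y)^{-2}$, so $\det\mathsf A=(-1)^{N(N-1)/2}\Delta(Y)^{-1}$. For the second factor, pull $\prod_l\varphi_+(y_l)$ out of the rows of $\mathsf B$. Reversing the column order $j\mapsto N-j+1$ turns $\det[y_l^{j-1}\xi_j(y_l)]$ into $\det[y_i^{N-j}\xi_{N-j+1}(y_i)]$ at the cost of another sign $(-1)^{N(N-1)/2}$. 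The two signs cancel, which yields exactly \eqref{eq:bialternant_column}.

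The only real bookkeeping risk is tracking these two signs, and I would double-check them at $N=2$. Everything else is a routine finite computation, so no earlier operator-theoretic result (in particular neither \Cref{thm:finite_GBO} nor \Cref{thm:tilted_chart}) is needed.
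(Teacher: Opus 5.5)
Your proposal is correct and follows essentially the same route as the paper: you evaluate each entry by residues at the simple poles $y_\ell$, factor the matrix as a Vandermonde-type factor times a tilted Vandermonde-type factor, and cancel the two signs $(-1)^{N(N-1)/2}$. The differences are cosmetic. The paper keeps the diagonal factor $\varphi_{+}(y_\ell)/\prod_{m\ne\ell}(y_\ell-y_m)$ as a separate middle matrix instead of splitting it between your $\mathsf A$ and $\mathsf B$. It also writes the entry directly as an integral over $|z|=\rho<1$, whereas you spell out the limiting argument from $|z|=1$.
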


\begin{proof}
	Set $P(z)\coloneqq\prod_{\ell=1}^{N}(z-y_{\ell})$.
	Since
	\begin{equation*}
		\varphi_{-}(z)=\prod_{\ell=1}^{N}\frac{1}{1-y_{\ell}/z}=\frac{z^{N}}{P(z)},
	\end{equation*}
	we have $\varphi(z)=\varphi_{+}(z)\,z^{N}/P(z)$ under the normalization
	$G(\varphi)=1$.
	The $(i,j)$-entry of the tilted Toeplitz minor is the Fourier coefficient
	\begin{equation*}
		M_{ij}=\frac{1}{2\pi
		i}\oint_{|z|=\rho}\xi_{j}(z)\varphi(z)\,z^{j-i-1}\,dz =\frac{1}{2\pi
		i}\oint_{|z|=\rho}\xi_{j}(z)\varphi_{+}(z)\,
		\frac{z^{N+j-i-1}}{P(z)}\,dz,
	\end{equation*}
	where we choose $\max_{\ell}|y_{\ell}|<\rho<1$, so $\xi_{j}\varphi_{+}$ is
	analytic inside the contour.
	Since $1\le i,j\le N$, the exponent $N+j-i-1\ge 0$, so the integrand has no
	pole at $z=0$; the only poles inside $|z|=\rho$ are the simple poles at
	$z=y_{\ell}$.
	By the residue theorem,
	\begin{equation*}
		M_{ij}=\sum_{\ell=1}^{N}\xi_{j}(y_{\ell})\,\varphi_{+}(y_{\ell})\,
		\frac{y_{\ell}^{N+j-i-1}}{P'(y_{\ell})}
		=\sum_{\ell=1}^{N}y_{\ell}^{N-i}\cdot
		\frac{\varphi_{+}(y_{\ell})}{P'(y_{\ell})}\cdot
		y_{\ell}^{j-1}\xi_{j}(y_{\ell}).
	\end{equation*}
	This implies the decomposition $M=ADB_{\xi}$ with
	$A_{i\ell}=y_{\ell}^{N-i}$,
	$D=\operatorname{diag}\bigl(\varphi_{+}(y_{\ell})/P'(y_{\ell})\bigr)$,
	$(B_{\xi})_{\ell j}=y_{\ell}^{j-1}\xi_{j}(y_{\ell})$.
	The first factor gives $\Delta(Y)$ in the numerator, but
	$\prod_{\ell}P'(y_{\ell}) =(-1)^{N(N-1)/2}\Delta(Y)^{2}$, which leaves a
	single Vandermonde in the denominator.
	The determinant of $B_\xi$ becomes, up to the sign $(-1)^{N(N-1)/2}$ from
	the denominator, the numerator in the bialternant $\mathcal S_{\xi}(Y)$.
	This completes the proof.
\end{proof}

\begin{corollary}
	\label{cor:DN_rank_N_rational_minus} Specializing $\xi_{j}\equiv 1$ in
	\Cref{thm:bialternant_column} gives unshifted Toeplitz minor
	$D_{N}(\varphi)=\prod_{l=1}^{N}\varphi_{+}(y_{l})$, so that
	\eqref{eq:bialternant_column} can be rewritten as
	$D_{N}^{\xi,\mathbf 1}(\varphi)=\mathcal S_{\xi}(Y)\cdot D_{N}(\varphi)$.
\end{corollary}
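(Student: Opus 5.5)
The plan is to specialize \Cref{thm:bialternant_column} to $\xi_j\equiv 1$ for all $j$ and then identify the resulting bialternant with $1$. With $\xi_j\equiv1$, the tilted minor $D_N^{\mathbf 1,\mathbf 1}(\varphi)$ is by \Cref{def:tilted_minor} the ordinary Toeplitz determinant $\det[\widehat\varphi(i-j)]_{i,j=1}^N=D_N(\varphi)$. The index shift from $0,\dots,N-1$ to $1,\dots,N$ does not change the entries $\widehat\varphi(i-j)$, so this identification is immediate.

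Next, evaluate $\mathcal S_{\mathbf 1}(Y)$. Its numerator is $\det[y_i^{N-j}]_{i,j=1}^N$, which is exactly $\Delta(Y)$ as defined in the statement of \Cref{thm:bialternant_column}. Hence $\mathcal S_{\mathbf 1}(Y)=1$, and \eqref{eq:bialternant_column} gives $D_N(\varphi)=\prod_{l=1}^N\varphi_+(y_l)$.

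For the rewriting, substitute this back into \eqref{eq:bialternant_column} for general $\xi$, which yields $D_N^{\xi,\mathbf 1}(\varphi)=\mathcal S_\xi(Y)\,D_N(\varphi)$. No step is a real obstacle.

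The only point needing care is the sign convention in the proof of \Cref{thm:bialternant_column}: the factor $(-1)^{N(N-1)/2}$ must be absorbed consistently. Specializing the theorem as stated, rather than its proof, sidesteps this, since the theorem's statement already fixes both numerator and denominator with the same ordering $y_i^{N-j}$. As a sanity check, the $N=1$ case reads $\widehat\varphi(0)=\varphi_+(y_1)$. This follows from the residue computation with $\varphi=\varphi_+ z/(z-y_1)$, where the residue at $z=y_1$ of $\varphi_+(z)/(z-y_1)$ is $\varphi_+(y_1)$.
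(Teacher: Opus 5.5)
Your proposal is correct and matches the paper, which gives no separate proof and treats this as an immediate specialization of the theorem: for $\xi_j\equiv 1$ the numerator of $\mathcal S_{\xi}(Y)$ is $\det[y_i^{N-j}]=\Delta(Y)$, so $\mathcal S_{\mathbf 1}(Y)=1$, while $D_N^{\mathbf 1,\mathbf 1}(\varphi)$ is literally $D_N(\varphi)$; substituting $D_N(\varphi)=\prod_l\varphi_+(y_l)$ back into the general formula gives the rewritten form. Your caution about the sign is unnecessary, since the two factors $(-1)^{N(N-1)/2}$ in the theorem's proof (one from $\prod_\ell P'(y_\ell)$, one from reversing the columns of $B_\xi$) cancel, so the theorem as stated is sign-correct.
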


\begin{remark}
	\label{rmk:bialternant_row} A similar bialternant factorization holds for
	the row-only tilted Toeplitz minor $D_{N}^{\mathbf 1,\theta}(\varphi)$ when
	$\varphi$ has rank-$N$ rational $\varphi_{+}$ with alphabet
	$X=(x_{1},\dots,x_{N})$.
	We omit the statement here, but only record the corresponding bialternant
	\begin{equation*}
		\mathcal S_{\theta}^{\#}(X)\coloneqq
		\frac{\det\bigl[x_{k}^{-(N-i)}\,\theta_{N-i+1}(1/x_{k})\bigr]_{i,k=1}^{N}}{\Delta(X^{-1})}.
	\end{equation*}
\end{remark}

\begin{remark}[Smaller alphabet on the residue side]
	\label{rmk:smaller_alphabet} The hypotheses of \Cref{thm:bialternant_column}
	require the alphabet $Y$ to have exactly $N$ entries.
	If $\varphi_{-}$ has $|Y|<N$ poles, formula \eqref{eq:bialternant_column}
	can be specialized to setting the extra entries of $Y$ to zero and using the
	l'H\^opital rule to cancel the resulting zeros in the numerator and
	denominator of $\mathcal S_{\xi}(Y)$.
\end{remark}

Let us record a special case of the bialternant expression
$\mathcal{S}_{\xi}(Y)$ arising in \Cref{thm:bialternant_column}.
First, if $\lambda=(\lambda_1\ge\cdots\ge\lambda_N\ge 0)$ is a partition with at
most $N$ parts, $\widehat\lambda_{j}=\lambda_{N+1-j}$, and the tilt functions
are binomials $\xi_{j}(z)=z^{\widehat\lambda_{j}}(1+\beta z)^{N-j}$, then
\begin{equation*}
	\mathcal S_{\xi}(Y) =\frac{\det\bigl[y_{i}^{\lambda_j+N-j}(1+\beta
	y_{i})^{j-1}\bigr]_{i,j=1}^{N}}{\Delta(Y)} \eqqcolon
	G_{\lambda}^{\beta}(y_1,\ldots,y_N ),
\end{equation*}
which is the symmetric Grothendieck polynomial,
\cite{lascoux1982structure},\cite{fomin1994grothendieck},\cite{buch2002littlewood},\cite{yeliussizov2015duality}.
Further taking $\beta=0$ recovers the Schur polynomial
$s_{\lambda}(y_1,\ldots,y_N )$.

An alternative choice
\begin{equation*}
	\xi_{j}(z)=(1+\beta z)^{\widehat\lambda_{j}}
\end{equation*}
yields a different Grothendieck-type bialternant which we denote as
\begin{equation*}
	\mathcal S_{\xi}(Y)
	=\frac{\det\bigl[y_{i}^{N-j}(1+\beta y_{i})^{\lambda_{j}}\bigr]_{i,j=1}^{N}}
	{\Delta(Y)} \eqqcolon \tilde G_{\lambda}^{\beta}(y_1,\ldots,y_N ).
\end{equation*}
This determinant essentially appears in \cite[(3.16)]{BaikLiaoLiu2026}, and is
related to the ``dual'' Grothendieck-type polynomials from
\cite{Motegi-Sakai13}.
Let us emphasize that the ``dual'' term used in \cite{Motegi-Sakai13} conflicts
with the more standard dual Grothendieck polynomials $g_\lambda$ arising from
the original ones through the Hall inner product on the algebra of symmetric
functions \cite{Macdonald1995}.
We refer to \cite{yeliussizov2015duality}, \cite{hwang2021refined} for details
on the $g_\lambda$'s and their combinatorial interpretations.

\subsection{Cauchy--Binet expansion}
\label{sub:cauchy_binet}

Here we expand the two-sided tilted Toeplitz minor $D_{N}^{\xi,\theta}(\varphi)$
into a restricted sum over partitions, generalizing Gessel's theorem
\cite{gessel1990symmetric} that was a key ingredient in the original proof of
the BOGC identity in \cite{BorodinOkounkov2000}.

Throughout this subsection we assume there exists $R>1$ such that
$\xi_{j}\varphi_{+}$ is holomorphic in $|z|<R$ for each $j$, and
$\theta_{i}\varphi_{-}$ is holomorphic in $|z|>R^{-1}$ and regular at $\infty$
for each $i$.
Expand
\begin{equation*}
 \xi_{j}(z)\,\varphi_{+}(z)=\sum_{r\ge
	0}a_{r}^{(j)}\,z^{r} \qquad \theta_{i}(z)\,\varphi_{-}(z)=\sum_{r\ge
	0}b_{r}^{(i)}\,z^{-r},
\end{equation*}
where $1\le i,j\le N$.
Set $a_{r}^{(j)}\coloneqq 0$, $b_{r}^{(i)}\coloneqq 0$ for $r<0$.
Denote the coefficient sequences by $\mathbf{a}^{(j)}=(a_{r}^{(j)})_{r\ge 0}$
and $\mathbf{b}^{(i)}=(b_{r}^{(i)})_{r\ge 0}$, and write
\begin{equation*}
	\mathbf a=(\mathbf a^{(1)},\dots,\mathbf a^{(N)}),\quad \mathbf b=(\mathbf
	b^{(1)},\dots,\mathbf b^{(N)}), \quad \mathbf a^{\leftarrow}=(\mathbf
	a^{(N)},\dots,\mathbf a^{(1)}), \quad \mathbf b^{\leftarrow}=(\mathbf
	b^{(N)},\dots,\mathbf b^{(1)}).
\end{equation*}

\begin{definition}
	\label{def:tilted_JT} Let
	$\mathbf c=(\mathbf c^{(1)},\dots,\mathbf c^{(N)})$ be a family of one-sided
	sequences $\mathbf c^{(i)}=(c_{r}^{(i)})_{r\ge 0}$ (with
	$c_{r}^{(i)}\coloneqq 0$ for $r<0$), and let
	$\mu=(\mu_{1}\ge\cdots\ge\mu_{N}\ge 0)$ be a partition with at most $N$
	parts.
	The associated \emph{Jacobi--Trudi type determinant} is
	\begin{equation*}
 \operatorname{JT}_{\mu}^{(N)}(\mathbf c)
		\coloneqq \det\bigl[c_{\mu_{j}-j+i}^{(i)}\bigr]_{i,j=1}^{N}.
	\end{equation*}
\end{definition}

\begin{theorem}[Cauchy--Binet expansion]
	\label{thm:tilted_CB} Under the above assumptions, we have the absolutely
	convergent identity
	\begin{equation}
		\label{eq:tilted_CB_partition} D_{N}^{\xi,\theta}(\varphi)
		=G(\varphi)^{N}\sum_{\mu\colon\ell(\mu)\le N}
		\operatorname{JT}_{\mu}^{(N)}(\mathbf a^{\leftarrow})\,
		\operatorname{JT}_{\mu}^{(N)}(\mathbf b^{\leftarrow}).
	\end{equation}
\end{theorem}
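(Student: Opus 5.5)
The plan is to factor the $N\times N$ matrix of the tilted minor as a product of an $N\times\infty$ and an $\infty\times N$ matrix, and then apply the Cauchy--Binet formula over $N$-element subsets of the inner index set. These subsets are then reparametrized by partitions $\mu$ with $\ell(\mu)\le N$.

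\textbf{Step 1 (factorization of entries).} Write $\varphi=G(\varphi)\,(\theta_i\varphi_-)\,(\xi_j\varphi_+)\,\theta_i^{-1}\xi_j^{-1}\cdot\theta_i\xi_j$, i.e.\ $\theta_i\xi_j\varphi=G(\varphi)\,(\theta_i\varphi_-)(\xi_j\varphi_+)$. By the assumed analyticity on the annulus $R^{-1}<|z|<R$, both Laurent series converge absolutely on $\mathbb T$. Hence the coefficient of $z^{i-j}$ is
\begin{equation*}
	\bigl(\theta_i\xi_j\varphi\bigr)_{i-j}
	=G(\varphi)\sum_{s-r=i-j}a^{(j)}_s\,b^{(i)}_r
	=G(\varphi)\sum_{k\ge 1} b^{(i)}_{k-i}\,a^{(j)}_{k-j},
\end{equation*}
where we set $s=k-j$ and $r=k-i$; the vanishing conventions for negative indices make the effective range $k\ge\max(i,j)$. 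Thus $M=G(\varphi)\,\mathsf B\mathsf A$ with $\mathsf B_{ik}=b^{(i)}_{k-i}$ and $\mathsf A_{kj}=a^{(j)}_{k-j}$, for $k\ge1$.

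\textbf{Step 2 (Cauchy--Binet and reindexing).} First apply finite Cauchy--Binet to the truncation $k\le L$. This gives a sum over $L\ge k_1>k_2>\dots>k_N\ge1$ of $\det[\mathsf B_{i,k_l}]\det[\mathsf A_{k_l,j}]$. Parametrize $k_l=\mu_l+N+1-l$; this is a bijection onto partitions with $\ell(\mu)\le N$. Reverse the row order $i\mapsto N+1-i$ in $\mathsf B$ and the column order in $\mathsf A$. The two signs $(-1)^{N(N-1)/2}$ cancel, and the entries become $b^{(N+1-i)}_{\mu_j-j+i}$ and $a^{(N+1-i)}_{\mu_j-j+i}$ after transposing $\mathsf A$. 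These are exactly $\operatorname{JT}^{(N)}_\mu(\mathbf b^{\leftarrow})$ and $\operatorname{JT}^{(N)}_\mu(\mathbf a^{\leftarrow})$.

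\textbf{Step 3 (convergence; the main point needing care).} Let $L\to\infty$. Analyticity beyond the unit circle gives geometric decay $|a^{(j)}_r|,|b^{(i)}_r|\le c\rho^{r}$ for some $\rho<1$. Hence the rows of $\mathsf B$ and the columns of $\mathsf A$ lie in $\ell^2(\mathbb Z_{\ge1})$, so the truncated products converge entrywise to $\mathsf B\mathsf A$. For absolute convergence of the partition sum, apply Cauchy--Schwarz followed by Cauchy--Binet for the Gram determinants:
\begin{equation*}
	\sum_{S}|\det\mathsf B_S|\,|\det\mathsf A_S|
	\le\Bigl(\sum_S|\det\mathsf B_S|^2\Bigr)^{1/2}\Bigl(\sum_S|\det\mathsf A_S|^2\Bigr)^{1/2}
	=\det(\mathsf B\mathsf B^*)^{1/2}\det(\mathsf A^*\mathsf A)^{1/2}<\infty .
\end{equation*}
The Gram identities themselves follow from truncation together with monotone convergence. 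Dominated convergence then justifies passing to the limit in the truncated identity, which yields \eqref{eq:tilted_CB_partition}. I expect the main obstacle to be only the careful index and sign bookkeeping in Step 2, namely matching the convention $c^{(i)}_{\mu_j-j+i}$ with reversed sequences; the analytic part is routine given the annulus hypothesis.
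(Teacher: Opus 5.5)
Your proposal is correct and follows the paper's proof step for step: the same factorization $M=G(\varphi)\,\mathsf B\mathsf A$ (your inner index $k$ is the paper's shifted by one), truncated Cauchy--Binet, and reindexing by partitions, with the two reversal signs cancelling. The only variation is in the convergence step: your Cauchy--Schwarz plus Gram-determinant bound $\sum_S|\det\mathsf B_S|^2=\det(\mathsf B\mathsf B^*)$ needs only $\ell^2$ coefficient sequences, so it would work under weaker hypotheses, whereas the paper uses Cauchy's estimates to bound each minor by a constant times $\rho^{-(k_1+\cdots+k_N)}$ with $\rho>1$, which in addition shows the summands decay exponentially.
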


\begin{proof}
	Set $A_{j}(z)\coloneqq\xi_{j}(z)\varphi_{+}(z)$ and
	$B_{i}(z)\coloneqq\theta_{i}(z)\varphi_{-}(z)$.
	We have
	\begin{equation*}
		[z^{i-j}]\,B_{i}(z)A_{j}(z) =\sum_{s\ge 0}b_{s}^{(i)}\,a_{s+i-j}^{(j)}
		=\sum_{k\ge 0}b_{k-i+1}^{(i)}\,a_{k-j+1}^{(j)} \qquad(k\coloneqq s+i-1),
	\end{equation*}
	using $a_{r}^{(j)}=b_{r}^{(i)}=0$ for $r<0$.
	Set $\mathsf B_{i,k}\coloneqq b_{k-i+1}^{(i)}$,
	$\mathsf A_{k,j}\coloneqq a_{k-j+1}^{(j)}$ ($k\ge 0$).
	Then the matrix of the tilted minor $D_{N}^{\xi,\theta}(\varphi)$ factors as
	$G(\varphi)\,\mathsf B\mathsf A$ with $\mathsf B$ of size $N\times\infty$
	and $\mathsf A$ of size $\infty\times N$.

	Cauchy--Binet applied to finite truncations $\mathsf B^{(M)}\mathsf A^{(M)}$
	in the $k$-index gives
	\begin{equation*}
		\det\bigl(\mathsf B^{(M)}\mathsf A^{(M)}\bigr) =\sum_{0\le
		k_{1}<\cdots<k_{N}\le M}
		\det\bigl[b_{k_{\ell}-i+1}^{(i)}\bigr]_{i,\ell=1}^{N}
		\det\bigl[a_{k_{\ell}-j+1}^{(j)}\bigr]_{\ell,j=1}^{N}.
	\end{equation*}
	Fix any $\rho\in(1,R)$.
	Cauchy's estimate on $|z|=\rho$ for $\xi_{j}\varphi_{+}$ and on
	$|z|=\rho^{-1}$ for $\theta_{i}\varphi_{-}$ (holomorphic at $\infty$ via
	$w=z^{-1}$) yields $|a_{r}^{(j)}|+|b_{r}^{(i)}|\le C\rho^{-r}$ for all
	$r\ge 0$, uniformly in $1\le i,j\le N$.
	Hence each $N\times N$ minor in the sum is bounded by
	$N!\,C^{N}\rho^{N(N-1)/2}\rho^{-(k_{1}+\cdots+k_{N})}$, and the product of
	the two minors on the right is $O(\rho^{-2(k_{1}+\cdots+k_{N})})$.
	Thus, the limit $M\to\infty$ exists and yields, after multiplication by
	$G(\varphi)^{N}$, the absolutely convergent identity
	\begin{equation*}
		D_{N}^{\xi,\theta}(\varphi) =G(\varphi)^{N} \sum_{0\le
		k_{1}<\cdots<k_{N}}
		\det\bigl[b_{k_{\ell}-i+1}^{(i)}\bigr]_{i,\ell=1}^{N}
		\det\bigl[a_{k_{\ell}-j+1}^{(j)}\bigr]_{\ell,j=1}^{N}.
	\end{equation*}
	Reparameterizing $k_{\ell}=\mu_{N-\ell+1}+\ell-1$ yields partitions $\mu$
	with at most $N$ parts, and the minors in the right-hand side above become
	the Jacobi--Trudi determinants
	$\operatorname{JT}_{\mu}^{(N)}(\mathbf b^{\leftarrow})$ and
	$\operatorname{JT}_{\mu}^{(N)}(\mathbf a^{\leftarrow})$.
	This completes the proof.
\end{proof}

\begin{remark}
\label{rmk:tilted_CB_biorthogonal_ensemble}
	The sum \eqref{eq:tilted_CB_partition} is a partition function of a
	$N$-point \emph{biorthogonal ensemble} in the sense of
	\cite{Borodin1998b}, since the Jacobi--Trudi determinants
	have the form
	\begin{equation*}
		\operatorname{JT}_{\mu}^{(N)}(\mathbf c) =
		\det[f_{i}(m_j)]_{i,j=1}^{N}, \qquad f_{i}(r)=c_{r+i-N}^{(i)},
		\qquad m_j=\mu_j+N-j,\ j=1,\ldots,N.
	\end{equation*}
\end{remark}

\begin{remark}
\label{rmk:not_GP_grothendieck}
	The biorthogonal weight in \eqref{eq:tilted_CB_partition} should be
	distinguished from the Grothendieck measure on partitions
	(and from the tilted biorthogonal ensemble) introduced in
	\cite{GavrilovaPetrov2023_Groth}.
	In that construction, each particle $j$ carries a finite-difference
	operator chain of length $j-1$, column-flagged by the particle position.
	Here the row tilt $\theta_{i}$ is absorbed into the row-indexed sequence
	$\mathbf b^{(i)}$ inside the Jacobi--Trudi determinant.
	The two ensembles share the feature that their probability weights are
	given by products of two determinants, but the tilt is placed in different
	data.
	It would be interesting to find a common generalization of the two objects.
\end{remark}

\subsection{Schur symbols}
\label{sub:schur_symbols}

Let us consider the case when the symbol $\varphi(z)$ 
corresponds to two generic Schur-positive 
specializations
$\rho^{\pm}=(\alpha^{\pm};\beta^{\pm};\gamma^{\pm})$
of the algebra of symmetric functions associated with the factors $\varphi_{+}$ and $\varphi_{-}$.
We refer to \cite{borodin2016representations} for background on Schur-positive specializations
and their connection to Toeplitz determinants and representations of the infinite symmetric group.
We set
\begin{equation*}
	\varphi(z)=e^{\gamma^+ z}\prod_{k=1}^{\infty}\frac{1+\beta_k^+ z}{1-\alpha_k^+ z}\cdot
	e^{\gamma^- z^{-1}}\prod_{l=1}^{\infty}\frac{1+\beta_l^- z^{-1}}{1-\alpha_l^- z^{-1}},
\end{equation*}
where
\begin{equation*}
	\sum_{k=1}^{\infty}(\alpha_k^+ +\beta_k^+) <\infty, \qquad
	\sum_{l=1}^{\infty}(\alpha_l^- +\beta_l^-) <\infty, \qquad
	\gamma^{+},\gamma^{-}\ge 0.
\end{equation*}

In the case of the trivial tilt $\xi_{j}=\theta_{i}=1$ for all $i,j$,
\Cref{thm:tilted_CB} reduces to Gessel's theorem \cite{gessel1990symmetric}.
Indeed, then the coefficients $a^{(i)}_{r},b^{(i)}_{r}$ 
are independent of $i$, and equal to the complete homogeneous symmetric functions
under the specializations $\rho^{\pm}$. 
\begin{corollary}[Gessel's theorem \cite{gessel1990symmetric}]
	\label{cor:CB_unshifted} With the above assumptions, we have the following
	identity for the non-tilted Toeplitz determinants:
	\begin{equation}
	\label{eq:CB_unshifted}
		D_{N}(\varphi)=\sum_{\mu\colon\ell(\mu)\le N} s_{\mu}(\rho^{+})\,s_{\mu}(\rho^{-}).
	\end{equation}
\end{corollary}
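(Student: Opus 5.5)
The plan is to derive \eqref{eq:CB_unshifted} as the special case $\xi_j=\theta_i\equiv1$ of \Cref{thm:tilted_CB}, and then to identify the two Jacobi--Trudi type determinants with Schur functions via the classical Jacobi--Trudi formula.

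First, check the hypotheses and normalization. With the displayed Schur-positive form of $\varphi$, the geometric mean $G(\varphi)$ equals $1$, since $\log\varphi$ has no constant Fourier mode. The canonical Wiener--Hopf factors are then
\[
\varphi_{+}(z)=e^{\gamma^{+}z}\prod_k\frac{1+\beta_k^{+}z}{1-\alpha_k^{+}z}=\sum_{r\ge0}h_r(\rho^{+})z^{r},
\qquad
\varphi_{-}(z)=\sum_{r\ge0}h_r(\rho^{-})z^{-r}.
\]
These are the standard generating functions of the complete homogeneous symmetric functions under the specializations $\rho^{\pm}$.

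The analyticity assumption of \Cref{thm:tilted_CB} requires $\varphi_{+}$ to be holomorphic in some disc $|z|<R$ with $R>1$, and $\varphi_{-}$ likewise in $|z|>R^{-1}$. Since the $\alpha_k^{\pm}\to0$ are summable, this holds as soon as $\sup_k\alpha_k^{\pm}<1$. I read this as part of ``generic'' and would state it explicitly; the Hölder/zero-winding standing assumptions also follow from it. If one only wants $\le1$, I would argue by continuity in the parameters, since both sides are continuous on the region of absolute convergence. This is the only real subtlety.

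Next, substitute into \Cref{thm:tilted_CB}. Now $a^{(j)}_r=h_r(\rho^{+})$ and $b^{(i)}_r=h_r(\rho^{-})$, independent of $i$ and $j$, so the reversal in $\mathbf a^{\leftarrow}$ and $\mathbf b^{\leftarrow}$ is immaterial. This gives
\[
\operatorname{JT}^{(N)}_\mu(\mathbf a^{\leftarrow})=\det\bigl[h_{\mu_j-j+i}(\rho^{+})\bigr]_{i,j=1}^{N}.
\]
This is the transpose of the Jacobi--Trudi matrix $\det[h_{\mu_i-i+j}]$, so it equals $s_\mu(\rho^{+})$. This holds first as an identity in the algebra of symmetric functions and then after applying the specialization homomorphism. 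The same argument gives $\operatorname{JT}^{(N)}_\mu(\mathbf b^{\leftarrow})=s_\mu(\rho^{-})$.

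Finally, with $G(\varphi)=1$, the left side of \Cref{thm:tilted_CB} is $D_N^{\mathbf1,\mathbf1}(\varphi)=D_N(\varphi)$. The entries there are $\widehat\varphi(i-j)$ with indices $1,\dots,N$, the same matrix as before. Substituting the two identifications yields \eqref{eq:CB_unshifted}, with absolute convergence inherited from the theorem.
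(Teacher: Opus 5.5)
Your proposal is correct and is exactly the paper's argument: specialize \Cref{thm:tilted_CB} to $\xi_j=\theta_i\equiv1$, where $G(\varphi)=1$ and $a^{(j)}_r=h_r(\rho^{+})$, $b^{(i)}_r=h_r(\rho^{-})$ are index-independent, so both Jacobi--Trudi type determinants are transposed Jacobi--Trudi determinants equal to $s_\mu(\rho^{\pm})$. Two side remarks are slightly off, but you do not need them, since the corollary is stated under the standing assumptions: $\sup_k\alpha_k^{\pm}<1$ alone does not give nonvanishing and zero winding on $\mathbb T$ (a $\beta_k^{\pm}\ge1$ puts a zero of $\varphi_{\pm}$ on $\mathbb T$ or on the wrong side of it), and the continuity extension to $\alpha_k^{\pm}=1$ is not meaningful because then $\varphi\notin L^{1}(\mathbb T)$.
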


\begin{remark}
	When one of the specializations is a pure alpha specialization with
	at most $N$ nonzero $\alpha$-parameters, the restriction $\ell(\mu)\le N$ in the sum above is automatically satisfied.
	Therefore, in this case, $D_N(\varphi)$ has a product form thanks to the Cauchy summation
	identity for Schur functions \cite[I.(4.3)]{Macdonald1995}. If $\rho^-$ is such a finite alpha specialization,
	the same product form follows by setting $\xi_j\equiv 1$ in
	\Cref{thm:bialternant_column}; if $\rho^+$ is finite alpha, it follows from
	the analogous row-side bialternant of \Cref{rmk:bialternant_row}.

	In the general case, the biorthogonal ensemble arising
	from the right-hand side of \eqref{eq:CB_unshifted} is a \emph{conditional Schur measure},
	conditioned on the length of the partition being at most $N$.
	Asymptotic analysis of such conditional ensembles is delicate and usually
	calls for methods other than computing the correlation kernel in the
	conditional setting.
\end{remark}

In the case of two-sided pure shift tilts
$\xi_j(z)=z^{\lambda_{N+1-j}}$, $\theta_i(z)=z^{-\nu_{N+1-i}}$ for partitions
$\lambda,\nu$ with at most $N$ parts, the Jacobi--Trudi determinants in
\Cref{thm:tilted_CB} reduce to skew Schur polynomials under the specializations
$\rho^{\pm}$.
\begin{corollary}[Skew Schur expansion]
	\label{cor:CB_pure_shift_skew_schur} With the above assumptions and
	notation, 
	we have
	\begin{equation}
		\label{eq:CB_pure_shift_skew_schur}
		D_{N}^{\xi,\theta}(\varphi)
		=\sum_{\eta\colon\ell(\eta)\le N} s_{\eta/\lambda}(\rho^+)\,s_{\eta/\nu}(\rho^-).
	\end{equation}
\end{corollary}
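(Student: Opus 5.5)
The plan is to apply \Cref{thm:tilted_CB} directly with the pure-shift tilts $\xi_j(z)=z^{\lambda_{N+1-j}}$ and $\theta_i(z)=z^{-\nu_{N+1-i}}$. For these Schur symbols $G(\varphi)=1$, since $\varphi_\pm$ are normalized at $0$ and $\infty$. The coefficients are then shifted complete homogeneous functions. From $\varphi_{+}(z)=\sum_{r\ge0}h_r(\rho^+)z^r$ we get $a^{(j)}_r=h_{r-\lambda_{N+1-j}}(\rho^+)$. Likewise, $b^{(i)}_r=h_{r-\nu_{N+1-i}}(\rho^-)$, with the convention $h_m=0$ for $m<0$. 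The analyticity hypothesis of \Cref{thm:tilted_CB} holds if we take the parameters small enough that $\varphi_\pm$ extend past the unit circle. For general Schur-positive specializations, which may have $\alpha$'s close to $1$, both sides are formal power series (or polynomials) in the specialization variables. I would therefore argue by analytic continuation, or homogeneity, in a scaling parameter: replace $\rho^\pm$ by $t\rho^\pm$ and note that each $\eta$-term has degree $|\eta|-|\lambda|$ (respectively $|\eta|-|\nu|$) in $t$.

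Next, compute the Jacobi--Trudi determinants. Reversing the order of the sequences, $(\mathbf a^{\leftarrow})^{(i)}=\mathbf a^{(N+1-i)}$, so its entries are $h_{r-\lambda_i}(\rho^+)$. Then
\[
\operatorname{JT}^{(N)}_\mu(\mathbf a^{\leftarrow})=\det\bigl[h_{\mu_j-j+i-\lambda_i}(\rho^+)\bigr]_{i,j=1}^N .
\]
Transposing and relabeling gives $\det[h_{\mu_i-\lambda_j-i+j}]$, which by the skew Jacobi--Trudi formula \cite{Macdonald1995} is $s_{\mu/\lambda}(\rho^+)$. This expression is zero unless $\lambda\subseteq\mu$, which agrees with the vanishing convention for skew Schur functions. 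The same computation gives $\operatorname{JT}^{(N)}_\mu(\mathbf b^{\leftarrow})=s_{\mu/\nu}(\rho^-)$. Renaming $\mu$ to $\eta$ in \eqref{eq:tilted_CB_partition} then yields \eqref{eq:CB_pure_shift_skew_schur}.

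I expect the main obstacle to be index bookkeeping: verifying that the reversal $\leftarrow$ exactly matches the indexing $\lambda_{N+1-j}$, so that the determinant is literally the $N\times N$ skew Jacobi--Trudi determinant. It is valid with $N$ rows because $\ell(\mu),\ell(\lambda)\le N$. A secondary point is the convergence and continuation argument when the annulus hypothesis fails. I would handle it with the scaling $t\rho^\pm$ for small $t$, which makes both sides analytic in $t$ near $0$. Both sides then converge absolutely for the original parameters, by positivity of $s_{\eta/\lambda}(\rho^\pm)$ and comparison with the $\lambda=\nu=\varnothing$ Cauchy-type sum.
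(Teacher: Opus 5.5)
Your proof is correct and matches the paper's argument. The paper also plugs the pure-shift tilts into \Cref{thm:tilted_CB}, with $G(\varphi)=1$, $a^{(j)}_r=h_{r-\lambda_{N+1-j}}(\rho^+)$ and $b^{(i)}_r=h_{r-\nu_{N+1-i}}(\rho^-)$, and then reads off the Jacobi--Trudi determinants as $s_{\eta/\lambda}(\rho^+)$ and $s_{\eta/\nu}(\rho^-)$ via the skew Jacobi--Trudi formula, which is exactly your index bookkeeping.

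The scaling and analytic-continuation detour is unnecessary. Each $\alpha_k^\pm<1$ is needed anyway for $\varphi$ to be a symbol on $\mathbb T$, and together with $\sum_k\alpha_k^\pm<\infty$ this forces $\max_k\alpha_k^\pm<1$. Since the $\beta$ and $\gamma$ factors are entire, the annulus hypothesis of \Cref{thm:tilted_CB} then holds directly, and that theorem already gives the absolute convergence.
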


Under Schur-positive specializations
$\rho^{\pm}$, the
summands in \eqref{eq:CB_pure_shift_skew_schur} are nonnegative.
Thus, $D_{N}^{\xi,\theta}(\varphi)$ is the normalizing constant for a
probability measure on partitions with at most $N$ parts, supported on 
$\eta$'s with 
$\eta\supseteq\lambda,\nu$.

\section{Resolvent flows and a finite-dimensional closure problem}
\label{sec:dynamics}

Specialize the symbol $\varphi(z)$
to the family depending on finitely many times $\mathbf t=(t_{1},\ldots,t_{M})$ given by
\begin{equation}
	\label{eq:Laurent_symbol_finite_times}
	\varphi(z;\mathbf t)\coloneqq \exp\Bigl(\sum_{r=1}^{M}t_{r}(z^{r}+z^{-r})\Bigr).
\end{equation}
In the case $M=1$, this is the Bessel symbol $\varphi_{t}(z)=e^{t(z+z^{-1})}$.
The goal of this section is to describe the dependence of the tilted Toeplitz minors on the times $\mathbf t$,
inspired 
by the
differential equation method for
integrable Fredholm determinants of Its--Izergin--Korepin--Slavnov
\cite{its1990differential}; the Airy-kernel $F_2$/Painlev\'e~II formula of
Tracy--Widom \cite{TW_Airy_Painleve_2002} is the model example. For the Bessel
Toeplitz determinants, related Painlev\'e equations and recurrences for
orthogonal polynomials on the unit circle (OPUC) are part of the classical scalar
theory, recalled below in \Cref{rmk:classical_bessel_specialization}.
Here we record the finite resolvent flow identities,
and formulate the separate finite-dimensional closure problem.

\subsection{Universal resolvent matrix and polynomial tilts}
\label{sub:universal_resolvent}

For an arbitrary symbol $\varphi$ satisfying the assumptions of \Cref{sub:hardy_setup}, define
\begin{equation*}
R_{m}\colon H\to\mathbb C^{m},\quad e_{p}^{\top}R_{m}=e_{p}^{\top}T(\varphi_{+}),\qquad C_{n}\colon\mathbb C^{n}\to H,\quad C_{n}e_{q}=T(\varphi_{-})e_{q},
\end{equation*}
where $m,n$ are arbitrary positive integers, $0\le p<m$, $0\le q<n$.
The pair $(R_{m},C_{n})$ is determined by the symbol $\varphi$, and 
does not depend on the tilt.

\begin{definition}\label{def:Y_universal}
The \emph{universal resolvent matrix} of $(R_{m},C_{n})$ at the symbol $\varphi$ is
\begin{equation*}
Y_{\varphi}^{m,n}\coloneqq R_{m}\,A^{-1}\,C_{n}\in\operatorname{Mat}_{m,n}(\mathbb C),\qquad A=I-K^{\varphi}.
\end{equation*}
\end{definition}

Let $A_{\xi}\in\mathbb C^{n\times N}$ and $A_{\theta}\in\mathbb C^{N\times m}$,
and define
\begin{equation}
\label{eq:chart_factorization}
R=A_{\theta}\,R_{m}\colon H\to\mathbb C^{N},\qquad C=C_{n}\,A_{\xi}\colon\mathbb C^{N}\to H.
\end{equation}
Then, by associativity,
\begin{equation}
\label{eq:Y_universal_main}
\Det_{\mathbb C^{N}}\bigl(R\,A^{-1}\,C\bigr)=\Det_{\mathbb C^{N}}\bigl(A_{\theta}\,Y_{\varphi}^{m,n}\,A_{\xi}\bigr).
\end{equation}

Let us now specialize to polynomial tilts.

\begin{proposition}\label{prop:Y_universal_tilted}
Let $\xi_{j}(z)$ be a polynomial of degree at most $d_{\xi}\le n-N$ in $z$
and $\theta_{i}(z)$ be a polynomial of degree at most $d_{\theta}\le m-N$ in
$z^{-1}$ for all $i,j$. Then the chart
$R=\Theta\,T(\varphi_{+})$,
$C=T(\varphi_{-})\,\Xi\,P_{N}$ of
\eqref{eq:tilted_chart} has the factorization 
\eqref{eq:chart_factorization} with banded matrices
\begin{equation}
\label{eq:A_banded}
(A_{\theta})_{i,p}=\widehat{\theta_{i}}(i-1-p),
\qquad
(A_{\xi})_{q,j}=\widehat{\xi_{j}}(q-j+1),
\end{equation}
where $1\le i,j\le N$, $0\le p<m$, $0\le q<n$.
Then
\begin{equation}
\label{eq:Y_universal_tilted}
G(\varphi)^{-N}\,D_{N}^{\xi,\theta}(\varphi)=\Det_{\mathbb C^{N}}\bigl(A_{\theta}\,Y_{\varphi}^{m,n}\,A_{\xi}\bigr).
\end{equation}
\end{proposition}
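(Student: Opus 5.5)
The plan has two parts. First we verify the two matrix factorizations $R=A_\theta R_m$ and $C=C_nA_\xi$ entrywise. Then we combine them with \eqref{eq:Y_universal_main} and \Cref{lem:tilted_minor_LHS}.

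\emph{Column side.} For $1\le j\le N$, \eqref{eq:Xi_def} gives $\Xi P_N e_{j-1}=\sum_{k\ge0}\widehat\xi_j(k)\,e_{j-1+k}$. Since $\deg\xi_j\le d_\xi$, only indices $q=j-1+k\le N-1+d_\xi\le n-1$ occur. Writing $q=j-1+k$, this reads
\[
\Xi P_N e_{j-1}=\sum_{0\le q<n}\widehat{\xi_j}(q-j+1)\,e_q ,
\]
with the convention $\widehat{\xi_j}(s)=0$ for $s<0$ or $s>d_\xi$. Applying $T(\varphi_-)$ and using $C_ne_q=T(\varphi_-)e_q$ gives $Ce_{j}=\sum_q (A_\xi)_{q,j}C_ne_q=C_nA_\xi e_j$, as required.

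\emph{Row side.} By \eqref{eq:Theta_def}, $(\Theta h)_i=\sum_{s=0}^{d_\theta}\widehat{\theta_i}(-s)\,h_{i-1+s}$. Substituting $p=i-1+s$, which ranges over $p\le N-1+d_\theta\le m-1$, we get $\widehat{\theta_i}(-s)=\widehat{\theta_i}(i-1-p)$. Hence $e_i^\top\Theta=\sum_{p<m}(A_\theta)_{i,p}e_p^\top$. Composing with $T(\varphi_+)$ and recalling $e_p^\top R_m=e_p^\top T(\varphi_+)$ yields $R=A_\theta R_m$. Both matrices are banded because their Fourier supports are finite.

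\emph{Conclusion.} With the factorization \eqref{eq:chart_factorization} established, \eqref{eq:Y_universal_main} gives $\Det(RA^{-1}C)=\Det(A_\theta Y^{m,n}_\varphi A_\xi)$. \Cref{lem:tilted_minor_LHS} identifies the left side with $G(\varphi)^{-N}D_N^{\xi,\theta}(\varphi)$, which yields \eqref{eq:Y_universal_tilted}. This step needs no invertibility of the Gram matrix, because \Cref{lem:tilted_minor_LHS} does not use it.

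The only real care point is index bookkeeping. The basis of $H$ is indexed from $0$, rows and columns of $\mathbb C^N$ run from $1$, and we must check that the truncation to $p<m$, $q<n$ loses no terms. That is exactly where the hypotheses $d_\xi\le n-N$ and $d_\theta\le m-N$ enter, and there is no analytic obstacle.
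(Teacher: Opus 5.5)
Your proposal is correct and follows the paper's proof essentially step by step: you verify $R=A_\theta R_m$ and $C=C_nA_\xi$ directly from \eqref{eq:Theta_def} and \eqref{eq:Xi_def}, use the degree bounds only to keep the indices within $p<m$ and $q<n$, and then combine \eqref{eq:Y_universal_main} with \Cref{lem:tilted_minor_LHS}. Your remark that no Gram invertibility is needed is also accurate, since \Cref{lem:tilted_minor_LHS} relies only on the Wiener--Hopf identity.
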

\begin{proof}
Expand the row tilt operator of \eqref{eq:Theta_def}:
$(\Theta h)_{i}=\sum_{a\ge 0}\widehat{\theta_{i}}(-a)\,h_{i-1+a}$. Since
$\theta_i$ has degree at most $d_\theta$ in $z^{-1}$, the $i$-th row of
$\Theta T(\varphi_+)$ is
\[
\sum_{a=0}^{d_\theta}\widehat{\theta_i}(-a)\,e_{i-1+a}^{\top}T(\varphi_+).
\]
The coefficient formula for $A_\theta$ in \eqref{eq:A_banded} therefore gives
$\Theta T(\varphi_+)=A_\theta R_m$; the degree bound $d_\theta\le m-N$ ensures
that all rows used by this sum lie among $0,\ldots,m-1$. Similarly, from
\eqref{eq:Xi_def}, the formula for $A_\xi$ gives
$T(\varphi_-)\Xi P_N=C_nA_\xi$, and the bound $d_\xi\le n-N$ ensures that all
columns used lie among $0,\ldots,n-1$. Thus the tilted chart has the
factorization \eqref{eq:chart_factorization}. Applying \eqref{eq:Y_universal_main}
and then \Cref{lem:tilted_minor_LHS} gives \eqref{eq:Y_universal_tilted}.
\end{proof}

\begin{remark}
For \eqref{eq:Y_universal_main}, the matrices $A_{\xi},A_{\theta}$ can be arbitrary.
However, to connect them to a finite chart $(R,C)$ as in 
\Cref{sec:finite_rank_specialization}, the matrices
$A_\theta R_m$ and $C_n A_\xi$ must have rank $N$, and the Gram matrix
$A_{\theta}R_{m}C_{n}A_{\xi}$ must be invertible.
The polynomial tilts with bounded degrees provide one such family of examples.
\end{remark}

\begin{definition}\label{def:fredholm_tau}
For a symbol $\varphi$ and chart maps $R,C$ (fixed, or built from $\varphi$ as
in \eqref{eq:tilted_chart}) with $\Gamma\coloneqq RC$ invertible on
$\mathbb C^{N}$, the \emph{oblique Fredholm tau} is
\begin{equation*}
\mathcal T_{R,C}(\varphi)\coloneqq \Det_{\mathbb C^{N}}(\Gamma)\,\Det_{\operatorname{Ker}(R)}\bigl(I_{\operatorname{Ker}(R)}-\Pi_{V}K|_{\operatorname{Ker}(R)}\bigr).
\end{equation*}
\end{definition}

When $R,C$ depend on parameters, this notation means that $R,C,K$ and
$\Pi_{V}$ are evaluated at the same parameter value. Equivalently, by \Cref{thm:finite_GBO},
\begin{equation}
\label{eq:fredholm_tau_jacobi}
\mathcal T_{R,C}(\varphi)=Z^{-1}\,\Det_{\mathbb C^{N}}\bigl(R\,A^{-1}\,C\bigr)=\Det_{H}(I-K)\,\Det_{\mathbb C^{N}}\bigl(R\,A^{-1}\,C\bigr),
\end{equation}
so identity \eqref{eq:tilted_finite_GBO} reads $D_{N}^{\xi,\theta}(\varphi)=G(\varphi)^{N}\,Z\,\mathcal T_{R,C}(\varphi)$.

\subsection{Symmetric finite Laurent symbols}
\label{sub:finite_laurent}

Fix $M\ge 1$ and $\mathbf t=(t_{1},\ldots,t_{M})\in\mathbb R^{M}$; the
identities below extend holomorphically to $\mathbf t\in\mathbb C^{M}$. The
symmetric finite Laurent exponential symbol 
$\varphi(z;\mathbf t)$
is defined by \eqref{eq:Laurent_symbol_finite_times}.
Its Wiener--Hopf factors are $\varphi_{\pm}(z)=\exp\bigl(\sum_{r}t_{r}z^{\pm r}\bigr)$,
thus, 
$G(\varphi)=1$, and the strong Szeg\H o constant is $Z_{\mathbf t}=\exp\bigl(\sum_{r=1}^{M}r\,t_{r}^{2}\bigr)$.
This implies that
\begin{equation*}
\Det_{H}(I-K_{\mathbf t})=e^{-\sum_{r}r\,t_{r}^{2}},\qquad
\partial_{t_{r}}\log\Det_{H}(I-K_{\mathbf t})=-2r\,t_{r}.
\end{equation*}
Let $S,S^{*}$ denote the right and left shifts on $H=\ell^{2}(\mathbb Z_{\ge 0})$, $S e_{k}=e_{k+1}$ and $S^{*}e_{0}=0$, $S^{*}e_{k}=e_{k-1}$ for $k\ge1$.
The ratio
\begin{equation*}
b(z;\mathbf t)\coloneqq \frac{\varphi_{-}(z)}{\varphi_{+}(z)}=\exp\Bigl(\sum_{r}t_{r}(z^{-r}-z^{r})\Bigr)
\end{equation*}
has Fourier coefficients $b_{n}(\mathbf t)$. Since $\widetilde c=b$ in this symmetric case,
the kernel has the form
$K_{\mathbf t}=\mathcal H(b)\mathcal H(b)^{\top}$.

\begin{remark}
By \cite[Remark~2]{BorodinOkounkov2000}, for
$i\neq j$, we can write the BOGC kernel as follows:
\[
K_{\mathbf t}(i,j)
=-\frac{1}{i-j}\sum_{r=1}^{M}r t_r\sum_{a=0}^{r-1}
\left(b_{i-a}(\mathbf t)b_{j+r-a}(\mathbf t)-b_{j-a}(\mathbf t)b_{i+r-a}(\mathbf t)\right).
\]
By \Cref{lem:finite_rank_oblique_correction},
the tilted kernel corresponds to a rank $N$ correction of $K_{\mathbf t}$, that is,
\[
(\Pi_{V}K_{\mathbf t})(i,j)
=-\frac{1}{i-j}\sum_{r=1}^{M}r t_r\sum_{a=0}^{r-1}
\left(b_{i-a}(\mathbf t)b_{j+r-a}(\mathbf t)-b_{j-a}(\mathbf t)b_{i+r-a}(\mathbf t)\right)-\sum_{\alpha=1}^{N}c_{\alpha}(i)\,\psi_{\alpha}(j),
\]
where $c_{\alpha}=Cf_{\alpha}\in H$ and
$(\psi_{\alpha}(j))_{j\ge0}=f_{\alpha}^{\top}\Gamma^{-1}R K_{\mathbf t}e_j$.
\end{remark}

For $1\le r\le M$ and $0\le a\le r-1$, define
\begin{equation*}
\rho_{a}^{(r)}[\ell]\coloneqq b_{\ell+a+1-r}(\mathbf t),\qquad
h_{a}^{(r)}\coloneqq \mathcal H(b)\,\rho_{a}^{(r)}.
\end{equation*}

\begin{lemma}\label{lem:finite_laurent_regularity}
In the above setting, the
coefficients
$b_n(\mathbf t)$ decay faster than exponentially as $n\to\pm\infty$, locally
uniformly for $\mathbf t$ in compact subsets of $\mathbb C^M$. Consequently
$\mathcal H(b)$ and $\partial_{t_r}\mathcal H(b)$ are Hilbert--Schmidt,
$K_{\mathbf t}=\mathcal H(b)\mathcal H(b)^{\top}$ and $\partial_{t_{r}}K_{\mathbf t}$ are trace class,
and $\rho_a^{(r)},h_a^{(r)}\in H$.
\end{lemma}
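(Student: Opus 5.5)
The plan is to prove super-exponential decay of $b_n(\mathbf t)$ first, and then derive every other claim of \Cref{lem:finite_laurent_regularity} from it.

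\textbf{Step 1: decay of the coefficients.} The function $b(z;\mathbf t)=\exp\bigl(\sum_r t_r(z^{-r}-z^r)\bigr)$ is holomorphic on $\mathbb C\setminus\{0\}$. Hence for every $\rho>0$ Cauchy's estimate on $|z|=\rho$ gives
\begin{equation*}
|b_n(\mathbf t)|\le \rho^{-n}\max_{|z|=\rho}|b(z;\mathbf t)|
\le \rho^{-n}\exp\Bigl(\sum_{r=1}^{M}|t_r|(\rho^{r}+\rho^{-r})\Bigr).
\end{equation*}
For $\mathbf t$ in a compact set $\mathcal K\subset\mathbb C^M$, put $T=\max_{\mathcal K}\sum_r|t_r|$. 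Fix $\rho>1$ arbitrary. For $n\ge0$ this gives $|b_n|\le C_{\rho,\mathcal K}\rho^{-n}$. For $n<0$ we use the circle of radius $\rho^{-1}$ instead, which gives $|b_n|\le C_{\rho,\mathcal K}\rho^{-|n|}$. Since $\rho$ is arbitrary, the decay is faster than any exponential, uniformly on $\mathcal K$.

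The same argument applies to $\partial_{t_r}b=(z^{-r}-z^r)\,b$. Its coefficients are $b_{n+r}-b_{n-r}$, so they enjoy the same bounds. Derivatives under the Fourier integral are justified because the integrand is jointly holomorphic in $(z,\mathbf t)$.

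\textbf{Step 2: Hilbert--Schmidt and trace class.} The Hilbert--Schmidt norm satisfies
\begin{equation*}
\|\mathcal H(b)\|_{\mathrm{HS}}^2=\sum_{i,j\ge0}|b_{i+j+1}|^2=\sum_{n\ge1}n|b_n|^2<\infty,
\end{equation*}
by Step 1, and likewise for $\partial_{t_r}\mathcal H(b)=\mathcal H(\partial_{t_r}b)$. Differentiability of $\mathbf t\mapsto\mathcal H(b)$ in HS norm follows from the locally uniform bounds on difference quotients, again via Cauchy estimates in $\mathbf t$.

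Since $\widetilde c=b$ here, $\mathcal H(\widetilde c)$ has entries $b_{i+j+1}$ and is the symmetric matrix $\mathcal H(b)^{\top}$. Then $K_{\mathbf t}$ is a product of two HS operators, hence trace class. The product rule gives
\begin{equation*}
\partial_{t_r}K_{\mathbf t}=(\partial_{t_r}\mathcal H(b))\,\mathcal H(b)^{\top}+\mathcal H(b)\,(\partial_{t_r}\mathcal H(b))^{\top},
\end{equation*}
which is a sum of products of HS operators and hence trace class.

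\textbf{Step 3: the vectors.} The vector $\rho_a^{(r)}$ has entries $b_{\ell+a+1-r}$ for $\ell\ge0$, a shifted tail of a square-summable sequence, so $\rho_a^{(r)}\in H$. Since $\mathcal H(b)$ is bounded, $h_a^{(r)}=\mathcal H(b)\rho_a^{(r)}\in H$.

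The only mildly delicate point is keeping the bounds locally uniform in complex $\mathbf t$, so that differentiation commutes with the operator norms. This is handled by the joint holomorphy together with the uniform Cauchy bound.
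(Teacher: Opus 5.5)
Your proposal is correct and follows essentially the same route as the paper: Cauchy estimates on circles of radius $\rho^{\pm1}$ give superexponential decay, the criterion $\sum_{n\ge1}n|b_n|^2<\infty$ makes $\mathcal H(b)$ and $\mathcal H(\partial_{t_r}b)$ Hilbert--Schmidt, the Leibniz rule handles $\partial_{t_r}K_{\mathbf t}$, and shift plus boundedness give $\rho_a^{(r)},h_a^{(r)}\in H$. Your remark on differentiability in the Hilbert--Schmidt norm via Cauchy estimates in $\mathbf t$ adds a small justification the paper leaves implicit. One wording point: $\rho_a^{(r)}$ is not just a tail, since it also contains finitely many negative-index coefficients; this is harmless because the two-sided sequence $(b_n)_{n\in\mathbb Z}$ is square-summable.
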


\begin{proof}
Cauchy's formula on $|z|=R>1$
gives, for $n\ge0$,
\[
|b_n(\mathbf t)|\le
\exp\bigl(C_{\mathbf t}(R^M+R^{-M})\bigr)R^{-n},
\]
and optimizing in $R$ gives superexponential decay. The coefficients with
$n<0$ are handled by applying the same estimate on $|z|=R^{-1}$. The same
bounds apply to $\partial_{t_r}b=(z^{-r}-z^r)b$. Its Fourier coefficients are finite
linear combinations of $b_{n-r}$ and $b_{n+r}$, so they also decay faster than
exponentially. Therefore $\partial_{t_r}\mathcal H(b)=\mathcal H(\partial_{t_r}b)$
is Hilbert--Schmidt by the same criterion; indeed,
\[
\sum_{i,\ell\ge0}|b_{i+\ell+1}|^2=\sum_{k\ge1}k|b_k|^2<\infty,
\]
and the same calculation for $\partial_{t_r}b$ applies. Products of two Hilbert--Schmidt operators are trace class, giving the assertion for $K_{\mathbf t}$ and $\partial_{t_{r}}K_{\mathbf t}$ (the latter by the Leibniz rule). Finally,
$\rho_a^{(r)}$ is a shift of the coefficient sequence of $b$, and
$h_a^{(r)}=\mathcal H(b)\rho_a^{(r)}$.
\end{proof}

\begin{proposition}\label{prop:tr_flow_HK}
For each $1\le r\le M$, we have
\begin{align}
\label{eq:H_prime_finite_laurent}
\partial_{t_{r}}\mathcal H(b) &= (S^{*})^{r}\mathcal H(b)-S^{r}\,\mathcal H(b)-\sum_{a=0}^{r-1}e_{a}\,(\rho_{a}^{(r)})^{\top},\\
\label{eq:K_prime_finite_laurent}
\partial_{t_{r}}K_{\mathbf t} &= (S^{*})^{r}K_{\mathbf t}-S^{r}\,K_{\mathbf t}+K_{\mathbf t}\,S^{r}-K_{\mathbf t}\,(S^{*})^{r}-\sum_{a=0}^{r-1}\bigl[e_{a}\,(h_{a}^{(r)})^{\top}+h_{a}^{(r)}\,e_{a}^{\top}\bigr].
\end{align}
The derivatives above are taken in the Hilbert--Schmidt
class for the Hankel operators and in the trace-class sense for $K_{\mathbf t}$.
\end{proposition}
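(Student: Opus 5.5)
The plan is to verify \eqref{eq:H_prime_finite_laurent} entrywise from the Fourier coefficients of $\partial_{t_r}b$. Then \eqref{eq:K_prime_finite_laurent} follows from the Leibniz rule and the symmetry of Hankel matrices. The analytic justification (differentiability in $\mathfrak S_2$ and $\mathfrak S_1$) rests on \Cref{lem:finite_laurent_regularity}.

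\emph{Step 1 (entrywise Hankel flow).} Since $\partial_{t_r}b=(z^{-r}-z^{r})\,b$, the Fourier coefficients satisfy
\[
\partial_{t_r}b_n=b_{n+r}-b_{n-r}.
\]
Hence
\[
\bigl(\partial_{t_r}\mathcal H(b)\bigr)_{i\ell}=b_{i+\ell+1+r}-b_{i+\ell+1-r}.
\]
The first term equals $\mathcal H(b)_{i+r,\ell}=\bigl((S^{*})^{r}\mathcal H(b)\bigr)_{i\ell}$.

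For the second term, $(S^{r}\mathcal H(b))_{i\ell}$ equals $\mathcal H(b)_{i-r,\ell}=b_{i+\ell+1-r}$ when $i\ge r$, and vanishes when $i<r$. For the missing rows $i=a\in\{0,\dots,r-1\}$, the entry $b_{a+\ell+1-r}$ is exactly $\rho_a^{(r)}[\ell]$. These rows are therefore supplied by $\sum_{a=0}^{r-1}e_a(\rho_a^{(r)})^{\top}$, which gives \eqref{eq:H_prime_finite_laurent} at the level of matrix entries.

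\emph{Step 2 (flow of $K$).} Use $\widetilde c=b$, so that $K_{\mathbf t}=\mathcal H(b)\mathcal H(b)^{\top}$, together with $\mathcal H(b)^{\top}=\mathcal H(b)$, since Hankel matrices are symmetric. The Leibniz rule gives
\[
\partial_{t_r}K_{\mathbf t}=(\partial_{t_r}\mathcal H)\mathcal H^{\top}+\mathcal H(\partial_{t_r}\mathcal H)^{\top}.
\]
Substituting \eqref{eq:H_prime_finite_laurent} into the first product gives
\[
(S^{*})^{r}K_{\mathbf t}-S^{r}K_{\mathbf t}-\sum_a e_a\bigl(\mathcal H(b)\rho_a^{(r)}\bigr)^{\top},
\]
and the last sum is $\sum_a e_a (h_a^{(r)})^{\top}$. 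Transposing \eqref{eq:H_prime_finite_laurent} and using $((S^{*})^{r})^{\top}=S^{r}$, the second product becomes
\[
K_{\mathbf t}S^{r}-K_{\mathbf t}(S^{*})^{r}-\sum_a h_a^{(r)}e_a^{\top}.
\]
Adding the two expressions yields \eqref{eq:K_prime_finite_laurent}.

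\emph{Step 3 (topology of the derivatives).} This is the main point needing care, though it is routine. I would show that $t_r\mapsto\mathcal H(b)$ is differentiable in Hilbert--Schmidt norm, with derivative $\mathcal H(\partial_{t_r}b)$. The Hilbert--Schmidt norm of a Hankel operator is $\bigl(\sum_{k\ge1}k|\widehat\psi(k)|^2\bigr)^{1/2}$, so it suffices to show that the difference quotients of $b_k(\mathbf t)$ converge to $\partial_{t_r}b_k$ in this weighted $\ell^2$ norm.

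Each $b_k$ is entire in $\mathbf t$. By Cauchy's estimate in $\mathbf t$, combined with the locally uniform superexponential decay from \Cref{lem:finite_laurent_regularity}, the difference quotients are dominated by a summable superexponentially decaying sequence. Dominated convergence then gives convergence in the weighted norm.

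Since products of Hilbert--Schmidt operators are trace class and multiplication $\mathfrak S_2\times\mathfrak S_2\to\mathfrak S_1$ is continuous bilinear, the Leibniz rule in Step 2 holds in trace norm. The shifts are bounded, and $\rho_a^{(r)},h_a^{(r)}\in H$, so every term on the right-hand sides lies in the stated class. The entrywise identities of Steps 1–2 therefore identify the $\mathfrak S_2$ and $\mathfrak S_1$ derivatives.
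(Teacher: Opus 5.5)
Your proof is correct and follows the paper's argument essentially verbatim. You compute the entrywise identity $\partial_{t_r}b_n=b_{n+r}-b_{n-r}$, supply the boundary rows $0\le a<r$ by $\rho_a^{(r)}$, and then apply the Leibniz rule to $K_{\mathbf t}=\mathcal H(b)\mathcal H(b)^{\top}$, transposing the shifts. Your Step~3 goes slightly beyond the paper: using Cauchy estimates in $\mathbf t$ to get convergence of difference quotients in the weighted norm $\sum_k k|\cdot|^2$, and then continuity of $\mathfrak S_2\times\mathfrak S_2\to\mathfrak S_1$, makes explicit the $\mathfrak S_2$/$\mathfrak S_1$ differentiability that the paper leaves implicit behind \Cref{lem:finite_laurent_regularity}.
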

\begin{proof}
For \eqref{eq:H_prime_finite_laurent}, $\partial_{t_{r}}b(z)=(z^{-r}-z^{r})\,b(z)$, so $\partial_{t_{r}}b_{n}=b_{n+r}-b_{n-r}$. Hence $\partial_{t_{r}}\mathcal H(b)_{i,\ell}=b_{i+\ell+1+r}-b_{i+\ell+1-r}$. The identities $((S^{*})^{r}\mathcal H(b))_{i,\ell}=b_{i+\ell+1+r}$ and $(S^{r}\mathcal H(b))_{i,\ell}=b_{i+\ell+1-r}$ hold for $i\ge r$. For $0\le i<r$, the second shifted term is zero, and the boundary sum subtracts $b_{\ell+a+1-r}$ in row $i=a$.

For \eqref{eq:K_prime_finite_laurent}, differentiate $K_{\mathbf t}=\mathcal H(b)\,\mathcal H(b)^{\top}$ by the Leibniz rule:
\[
\partial_{t_{r}}K_{\mathbf t}=(\partial_{t_{r}}\mathcal H(b))\,\mathcal H(b)^{\top}+\mathcal H(b)\,(\partial_{t_{r}}\mathcal H(b))^{\top}.
\]
Since $((S^{*})^{r})^{\top}=S^{r}$ and $(S^{r})^{\top}=(S^{*})^{r}$, the transpose of the first two terms in \eqref{eq:H_prime_finite_laurent} contributes $K_{\mathbf t} S^{r}-K_{\mathbf t}(S^{*})^{r}$.  The transpose of the boundary term contributes $-\sum_{a}h_{a}^{(r)}e_{a}^{\top}$, while the non-transposed boundary term contributes $-\sum_{a}e_{a}(h_{a}^{(r)})^{\top}$.
\end{proof}

\begin{proposition}\label{prop:tr_flow_Y}
Fix the tilts $\xi_j,\theta_i$ as in \Cref{sec:tilted_toeplitz_minors}, and let $\Xi,\Theta$ be the
corresponding tilt operators (\Cref{def:tilt_operators}).  Set
$R(\mathbf t)=\Theta T(\varphi_{+}(\mathbf t))$ and
$C(\mathbf t)=T(\varphi_{-}(\mathbf t))\Xi P_{N}$.
For
each $1\le r\le M$, set $Q(\mathbf t)\coloneqq(I-K_{\mathbf t})^{-1}$ and
$Y(\mathbf t)\coloneqq R(\mathbf t)Q(\mathbf t)C(\mathbf t)$. Suppressing
$\mathbf t$ from the notation, we have
\begin{equation}
\label{eq:Y_prime_finite_laurent}
\partial_{t_{r}}Y=R\,(S^{*})^{r}\,Q\,C+R\,Q\,S^{r}\,C-\sum_{a=0}^{r-1}\bigl[(R\,Q\,e_{a})((h_{a}^{(r)})^{\top}\,Q\,C)+(R\,Q\,h_{a}^{(r)})(e_{a}^{\top}\,Q\,C)\bigr].
\end{equation}
\end{proposition}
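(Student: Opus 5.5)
The plan is to differentiate $Y=RQC$ by the Leibniz rule,
\[
\partial_{t_r}Y=(\partial_{t_r}R)\,Q\,C+R\,(\partial_{t_r}Q)\,C+R\,Q\,(\partial_{t_r}C),
\]
and to simplify each term using shift-commutation properties of the triangular Toeplitz factors together with the resolvent relations for $Q=(I-K)^{-1}$.

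\textbf{Regularity.} By \Cref{lem:finite_laurent_regularity}, $\mathbf t\mapsto K_{\mathbf t}$ is differentiable in trace norm. Since $I-K_{\mathbf t}$ is invertible, $Q$ is differentiable in operator norm with $\partial_{t_r}Q=Q(\partial_{t_r}K)Q$.

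\textbf{The outer factors.} We have $\partial_{t_r}\varphi_{+}=z^{r}\varphi_{+}$ and $\partial_{t_r}\varphi_{-}=z^{-r}\varphi_{-}$. The Fourier coefficients of $\varphi_\pm$ decay superexponentially, so these derivatives can be taken entrywise and hold in operator norm. Because $\varphi_+$ is analytic in the disk, $T(z^r\varphi_+)=S^rT(\varphi_+)=T(\varphi_+)S^r$: lower-triangular Toeplitz operators commute with $S$. Similarly, since $z^{-r}$ and $\varphi_-$ are both co-analytic, $T(z^{-r}\varphi_-)=T(\varphi_-)(S^*)^r=(S^*)^rT(\varphi_-)$. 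Since $\Theta$ and $\Xi P_N$ do not depend on $\mathbf t$, this gives
\[
\partial_{t_r}R=\Theta T(\varphi_+)S^r=R\,S^r,\qquad \partial_{t_r}C=(S^*)^rT(\varphi_-)\Xi P_N=(S^*)^r C.
\]
The first and third Leibniz terms are therefore $RS^rQC$ and $RQ(S^*)^rC$. It is essential to place the shift on the side adjacent to $Q$. Before relying on this, I would double-check the commutation relations on the basis vectors $e_k$.

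\textbf{The middle term.} Insert \eqref{eq:K_prime_finite_laurent} into $RQ(\partial_{t_r}K)QC$. The boundary part gives exactly
\[
-\sum_a\bigl[(RQe_a)\bigl((h_a^{(r)})^\top QC\bigr)+\bigl(RQh_a^{(r)}\bigr)\bigl(e_a^\top QC\bigr)\bigr].
\]
For the four shift terms, use $KQ=QK=Q-I$. This yields
\begin{align*}
Q(S^*)^rKQ&=Q(S^*)^rQ-Q(S^*)^r, & QS^rKQ&=QS^rQ-QS^r,\\
QKS^rQ&=QS^rQ-S^rQ, & QK(S^*)^rQ&=Q(S^*)^rQ-(S^*)^rQ.
\end{align*}
With the signs from \eqref{eq:K_prime_finite_laurent}, the quadratic terms $QXQ$ cancel, leaving
\[
-Q(S^*)^r+QS^r-S^rQ+(S^*)^rQ .
\]
Sandwiching between $R$ and $C$ and adding the outer contributions $RS^rQC+RQ(S^*)^rC$, the terms $RS^rQC$ and $RQ(S^*)^rC$ cancel. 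What survives is $R(S^*)^rQC+RQS^rC$, which gives \eqref{eq:Y_prime_finite_laurent}.

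\textbf{Main obstacle.} The main point of care is the bookkeeping of this cancellation: the outer derivatives must be written with the shift on the correct side, namely $RS^r$ rather than $S^rR$ (which is not even defined), and $(S^*)^rC$. The rest is algebra and the trace-norm justification above.
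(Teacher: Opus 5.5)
Your proposal is correct and follows the paper's proof essentially step for step. Both arguments use the Leibniz rule with $\partial_{t_r}R=RS^r$, $\partial_{t_r}C=(S^*)^rC$ and $\partial_{t_r}Q=Q(\partial_{t_r}K)Q$, substitute \eqref{eq:K_prime_finite_laurent}, and telescope the four shift terms via $KQ=QK=Q-I$; your regularity remarks on differentiability of $K_{\mathbf t}$ and of the Toeplitz factors are a harmless supplement that the paper leaves implicit.
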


\begin{proof}
Since $T(\varphi_{+})=\exp(\sum_{s}t_{s}\,S^{s})$ and $T(\varphi_{-})=\exp(\sum_{s}t_{s}\,(S^{*})^{s})$, we have $\partial_{t_{r}}R=R\,S^{r}$ and $\partial_{t_{r}}C=(S^{*})^{r}\,C$. Differentiating $Y=RQC$ and using $\partial_{t_{r}}Q=Q\,(\partial_{t_{r}}K)\,Q$ gives
\begin{equation*}
\partial_{t_{r}}Y=R\,S^{r}Q C+R Q(S^{*})^{r}C+RQ(\partial_{t_{r}}K)QC.
\end{equation*}
Substitute \eqref{eq:K_prime_finite_laurent}. The four pure-shift products telescope as follows, using $KQ=QK=Q-I$:
\begin{align*}
RQ(S^{*})^{r}KQC&=RQ(S^{*})^{r}QC-RQ(S^{*})^{r}C,\\
RQS^{r}KQC&=RQS^{r}QC-RQS^{r}C,\\
RQKS^{r}QC&=RQS^{r}QC-RS^{r}QC,\\
RQK(S^{*})^{r}QC&=RQ(S^{*})^{r}QC-R(S^{*})^{r}QC.
\end{align*}
With the signs $+,-,+,-$ from \eqref{eq:K_prime_finite_laurent}, the $RQ(S^{*})^{r}QC$ and $RQS^{r}QC$ terms cancel pairwise. The remaining pure-shift terms are cancelled by $R S^{r}QC+RQ(S^{*})^{r}C$, leaving $R(S^{*})^{r}QC+RQ S^{r}C$. The boundary part of \eqref{eq:K_prime_finite_laurent} gives the finite rank sum in \eqref{eq:Y_prime_finite_laurent}.
\end{proof}

\begin{corollary}\label{cor:rectangular_Y_flow}
For the time-dependent universal maps $R_m(\mathbf t),C_n(\mathbf t)$, the same
identity holds for the rectangular universal block
$Y_{\varphi}^{m,n}=R_mQC_n$:
\begin{equation}
\label{eq:Y_rect_prime_finite_laurent}
\partial_{t_r}Y_{\varphi}^{m,n}
=R_m(S^*)^rQC_n+R_mQS^rC_n-
\sum_{a=0}^{r-1}
\bigl[(R_mQe_a)((h_a^{(r)})^\top QC_n)+(R_mQh_a^{(r)})(e_a^\top QC_n)\bigr].
\end{equation}
Consequently, for polynomial tilts and on the open set where
$A_{\theta}Y_{\varphi}^{m,n}A_{\xi}$ is invertible, we have
\begin{equation*}
\partial_{t_r}\log\Det_{\mathbb C^N}\bigl(A_{\theta}Y_{\varphi}^{m,n}A_{\xi}\bigr)
=\operatorname{tr}_{\mathbb C^N}\left[
\bigl(A_{\theta}Y_{\varphi}^{m,n}A_{\xi}\bigr)^{-1}
A_{\theta}(\partial_{t_r}Y_{\varphi}^{m,n})A_{\xi}
\right].
\end{equation*}
\end{corollary}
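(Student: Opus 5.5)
The statement is \Cref{cor:rectangular_Y_flow}, and I would prove it by running the argument of \Cref{prop:tr_flow_Y} verbatim with $R$ and $C$ replaced by $R_m(\mathbf t)$ and $C_n(\mathbf t)$. That proof uses only three properties of the chart maps:
\begin{enumerate}
\item $\partial_{t_r}R=RS^r$;
\item $\partial_{t_r}C=(S^*)^rC$;
\item the resolvent $Q=(I-K_{\mathbf t})^{-1}$ satisfies $\partial_{t_r}Q=Q(\partial_{t_r}K_{\mathbf t})Q$ together with $KQ=QK=Q-I$.
\end{enumerate}
The tilts $\Theta,\Xi$ entered there only as $\mathbf t$-independent factors on the far left and far right. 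So the first step is to check (1) and (2) for the universal maps. By definition $R_m=\iota_m^{\top}T(\varphi_+)$, where $\iota_m^{\top}$ extracts coordinates $0,\ldots,m-1$, and $C_n=T(\varphi_-)\iota_n$. Since $T(\varphi_+)=\exp(\sum_s t_sS^s)$ and $T(\varphi_-)=\exp(\sum_s t_s(S^*)^s)$ with mutually commuting exponents, we get $\partial_{t_r}R_m=R_mS^r$ and $\partial_{t_r}C_n=(S^*)^rC_n$.

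The second step is to differentiate $Y_\varphi^{m,n}=R_mQC_n$ by the product rule and substitute \eqref{eq:K_prime_finite_laurent} from \Cref{prop:tr_flow_HK}. The four pure-shift terms telescope exactly as in the proof of \Cref{prop:tr_flow_Y}, leaving $R_m(S^*)^rQC_n+R_mQS^rC_n$. The boundary part of \eqref{eq:K_prime_finite_laurent} produces the rank-$2r$ sum, which gives \eqref{eq:Y_rect_prime_finite_laurent}. An equivalent shortcut is to note that $(A_\theta,A_\xi)$ are arbitrary constant matrices; taking $\Theta=\iota_m^\top$ and $\Xi$ replaced by $\iota_n$ in the proof of \Cref{prop:tr_flow_Y} is literally the same computation.

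The main point needing care is analytic: the differentiations must be justified as Fréchet derivatives in operator norm. The map $\mathbf t\mapsto K_{\mathbf t}$ is differentiable in trace norm by \Cref{lem:finite_laurent_regularity} and \Cref{prop:tr_flow_HK}, so $Q$ is differentiable in operator norm wherever $I-K_{\mathbf t}$ is invertible, with the stated derivative. The maps $R_m,C_n$ are finite-rank compositions of entire operator exponentials of bounded shifts, hence smooth. The product rule then applies. I would also confirm that $R_m(S^*)^r$ and $S^rC_n$ are bounded and that $e_a,h_a^{(r)}\in H$, both of which follow from \Cref{lem:finite_laurent_regularity}.

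Finally, for the log-derivative I would set $F(\mathbf t)=A_\theta Y_\varphi^{m,n}(\mathbf t)A_\xi$, where $A_\theta,A_\xi$ from \eqref{eq:A_banded} are $\mathbf t$-independent because the tilts are fixed polynomials. On the open set where $F$ is invertible, Jacobi's formula for finite matrices gives $\partial_{t_r}\log\det F=\operatorname{tr}(F^{-1}\partial_{t_r}F)$, and $\partial_{t_r}F=A_\theta(\partial_{t_r}Y_\varphi^{m,n})A_\xi$. By \Cref{prop:Y_universal_tilted} this computes $\partial_{t_r}\log D_N^{\xi,\theta}$, since $G(\varphi)=1$ here.
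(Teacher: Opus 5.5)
Your proposal is correct and follows the paper's own proof. The paper likewise says the computation of \Cref{prop:tr_flow_Y} applies verbatim because $\partial_{t_r}R_m=R_mS^r$ and $\partial_{t_r}C_n=(S^*)^rC_n$, and then appeals to the standard finite-dimensional formula for the log-derivative of a determinant; your remarks on operator-norm differentiability and on the $\mathbf t$-independence of $A_\theta,A_\xi$ only fill in details the paper leaves implicit.
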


\begin{proof}
The proof of \Cref{prop:tr_flow_Y} applies verbatim to $R_m(\mathbf t),C_n(\mathbf t)$,
since $\partial_{t_r}R_m=R_mS^r$ and $\partial_{t_r}C_n=(S^*)^rC_n$. The logarithmic
derivative of a determinant is a standard fact for finite-dimensional matrices.
\end{proof}

\begin{corollary}\label{cor:tr_log_derivative}
Fix the tilts and set
\[
\Gamma(\mathbf t)=R(\mathbf t)C(\mathbf t),\qquad
Y(\mathbf t)=R(\mathbf t)Q(\mathbf t)C(\mathbf t),
\qquad
\mathcal T(\mathbf t)\coloneqq
\mathcal T_{R(\mathbf t),C(\mathbf t)}(\varphi(\cdot;\mathbf t)).
\]
On any open set where $\Gamma(\mathbf t)$ and $Y(\mathbf t)$ are invertible, for
$1\le r\le M$ we have
\begin{equation*}
\partial_{t_{r}}\log \mathcal T(\mathbf t)=-2r\,t_{r}+\operatorname{tr}_{\mathbb C^{N}}\bigl(Y(\mathbf t)^{-1}\,\partial_{t_{r}}Y(\mathbf t)\bigr).
\end{equation*}
\end{corollary}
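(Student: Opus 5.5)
The plan is to combine the representation \eqref{eq:fredholm_tau_jacobi} with the flow identities already established. By \Cref{thm:finite_GBO}, in the form \eqref{eq:fredholm_tau_jacobi}, we have
\begin{equation*}
\mathcal T(\mathbf t)=\Det_{H}(I-K_{\mathbf t})\cdot\Det_{\mathbb C^{N}}\bigl(Y(\mathbf t)\bigr),
\end{equation*}
valid at every $\mathbf t$ where $\Gamma(\mathbf t)$ is invertible; here $R,C,K$ are all evaluated at the same $\mathbf t$. On the open set where, in addition, $Y(\mathbf t)$ is invertible, both factors are nonzero. Indeed, $\Det_H(I-K_{\mathbf t})=e^{-\sum_r r t_r^2}$ never vanishes. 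So we may take a logarithmic derivative, at least locally on a branch of the logarithm, or equivalently work with $\partial_{t_r}\mathcal T/\mathcal T$. This splits the derivative into two summands.

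For the first summand, the explicit Szeg\H{o} constant from \Cref{sub:finite_laurent} gives directly
\begin{equation*}
\partial_{t_r}\log\Det_H(I-K_{\mathbf t})=-2r\,t_r.
\end{equation*}
No Fredholm calculus is needed, since the closed form $Z_{\mathbf t}=\exp(\sum_r r t_r^2)$ is available. As a consistency check, one could instead use $\partial\log\Det(I-K)=-\operatorname{tr}((I-K)^{-1}\partial K)$ together with \Cref{prop:tr_flow_HK}, but I will not rely on that.

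For the second summand, $Y(\mathbf t)$ is an $N\times N$ matrix. Its differentiability in $\mathbf t$ follows from three facts. First, $R(\mathbf t)=\Theta\exp(\sum_s t_sS^s)$ and $C(\mathbf t)=\exp(\sum_s t_s(S^*)^s)\Xi P_N$ are norm-differentiable. Second, $K_{\mathbf t}$ is differentiable in trace norm by \Cref{lem:finite_laurent_regularity}. Third, $Q(\mathbf t)=(I-K_{\mathbf t})^{-1}$ is then norm-differentiable, with $\partial_{t_r}Q=Q(\partial_{t_r}K)Q$. Jacobi's formula then gives $\partial_{t_r}\log\det Y=\operatorname{tr}(Y^{-1}\partial_{t_r}Y)$, and $\partial_{t_r}Y$ is the expression computed in \Cref{prop:tr_flow_Y}, although its explicit form is not needed here. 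Adding the two summands yields the claim.

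The only point requiring care is the legitimacy of the identity $\mathcal T=\Det(I-K)\det Y$ along the whole open set, together with the smoothness of each factor. This is handled by the standing hypothesis that $\Gamma(\mathbf t)$ is invertible, which is exactly what \Cref{thm:finite_GBO} needs at each $\mathbf t$, and by the regularity statements of \Cref{lem:finite_laurent_regularity}. I expect no real obstacle beyond this bookkeeping.
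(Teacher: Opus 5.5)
Your proposal is correct and follows the paper's proof: you split $\log\mathcal T$ via \eqref{eq:fredholm_tau_jacobi} into $\log\Det_H(I-K_{\mathbf t})=-\sum_s s\,t_s^2$ and $\log\det Y(\mathbf t)$, then apply Jacobi's formula to the finite matrix $Y$. Your remarks on the norm-differentiability of $R$, $C$, $Q$ and on the nonvanishing of both factors spell out what the paper leaves implicit.
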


\begin{proof}
By \eqref{eq:fredholm_tau_jacobi},
$\log \mathcal T(\mathbf t)=\log\Det_{H}(I-K_{\mathbf t})+
\log\Det_{\mathbb C^{N}}(Y(\mathbf t))$ on this open set. The first summand is
$-\sum_{s}s\,t_{s}^{2}$, whose $t_{r}$-derivative is $-2r\,t_{r}$. The second
yields $\operatorname{tr}(Y(\mathbf t)^{-1}\,\partial_{t_{r}}Y(\mathbf t))$ 
in the same way as in \Cref{cor:rectangular_Y_flow}.
\end{proof}

\begin{remark}\label{rmk:classical_bessel_specialization}
For $M=1$, \eqref{eq:Laurent_symbol_finite_times} is the Bessel symbol
$\varphi_{t}(z)=e^{t(z+z^{-1})}$. The pure-shift tilts produce determinants of the form
\[
\Det\bigl[I_{\nu+i-j}(2t)\bigr]_{i,j=0}^{n-1}.
\]
Up to elementary normalizations
and indexing conventions, such determinants are known to form
Painlev\'e~III$'$ tau-function sequences; see Forrester--Witte
\cite{ForresterWitte2004} and, for broader tau-function background,
\cite{ForresterWitte2002}. For recent asymptotic work, see
\cite{ChenXuZhao2024}. The associated OPUC reflection coefficients  
and related finite Laurent Toeplitz determinants satisfy discrete Painlev\'e-type
recurrences; see also \cite{ChouteauTarricone2023}.
\end{remark}

\subsection{Finite-dimensional closure problem}
\label{sub:schlesinger}

For finite Laurent exponential symbols and polynomial tilts of bandwidth at most
$d$, \Cref{sub:universal_resolvent,sub:finite_laurent} give the first
$t_r$-derivatives of the finite blocks in terms of the following finite list of
resolvent matrix elements:
\begin{equation}
\label{eq:resolvent_variables}
\begin{gathered}
Y_{\varphi}^{m,n},\quad R_m(S^*)^rQC_n,\quad R_mQS^rC_n,\\
R_{m}\,Q\,e_{a},\quad R_{m}\,Q\,h_{a}^{(r)},\quad e_{a}^{\top}\,Q\,C_{n},\quad
(h_{a}^{(r)})^{\top}\,Q\,C_{n},
\end{gathered}
\qquad (1\le r\le M,\,0\le a\le r-1).
\end{equation}
The shifted blocks $R_m(S^*)^rQC_n$ and $R_mQS^rC_n$ are part of the data in
\eqref{eq:Y_rect_prime_finite_laurent}; they are not, in general, entries of
the original block $Y_{\varphi}^{m,n}$. 
Thus, \eqref{eq:resolvent_variables} is 
not yet a closed finite-dimensional
system.

\begin{conjecture}\label{conj:schlesinger_closure}
Assume that $\varphi$ admits a nonvanishing analytic extension to an annulus
containing $\mathbb T$, has zero winding number on $\mathbb T$, and, for one
branch of $\log\varphi$ on this annulus, the logarithmic derivative extends to
a rational function,
\begin{equation*}
z\,\partial_{z}\log\varphi(z)\in\mathbb C(z).
\end{equation*}
Assume also that the tilts $(\xi_j,\theta_i)$ are finite polynomial or rational
functions, with the corresponding chart nondegenerate on the parameter domain under
consideration. Then, after adjoining finitely many auxiliary shifted-boundary
or residue variables to the variables in
\eqref{eq:resolvent_variables}, the associated deformation equations for
$\mathcal T_{R,C}$ should admit a finite-dimensional IIKS/Riemann--Hilbert
realization. In such a realization $\mathcal T_{R,C}$, up to explicit scalar
factors, should coincide with the corresponding isomonodromic tau function.
\end{conjecture}

\begin{remark}\label{rmk:closure_not_linear}
A finite closure, if present, cannot simply come from
minors of the original block alone. At a minimum, it should also use
Pl\"ucker relations among minors of enlarged blocks, as well as rational
functions of the deformation parameters.

Numerical experiments point in the same direction.  In the test case of
polynomial tilts with matrix size $N=3$ and degree bound $d=2$, differentiating
the minors of the original finite block produced quantities which could not be
written as constant linear combinations of the same minors.  After adding the
nearest boundary-shifted minors, the number of numerically independent sampled
quantities increased from $12$ to $15$, that is, by three. A separate check on
the corresponding resolvent quantities showed the same numerical increase,
again from $12$ to $15$.  Thus the expected finite closure, if it exists, should
allow coefficients depending on the times, at least rationally, rather than only
a fixed finite-dimensional span with constant coefficients.
\end{remark}

\appendix

\section{On spiked soft edge asymptotics of tilted Fredholm formulas}
\label[appendix]{sec:appendix_spiked}

Let us illustrate how the tilted Fredholm
determinant on the canonical tail space
$Q_NH=\overline{\operatorname{span}}\{e_N,e_{N+1},\ldots\}$
(\Cref{prop:fixed_tail_tilted}) can be used in soft edge asymptotic
computations.  
We work out a rank one ``spiked'' example in the sense of
Baik--Ben Arous--P\'ech\'e \cite{BBP2005phase}, 
presenting only asymptotic computations without rigorous convergence estimates.

\subsection{}

Consider the finite Laurent symbol
\begin{equation*}
    \varphi_L(z)=\exp L\bigl(a(z+z^{-1})+b(z^2+z^{-2})\bigr),
    \qquad a>0,\quad 0<b<a/8 .
\end{equation*}
Then $G(\varphi_L)=1$ and the strong Szeg\H{o} constant is
\begin{equation*}
    Z_L=\exp L^2(a^2+2b^2).
\end{equation*}
For the ordinary Toeplitz determinant, the BOGC identity gives
\begin{equation*}
    \frac{D_N(\varphi_L)}{Z_L}
    =\Det_{Q_NH}\bigl(I_{Q_NH}-Q_NK_LQ_N\bigr).
\end{equation*}
Set
\begin{equation*}
    \chi=2a-4b,\qquad c=(a-8b)^{1/3},\qquad
    N_L(s)=\bigl\lfloor \chi L+cL^{1/3}s\bigr\rfloor .
\end{equation*}
To see the soft edge scale, look at the Fourier coefficients of
\begin{equation}\label{eq:spiked_symbol_ratio}
    b_L(z)=\frac{\varphi_{L,-}(z)}{\varphi_{L,+}(z)}
    =\exp L\bigl(a(z^{-1}-z)+b(z^{-2}-z^2)\bigr).
\end{equation}
They are given by
\begin{equation*}
    b_{L,n}=\oint b_L(z)z^{-n}\frac{dz}{2\pi iz}.
\end{equation*}
The saddle point corresponding to $n\approx\chi L$ is $z=-1$.  Writing
$z=-e^\lambda$, and ignoring the harmless sign $(-1)^n$, the exponent in
\eqref{eq:spiked_symbol_ratio} with the Fourier factor $z^{-n}$ becomes
\begin{equation*}
    L\bigl(2a\sinh\lambda-2b\sinh(2\lambda)\bigr)-n\lambda .
\end{equation*}
At $n=\chi L$ the linear term cancels, and
\begin{equation*}
    2a\sinh\lambda-2b\sinh(2\lambda)-\chi\lambda
    =\frac{a-8b}{3}\lambda^3+O(\lambda^5).
\end{equation*}
Thus the Airy scale is $N=\chi L+O(L^{1/3})$.  At the level of Fourier
coefficients, the standard steepest descent at $z=-1$ gives
\begin{equation*}
    (-1)^n cL^{1/3}b_{L,n}\longrightarrow \operatorname{Ai}(x),
    \qquad n=\chi L+cL^{1/3}x+O(1).
\end{equation*}
Consequently, after conjugation by $(-1)^i$ and the usual Riemann-sum
identification of $Q_{N_L(s)}H$ with $L^2(s,\infty)$, one expects
$Q_{N_L(s)}K_LQ_{N_L(s)}$ to converge to the Airy kernel
$K_{\operatorname{Ai}}$ on $L^2(s,\infty)$.

\subsection{}

Consider now the tilted situation, with the tilts depending on $L$. Assume that the hypotheses of
\Cref{prop:fixed_tail_tilted} hold for $N=N_L(s)$.  Let
$\mathsf K_L^{\xi,\theta}$ be the fixed-tail kernel on $Q_NH$ from
\Cref{prop:fixed_tail_tilted}.  By \eqref{eq:fixed_tail_kernel_expansion},
it is equal to the BOGC kernel $Q_NK_LQ_N$ plus a finite-rank correction.

After the same sign conjugation and Riemann-sum identification as above,
suppose that
\begin{equation*}
    Q_NK_LQ_N\longrightarrow K_{\operatorname{Ai}},
    \qquad
    \mathsf K_L^{\xi,\theta}-Q_NK_LQ_N\longrightarrow F_\infty
\end{equation*}
in trace norm on $L^2(s,\infty)$, where $F_\infty$ is finite rank and
$\Gamma_L$ is the corresponding Gram matrix.  Then continuity of the Fredholm
determinant gives
\begin{equation*}
    \frac{D_N^{\xi,\theta}(\varphi_L)}
    {Z_L\,\det_{\mathbb C^N}(\Gamma_L)}
    \longrightarrow
    \Det_{L^2(s,\infty)}\bigl(I-K_{\operatorname{Ai}}-F_\infty\bigr).
\end{equation*}
The right-hand side is the Fredholm determinant (on $(s,\infty)$)
of the Airy kernel modified by a finite-rank perturbation,
which is a typical form for soft edge limits of spiked models
\cite{BBP2005phase}, \cite{BorodinPeche2009}.

\subsection{}

We now specialize to a single column tilt.\footnote{The
case of an arbitrary finite number $k$ of columns can be handled similarly, 
which would lead to the general $k$-spiked 
Baik--Ben Arous--P\'ech\'e (BBP) distribution
defined in \cite{BBP2005phase}.}
Fix $w>0$ and set
\begin{equation*}
    \alpha_L=-\exp\{-w/(cL^{1/3})\},
    \qquad |\alpha_L|<1.
\end{equation*}
Take
\begin{equation*}
    \xi_j(z)=1\quad (1\le j<N),
    \qquad
    \xi_N(z)=(1-\alpha_Lz)^{-1},
    \qquad
    \theta_i\equiv1 .
\end{equation*}
Set $C_L=T(\varphi_{L,-})\Xi_LP_N$.  In this column-only case
$R=P_NT(\varphi_{L,+})$, so $RQ_N=0$ and
$\operatorname{Ker}(R)=Q_NH$.  The triangular factor
$P_NT(\varphi_{L,+})P_N$ cancels from the Gram matrix, and the fixed-tail
kernel becomes
\begin{equation*}
    \mathsf K_L^{\xi,\mathbf 1}
    =Q_NK_LQ_N-Q_NC_L(P_NC_L)^{-1}P_NK_LQ_N,
\end{equation*}
provided $P_NC_L$ is invertible.  We now compute this correction explicitly.

Write
\begin{equation*}
    \varphi_{L,-}(z)=\sum_{m\ge0}h_mz^{-m},\qquad h_0=1.
\end{equation*}
For $0\le j\le N-2$,
\begin{equation*}
    C_Le_j=T(\varphi_{L,-})e_j=\sum_{m=0}^{j}h_m e_{j-m},
\end{equation*}
so these columns have no $Q_N$-tail.  For the last column,
\begin{equation*}
    \Xi_Le_{N-1}=\sum_{n\ge0}\alpha_L^n e_{N-1+n},
\end{equation*}
and hence
\begin{equation}\label{eq:spiked_column_coefficients}
    (C_Le_{N-1})_i
    =\sum_{\substack{m\ge0\\ i+m\ge N-1}}
    h_m\alpha_L^{i+m-N+1}.
\end{equation}
For $i=N+r$, $r\ge0$, the lower constraint on $m$ is automatic, so
\begin{equation*}
    Q_NC_Le_{N-1}
    =\left(\sum_{m\ge0}h_m\alpha_L^m\right)
    \sum_{r\ge0}\alpha_L^{r+1}e_{N+r}.
\end{equation*}
At the boundary row $i=N-1$,
\begin{equation*}
    (C_Le_{N-1})_{N-1}=\sum_{m\ge0}h_m\alpha_L^m.
\end{equation*}
The first $N-1$ columns have no $e_{N-1}$ component.  Therefore $P_NC_L$ is
block upper triangular with a unit upper-triangular upper-left block and
lower-right entry $\sum_{m\ge0}h_m\alpha_L^m$.  In particular, $P_NC_L$ is
invertible, and
\begin{equation*}
    Q_NC_L(P_NC_L)^{-1}
    =
    \left(\sum_{r\ge0}\alpha_L^{r+1}e_{N+r}\right)e_{N-1}^{*}.
\end{equation*}
Consequently,
\begin{equation}\label{eq:spiked_column_kernel_vector_form}
    \mathsf K_L^{\xi,\mathbf 1}
    =Q_NK_LQ_N
    -\left(\sum_{r\ge0}\alpha_L^{r+1}e_{N+r}\right)
    \otimes(e_{N-1}^{*}K_LQ_N).
\end{equation}
Equivalently, for $i,j\ge N$,
\begin{equation}\label{eq:spiked_column_kernel}
    \mathsf K_L^{\xi,\mathbf 1}(i,j)
    =K_L(i,j)-\alpha_L^{i-N+1}K_L(N-1,j).
\end{equation}
Thus the one-column tilt gives an exact rank-one correction of the ordinary
BOGC tail kernel.

\subsection{}

Put $N=N_L(s)$ and write
\begin{equation*}
    x=\frac{i-\chi L}{cL^{1/3}},\qquad
    y=\frac{j-\chi L}{cL^{1/3}}.
\end{equation*}
For $i=N+r$,
\begin{equation*}
    \alpha_L^{i-N+1}
    =(-1)^{r+1}\exp\{-w(r+1)/(cL^{1/3})\}.
\end{equation*}
After the same sign conjugation used for the ordinary Airy limit, this vector
factor tends to $\exp\{-w(x-s)\}$.

For the present symmetric symbol, $\widetilde c_L=b_L$, and therefore
\begin{equation*}
    K_L(i,j)=\sum_{\ell\ge0}b_{L,i+\ell+1}b_{L,j+\ell+1},
    \qquad
    K_L(N-1,j)=\sum_{\ell\ge0}b_{L,N+\ell}b_{L,j+\ell+1}.
\end{equation*}
Inserting the coefficient Airy asymptotics gives the Riemann-sum limit
\begin{equation*}
    cL^{1/3}(-1)^{N-1+j}K_L(N-1,j)
    \longrightarrow
    \int_0^\infty
    \operatorname{Ai}(s+t)\operatorname{Ai}(y+t)\,dt
    =K_{\operatorname{Ai}}(s,y).
\end{equation*}
Hence the rank-one term in \eqref{eq:spiked_column_kernel} has candidate
Airy-scale limit
\begin{equation*}
    cL^{1/3}(-1)^{i+j}\alpha_L^{i-N+1}K_L(N-1,j)
    \longrightarrow e^{-w(x-s)}K_{\operatorname{Ai}}(s,y).
\end{equation*}
The limiting fixed-tail kernel is therefore expected to be the boundary form
\begin{equation}
\label{eq:spiked_column_airy_limit_boundary_form}
    K_{\operatorname{Ai}}(x,y)-E_w(x)K_{\operatorname{Ai}}(s,y),
    \qquad
    E_w(x)=e^{-w(x-s)},\qquad x,y>s.
\end{equation}

The boundary form is not the standard one-spike contour kernel at
the operator level.  The next \Cref{prop:boundary_bbp_pushthrough}
shows that, nevertheless, its
Fredholm determinant 
on $(s,\infty)$ coincides with the one-spike BBP distribution.

First, we need some notation.
Fix $s\in\mathbb R$ and $w>0$.  Set
\begin{equation}\label{eq:Phi_w_contour}
    \Phi_w(x)=\int_0^\infty e^{-wt}\operatorname{Ai}(x+t)\,dt
    =\frac{1}{2\pi i}\int_{\Gamma}
    \frac{\exp\{Z^3/3-xZ\}}{Z-a}\,dZ,
    \qquad a=-w,
\end{equation}
where $\Gamma$ is an Airy contour passing to the right of $a$.  Let $\Sigma$
be the dual Airy contour chosen so that
\begin{equation*}
    \operatorname{Ai}(y)=\frac{1}{2\pi i}\int_{\Sigma}
    \exp\{-W^3/3+yW\}\,dW
\end{equation*}
and
\begin{equation*}
    K_{\operatorname{Ai}}(x,y)
    =\frac{1}{(2\pi i)^2}
    \int_{\Gamma}dZ\int_{\Sigma}dW\,
    \frac{\exp\{Z^3/3-xZ\}}{\exp\{W^3/3-yW\}}\,
    \frac{1}{Z-W}.
\end{equation*}
Finally, let
\begin{equation*}
    \mathcal A\colon L^2(0,\infty)\to L^2(s,\infty),
    \qquad
    (\mathcal Af)(x)=\int_0^\infty \operatorname{Ai}(x+t)f(t)\,dt .
\end{equation*}
Thus $K_{\operatorname{Ai}}=\mathcal A\mathcal A^*$ and
$K_{\operatorname{Ai}}(s,\cdot)=\mathcal A(\operatorname{Ai}(s+\cdot))$.
We write $f\otimes g$ for the rank-one kernel $f(x)g(y)$.

\begin{proposition}
\label{prop:boundary_bbp_pushthrough}
With the notation above,
\begin{equation}\label{eq:boundary_bbp_det_equivalence}
\Det_{L^2(s,\infty)}
\bigl(I-K_{\operatorname{Ai}}+E_w\otimes K_{\operatorname{Ai}}(s,\cdot)\bigr)
=
\Det_{L^2(s,\infty)}
\bigl(I-K_{\operatorname{Ai}}+\Phi_w\otimes\operatorname{Ai}\bigr),
\end{equation}
and
\begin{equation}\label{eq:one_pole_bbp_ratio_form}
\begin{aligned}
&K_{\operatorname{Ai}}(x,y)-\Phi_w(x)\operatorname{Ai}(y)
\\
&\qquad =
\frac{1}{(2\pi i)^2}
\int_{\Gamma} dZ\int_{\Sigma} dW\,
\frac{\exp\{Z^3/3-xZ\}}{\exp\{W^3/3-yW\}}\,
\frac{1}{Z-W}\,
\frac{W-a}{Z-a},
\end{aligned}
\end{equation}
which is the standard one-spike BBP kernel from
\cite{BBP2005phase}.
\end{proposition}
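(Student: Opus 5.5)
The plan has two independent parts: a push-through (Sylvester) argument for the determinant identity \eqref{eq:boundary_bbp_det_equivalence}, and a partial-fraction computation for the contour formula \eqref{eq:one_pole_bbp_ratio_form}. Everything is real-kernel, so $\mathcal A^*$ is the transpose integral operator $(\mathcal A^*h)(t)=\int_s^\infty\operatorname{Ai}(x+t)h(x)\,dx$, from $L^2(s,\infty)$ to $L^2(0,\infty)$.

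\emph{Step 1 (rewriting the boundary kernel).} Set $g(t)=\operatorname{Ai}(s+t)\in L^2(0,\infty)$. Since $K_{\operatorname{Ai}}(s,\cdot)=\mathcal A g$, the rank-one term acts as
\[
(E_w\otimes \mathcal A g)h=E_w\,\langle \mathcal A g,h\rangle=E_w\,\langle g,\mathcal A^*h\rangle .
\]
Hence the left operator factors as
\[
I-K_{\operatorname{Ai}}+E_w\otimes K_{\operatorname{Ai}}(s,\cdot)=I-(\mathcal A-E_w\otimes g)\,\mathcal A^* .
\]
Both factors are Hilbert--Schmidt: $\mathcal A$ has kernel $\operatorname{Ai}(x+t)$, square integrable on $(s,\infty)\times(0,\infty)$, and the rank-one piece is harmless. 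So Sylvester's identity $\Det(I-XY)=\Det(I-YX)$ applies and moves the determinant to $L^2(0,\infty)$:
\[
\Det_{L^2(0,\infty)}\bigl(I-\mathcal A^*\mathcal A+(\mathcal A^*E_w)\otimes g\bigr).
\]

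\emph{Step 2 (computing and shifting).} Next I compute the two pieces. First,
\[
(\mathcal A^*E_w)(t)=\int_s^\infty \operatorname{Ai}(x+t)\,e^{-w(x-s)}\,dx=\Phi_w(s+t),
\]
after the substitution $x=s+u$ in the definition of $\Phi_w$. Second, $\mathcal A^*\mathcal A$ has kernel $\int_s^\infty\operatorname{Ai}(x+t)\operatorname{Ai}(x+u)\,dx=K_{\operatorname{Ai}}(s+t,s+u)$, using the integral form of the Airy kernel. The unitary shift $t\mapsto s+t$ from $L^2(0,\infty)$ to $L^2(s,\infty)$ then turns the operator into $I-K_{\operatorname{Ai}}+\Phi_w\otimes\operatorname{Ai}$ on $L^2(s,\infty)$. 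Fredholm determinants are invariant under unitary conjugation, which proves \eqref{eq:boundary_bbp_det_equivalence}. This route needs no invertibility of $I-K_{\operatorname{Ai}}$, which is why I prefer it to the matrix determinant lemma.

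\emph{Step 3 (contour formula).} I first justify the contour representation \eqref{eq:Phi_w_contour}. Insert $\operatorname{Ai}(x+t)=\frac1{2\pi i}\int_\Gamma e^{Z^3/3-(x+t)Z}\,dZ$ and integrate in $t$ first, using
\[
\int_0^\infty e^{-t(w+Z)}\,dt=\frac1{Z-a},
\]
which is valid because $\Gamma$ passes to the right of $a=-w$, so $\operatorname{Re}(Z+w)>0$. Fubini is justified by the cubic decay along the Airy contour. Then I use the partial fraction
\[
\frac{W-a}{Z-a}=1-\frac{Z-W}{Z-a}.
\]
The term $1$ reproduces the double contour formula for $K_{\operatorname{Ai}}$. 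In the second term the factor $1/(Z-W)$ cancels, so the double integral splits into a product of two single integrals. These are $\Phi_w(x)$ via \eqref{eq:Phi_w_contour} and $\operatorname{Ai}(y)$ via the $\Sigma$-representation, which yields \eqref{eq:one_pole_bbp_ratio_form}.

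\emph{Main obstacle.} The only point needing care is Step 1, where it is easy to mislabel which side $\mathcal A$ versus $\mathcal A^*$ acts on in the rank-one term. I will verify the factorization on kernels directly: $\bigl[(\mathcal A-E_w\otimes g)\mathcal A^*\bigr](x,y)=K_{\operatorname{Ai}}(x,y)-E_w(x)\int_0^\infty g(t)\operatorname{Ai}(y+t)\,dt$, and the last integral equals $K_{\operatorname{Ai}}(s,y)$. The trace-class hypotheses for Sylvester's identity are routine.
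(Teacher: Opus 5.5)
Your proposal is correct and follows the paper's proof essentially step for step. The shared steps are the factorization $K_{\operatorname{Ai}}-E_w\otimes K_{\operatorname{Ai}}(s,\cdot)=(\mathcal A-E_w\otimes\operatorname{Ai}(s+\cdot))\mathcal A^*$, Sylvester's identity to pass to $L^2(0,\infty)$, the computation $\mathcal A^*E_w=\Phi_w(s+\cdot)$ followed by translation, and the partial fraction $\frac{1}{Z-W}\frac{W-a}{Z-a}=\frac{1}{Z-W}-\frac{1}{Z-a}$. Your extra checks fill in details the paper leaves implicit: that $\mathcal A$ is Hilbert--Schmidt, that $\mathcal A^*\mathcal A$ is the shifted Airy kernel, and the derivation of the contour form of $\Phi_w$ (for which you should simply take $\Gamma$ with $\operatorname{Re}Z>-w$ throughout).
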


\begin{proof}
The factorization identities above give
\[
    K_{\operatorname{Ai}}-E_w\otimes K_{\operatorname{Ai}}(s,\cdot)
    =
    \bigl(\mathcal A-E_w\otimes\operatorname{Ai}(s+\cdot)\bigr)\mathcal A^* .
\]
By Sylvester's identity $\Det(I-BC)=\Det(I-CB)$,
\[
\begin{aligned}
&\Det_{L^2(s,\infty)}
\bigl(I-K_{\operatorname{Ai}}+E_w\otimes K_{\operatorname{Ai}}(s,\cdot)\bigr)
\\
&\qquad =
\Det_{L^2(0,\infty)}
\Bigl(I-\mathcal A^*\mathcal A
+\mathcal A^*E_w\otimes\operatorname{Ai}(s+\cdot)\Bigr).
\end{aligned}
\]
Moreover,
\[
    \mathcal A^*E_w(t)
    =\int_s^\infty \operatorname{Ai}(x+t)e^{-w(x-s)}\,dx
    =\int_0^\infty e^{-wr}\operatorname{Ai}(s+t+r)\,dr
    =\Phi_w(s+t).
\]
After the translation $L^2(0,\infty)\simeq L^2(s,\infty)$, this proves
\eqref{eq:boundary_bbp_det_equivalence}.

For \eqref{eq:one_pole_bbp_ratio_form}, use the elementary identity
\[
    \frac{1}{Z-W}\frac{W-a}{Z-a}
    =
    \frac{1}{Z-W}-\frac{1}{Z-a}.
\]
Substituting it into the double contour integral gives the Airy kernel minus
$\Phi_w(x)\operatorname{Ai}(y)$, using \eqref{eq:Phi_w_contour} and the dual
Airy representation above.
\end{proof}

Kernels similar to our initial expression \eqref{eq:spiked_column_airy_limit_boundary_form} 
appear in asymptotic analysis of the polynuclear growth model with external sources
\cite{baik2000limiting_BR_distribution}, \cite{imamura2004fluctuations}.

\newcommand{\etalchar}[1]{$^{#1}$}


\begin{thebibliography}{HJK{\etalchar{+}}24}

\bibitem[BBP05]{BBP2005phase}
J.~Baik, G.~{Ben Arous}, and S.~P\'ech\'e.
\newblock Phase transition of the largest eigenvalue for non-null complex
  sample covariance matrices.
\newblock {\em {Ann. Probab.}}, 33(5):1643--1697, 2005.
\newblock arXiv:math/0403022 [math.PR].

\bibitem[BD02]{BumpDiaconis2002}
D.~Bump and P.~Diaconis.
\newblock {{Toeplitz} minors}.
\newblock {\em J. Combin. Theory Ser. A}, 97(2):252--271, 2002.

\bibitem[BLL26]{BaikLiaoLiu2026}
J.~Baik, Y.~Liao, and Z.~Liu.
\newblock {Periodic {KPZ} fixed point with general initial conditions}.
\newblock {\em arXiv preprint}, 2026.
\newblock arXiv:2603.01964 [math.PR].

\bibitem[BO00]{BorodinOkounkov2000}
A.~Borodin and A.~Okounkov.
\newblock {A {Fredholm} determinant formula for {Toeplitz} determinants}.
\newblock {\em Integral Equations Operator Theory}, 37(4):386--396, 2000.
\newblock arXiv:math/9907165.

\bibitem[BO16]{borodin2016representations}
A.~Borodin and G.~Olshanski.
\newblock {\em {Representations of the Infinite Symmetric Group}}, volume 160
  of {\em Cambridge Studies in Advanced Mathematics}.
\newblock Cambridge University Press, 2016.

\bibitem[Bor98]{Borodin1998b}
A.~Borodin.
\newblock Biorthogonal ensembles.
\newblock {\em Nuclear Physics B}, 536:704--732, 1998.
\newblock arXiv:math/9804027 [math.CA].

\bibitem[B{\"o}t01]{Bottcher2001}
A.~B{\"o}ttcher.
\newblock {One more proof of the {Borodin}--{Okounkov} formula for {Toeplitz}
  determinants}.
\newblock {\em Integral Equations Operator Theory}, 41(1):123--125, 2001.
\newblock arXiv:math/0012200.

\bibitem[BP08]{BorodinPeche2009}
A.~Borodin and S.~Peche.
\newblock Airy kernel with two sets of parameters in directed percolation and
  random matrix theory.
\newblock {\em Jour. Stat. Phys.}, 132(2):275--290, 2008.
\newblock arXiv:0712.1086v3 [math-ph].

\bibitem[BR00]{baik2000limiting_BR_distribution}
J.~Baik and E.~Rains.
\newblock Limiting distributions for a polynuclear growth model with external
  sources.
\newblock {\em Jour. Stat. Phys.}, 100(3):523--541, 2000.
\newblock arXiv:math/0003130 [math.PR].

\bibitem[BS06]{BottcherSilbermann}
A.~B\"ottcher and B.~Silbermann.
\newblock {\em {Analysis of {T}oeplitz Operators}}.
\newblock Springer-Verlag, Berlin, second edition, 2006.

\bibitem[Buc02]{buch2002littlewood}
A.~S. Buch.
\newblock {A Littlewood--Richardson rule for the K-theory of Grassmannians}.
\newblock {\em Acta Math.}, 189(1):37--78, 2002.
\newblock arXiv:math/0004137 [math.AG].

\bibitem[BW00]{BasorWidom2000}
E.~L. Basor and H.~Widom.
\newblock {On a {Toeplitz} determinant identity of {Borodin} and {Okounkov}}.
\newblock {\em Integral Equations Operator Theory}, 37(4):397--401, 2000.
\newblock arXiv:math/9909010.

\bibitem[BW06]{BottcherWidom2006}
A.~B{\"o}ttcher and H.~Widom.
\newblock {Szeg{\H{o}} via {Jacobi}}.
\newblock {\em Linear Algebra Appl.}, 419(2--3):656--667, 2006.
\newblock arXiv:math/0604009.

\bibitem[CT23]{ChouteauTarricone2023}
T.~Chouteau and S.~Tarricone.
\newblock {Recursion relation for {Toeplitz} determinants and the discrete
  {Painlev\'e} {II} hierarchy}.
\newblock {\em SIGMA}, 19:030, 2023.
\newblock arXiv:2211.16898 [math-ph].

\bibitem[CW15]{CafassoWu2015}
M.~Cafasso and C.-Z. Wu.
\newblock {Tau functions and the limit of block {Toeplitz} determinants}.
\newblock {\em Int. Math. Res. Not.}, 2015(20):10339--10366, 2015.
\newblock arXiv:1404.5149 [math.AG].

\bibitem[CXZ24]{ChenXuZhao2024}
Y.~Chen, S.-X. Xu, and Y.-Q. Zhao.
\newblock {Asymptotics of the determinant of the modified {Bessel} functions
  and the second {Painlev\'e} equation}.
\newblock {\em Random Matrices Theory Appl.}, 13(1):2450003, 2024.
\newblock arXiv:2402.11233 [math-ph].

\bibitem[FK94]{fomin1994grothendieck}
S.~Fomin and A.N. Kirillov.
\newblock {Grothendieck polynomials and the Yang-Baxter equation}.
\newblock In {\em Proc. Formal Power Series and Alg. Comb}, pages 183--190,
  1994.

\bibitem[FW02]{ForresterWitte2002}
P.~J. Forrester and N.~S. Witte.
\newblock {Application of the {$\tau$}-function theory of {Painlev\'e}
  equations to random matrices: {$P_{\mathrm{V}}$}, {$P_{\mathrm{III}}$}, the
  {LUE}, {JUE} and {CUE}}.
\newblock {\em Comm. Pure Appl. Math.}, 55(6):679--727, 2002.
\newblock arXiv:math-ph/0201051.

\bibitem[FW04]{ForresterWitte2004}
P.~J. Forrester and N.~S. Witte.
\newblock {Discrete {Painlev\'e} equations, orthogonal polynomials on the unit
  circle, and {$N$}-recurrences for averages over {$U(N)$}:
  {$P_{\mathrm{III}'}$} and {$P_{\mathrm{V}}$} {$\tau$}-functions}.
\newblock {\em Int. Math. Res. Not.}, (4):160--183, 2004.
\newblock arXiv:math-ph/0305029.

\bibitem[GC79]{GeronimoCase1979}
J.~S. Geronimo and K.~M. Case.
\newblock {Scattering theory and polynomials orthogonal on the unit circle}.
\newblock {\em J. Math. Phys.}, 20:299--310, 1979.

\bibitem[Ges90]{gessel1990symmetric}
Ira~M. Gessel.
\newblock Symmetric functions and {P}-recursiveness.
\newblock {\em J. Combin. Theory Ser. A}, 53(2):257--285, 1990.

\bibitem[GGT20]{GarciaTierzGarcia2020}
D.~Garc{\'\i}a-Garc{\'\i}a and M.~Tierz.
\newblock {{Toeplitz} minors and specializations of skew {Schur} polynomials}.
\newblock {\em J. Combin. Theory Ser. A}, 172, 2020.
\newblock arXiv:1706.02574 [math.CO].

\bibitem[GP24]{GavrilovaPetrov2023_Groth}
S.~Gavrilova and L.~Petrov.
\newblock {Tilted biorthogonal ensembles, Grothendieck random partitions, and
  determinantal tests}.
\newblock {\em Selecta Math.}, 30:Article 56, 2024.
\newblock arXiv:2305.17747 [math.PR].

\bibitem[HJK{\etalchar{+}}24]{hwang2021refined}
B.-H. Hwang, J.~Jang, J.S. Kim, M.~Song, and U.-K. Song.
\newblock {Refined canonical stable Grothendieck polynomials and their duals,
  Part I}.
\newblock {\em Adv. Math.}, 446:109670, 2024.
\newblock arXiv:2104.04251 [math.CO].

\bibitem[IIKS90]{its1990differential}
A.R. Its, A.G. Izergin, V.E. Korepin, and N.A. Slavnov.
\newblock {Differential equations for quantum correlation functions}.
\newblock {\em Int. J. Mod. Phys. B}, 4(5):1003--1037, 1990.

\bibitem[IS04]{imamura2004fluctuations}
T.~Imamura and T.~Sasamoto.
\newblock Fluctuations of the one-dimensional polynuclear growth model with
  external sources.
\newblock {\em Nuclear Physics B}, 699(3):503--544, 2004.
\newblock arXiv:math-ph/0406001.

\bibitem[Koz14]{Kozlowski2013}
K.~K. Kozlowski.
\newblock {On lacunary {Toeplitz} determinants}.
\newblock {\em Asymptotic Analysis}, 88(1-2):1--16, 2014.
\newblock arXiv:1310.2584 [math-ph].

\bibitem[LS82]{lascoux1982structure}
A.~Lascoux and M.-P. Sch\"utzenberger.
\newblock {Structure de Hopf de l'anneau de cohomologie et de l'anneau de
  Grothendieck d'une vari\'et\'e de drapeaux}.
\newblock {\em C. R. Acad. Sci. Paris S\'er. I Math.}, 295(11):629--633, 1982.

\bibitem[LT26]{LiuTripathi2026}
Z.~Liu and T.~Tripathi.
\newblock {A determinant identity for the sum of contour integral matrices}.
\newblock {\em arXiv preprint}, 2026.
\newblock arXiv:2604.24747 [math.PR].

\bibitem[Mac95]{Macdonald1995}
I.G. Macdonald.
\newblock {\em Symmetric functions and {H}all polynomials}.
\newblock Oxford University Press, 2nd edition, 1995.

\bibitem[MMS17]{MaximenkoMoctezuma2017}
E.~A. Maximenko and M.~A. Moctezuma-Salazar.
\newblock {Cofactors and eigenvectors of banded {Toeplitz} matrices: {Trench}
  formulas via skew {Schur} polynomials}.
\newblock {\em Oper. Matrices}, 11(4), 2017.
\newblock arXiv:1705.08067 [math.CO].

\bibitem[MS13]{Motegi-Sakai13}
K.~Motegi and K.~Sakai.
\newblock {Vertex models, {TASEP} and {G}rothendieck polynomials}.
\newblock {\em J. Phys. A: Math. Theor.}, 46(35):355201, 2013.
\newblock arXiv:1305.3030 [math-ph].

\bibitem[Ons44]{Onsager1944Ising}
L.~Onsager.
\newblock {Crystal statistics. {I}. {A} two-dimensional model with an
  order-disorder transition}.
\newblock {\em Phys. Rev.}, 65:117--149, 1944.

\bibitem[Pel03]{Peller-Hankel}
V.~V. Peller.
\newblock {\em {Hankel Operators and Their Applications}}.
\newblock Springer Monographs in Mathematics. Springer-Verlag, New York, 2003.

\bibitem[Sim05a]{Simon-OPUC1}
B.~Simon.
\newblock {\em Orthogonal Polynomials on the Unit Circle, Part 1: Classical
  Theory}, volume~54 of {\em American Mathematical Society Colloquium
  Publications}.
\newblock American Mathematical Society, Providence, RI, 2005.

\bibitem[Sim05b]{Simon-trace-ideals}
B.~Simon.
\newblock {\em Trace ideals and their applications, second edition}, volume 120
  of {\em Mathematical Surveys and Monographs}.
\newblock AMS, 2005.

\bibitem[SW85]{SegalWilson1985}
G.~Segal and G.~Wilson.
\newblock {Loop groups and equations of {KdV} type}.
\newblock {\em Inst. Hautes \'Etudes Sci. Publ. Math.}, 61:5--65, 1985.

\bibitem[Sze15]{Szego1915}
G.~Szeg{\H{o}}.
\newblock {Ein {Grenzwertsatz} \"uber die {Toeplitz}schen {Determinanten} einer
  reellen positiven {Funktion}}.
\newblock {\em Math. Ann.}, 76:490--503, 1915.

\bibitem[Sze52]{Szego1952}
G.~Szeg{\H{o}}.
\newblock {On certain {Hermitian} forms associated with the {Fourier} series of
  a positive function}.
\newblock {\em Comm. S\'em. Math. Univ. Lund [Medd. Lunds Univ. Mat. Sem.]},
  pages 228--238, 1952.
\newblock Tome Suppl\'ementaire d\'edi\'e \`a Marcel Riesz.

\bibitem[TW02a]{TW_Airy_Painleve_2002}
C.~Tracy and H.~Widom.
\newblock Airy kernel and {P}ainlev{\'e} {II}.
\newblock In {\em Isomonodromic deformations and applications in physics
  ({M}ontr{\'e}al, {QC}, 2000)}, volume~31 of {\em CRM Proc. Lecture Notes},
  pages 85--96. AMS, 2002.
\newblock arXiv:solv-int/9901004.

\bibitem[TW02b]{TracyWidom2002}
C.~A. Tracy and H.~Widom.
\newblock {On the limit of some {Toeplitz}-like determinants}.
\newblock {\em SIAM J. Matrix Anal. Appl.}, 23(4):1194--1196, 2002.
\newblock arXiv:math/0107118.

\bibitem[Yel17]{yeliussizov2015duality}
D.~Yeliussizov.
\newblock {Duality and deformations of stable Grothendieck polynomials}.
\newblock {\em Jour. Alg. Comb.}, 45(1):295--344, 2017.
\newblock arXiv:1601.01581 [math.CO].

\end{thebibliography}
\end{document}